\numberwithin{equation}{section}
\numberwithin{figure}{section}
\theoremstyle{plain}
\newtheorem{thm}{\protect\theoremname}
\theoremstyle{definition}
\newtheorem{problem}[thm]{\protect\problemname}
\theoremstyle{remark}
\newtheorem{rem}[thm]{\protect\remarkname}
\theoremstyle{plain}
\newtheorem{cor}[thm]{\protect\corollaryname}
\theoremstyle{plain}
\newtheorem{fact}[thm]{\protect\factname}
\theoremstyle{definition}
\newtheorem{defn}[thm]{\protect\definitionname}
\theoremstyle{plain}
\newtheorem{lem}[thm]{\protect\lemmaname}
\theoremstyle{plain}
\newtheorem{prop}[thm]{\protect\propositionname}
\theoremstyle{remark}
\newtheorem*{claim*}{\protect\claimname}
\theoremstyle{remark}
\newtheorem*{summary*}{\protect\summaryname}
\newlist{casenv}{enumerate}{4}
\setlist[casenv]{leftmargin=*,align=left,widest={iiii}}
\setlist[casenv,1]{label={{\itshape\ \casename} \arabic*.},ref=\arabic*}
\setlist[casenv,2]{label={{\itshape\ \casename} \roman*.},ref=\roman*}
\setlist[casenv,3]{label={{\itshape\ \casename\ \alph*.}},ref=\alph*}
\setlist[casenv,4]{label={{\itshape\ \casename} \arabic*.},ref=\arabic*}
\theoremstyle{remark}
\newtheorem*{rem*}{\protect\remarkname}
\theoremstyle{remark}
\newtheorem{claim}[thm]{\protect\claimname}
\date{}
\theoremstyle{definition}
\crefname{problem}{problem}{problems}
\Crefname{problem}{Problem}{Problems}
\crefname{fact}{fact}{facts}
\Crefname{fact}{Fact}{Facts}
\crefname{proposition}{proposition}{propositions}
\Crefname{proposition}{Proposition}{Propositions}
\crefname{construction}{construction}{constructions}
\Crefname{construction}{Construction}{Constructions}
\crefname{claim}{claim}{claims}
\Crefname{claim}{Claim}{Claims}
\newcommand{\Searrow}{\Downarrow}
\providecommand{\casename}{Case}
\providecommand{\claimname}{Claim}
\providecommand{\corollaryname}{Corollary}
\providecommand{\definitionname}{Definition}
\providecommand{\factname}{Fact}
\providecommand{\lemmaname}{Lemma}
\providecommand{\problemname}{Problem}
\providecommand{\propositionname}{Proposition}
\providecommand{\remarkname}{Remark}
\providecommand{\summaryname}{Summary}
\providecommand{\theoremname}{Theorem}
\begin{document}
\title{On graphs, homology bases, and triangulated homology spheres}
\author{Karim Adiprasito\footnote{Sorbonne Université and Université Paris Cité, CNRS, IMJ-PRG, F-75005 Paris, France}, Marc Lackenby\footnote{University of Oxford}, Juan Souto\footnote{CNRS, IRMAR - UMR 6625, Université de Rennes, Campus de Beaulieu, Rennes, 35042, France}, and Geva Yashfe\footnote{Hebrew University of Jerusalem and The University of Chicago}}
\maketitle
\begin{abstract}
We describe a construction that takes as input a graph and a basis
for its first homology, and returns a triangulation of a $3$-dimensional
homology sphere. This makes precise an idea of M. Gromov and A. Nabutovski
described in \cite{gromov_spaces_and_questions}. The immediate application,
essentially described by Gromov, is to translate problems about asymptotics
of homology sphere triangulations to asymptotic counting problems
for constant-degree graphs with ``short'' homology bases. We construct
families of $3$-sphere triangulations with dual graphs that are expanders,
answering a relaxation of a question asked by G. Kalai in \cite{Kalai_f_vector_theory,Kalai_skeletons_and_paths}.
Our results also imply that if the number of $d$-dimensional triangulated
homology spheres with $n$ facets is superexponential in $n$ for
some $d$ then the same holds for $d=3$.
\end{abstract}

\section{Introduction}

\subsection{Motivation and main results}

\label[section]{sec:intro} This paper is about constructing triangulations
of the $3$-sphere and of $3$-dimensional homology spheres. This
is interesting both from an enumerative and from a more structural
point of view. In \cite[p. 33]{gromov_spaces_and_questions}, Mikhail
Gromov posed the following problem:
\begin{problem}
Fix a smooth $d$-manifold $X$ and let $t\left(X,N\right)$ be the
number of combinatorial isomorphism types of triangulations of $X$
with $N$ $d$-simplices. Does $t$ grow at most exponentially in
$N$? \label[problem]{triangulation_problem}
\end{problem}

\begin{rem}
It is known and not difficult to prove that if $d\ge2$ and $X$ is
a PL $d$-manifold then $t(X,N)$ is bounded below by an exponential
function of $N$ for all $d\ge2$, and bounded above by a function
of the form $c^{N\log N}$. This problem has a different character
from the one in which we count triangulations by the number of vertices,
for which there are faster-growing lower bounds (see \cite{Nevo_Santos_Wilson})
\end{rem}

With Alex Nabutovski, Gromov found a combinatorial analogue of \Cref{triangulation_problem}:
\begin{problem}
Evaluate the number $t_{L}\left(N\right)$ of connected $3$-regular
graphs $G$ with $N$ edges, such that cycles of length $\le L$ normally
generate $\pi_{1}\left(G\right)$ (a variation of the problem asks
only that such cycles generate $H_{1}\left(G\right)$). Is $t_{L}\left(N\right)$
at most exponential in $N$? \label[problem]{graph_problem}
\end{problem}

In \cite[p. 33]{gromov_spaces_and_questions}, Gromov states that
the essential part of \Cref{triangulation_problem} is to count triangulations
of manifolds with fixed $\pi_{1}$ or fixed $H_{1}$, and that this
is essentially equivalent to \Cref{graph_problem}. Our first main
theorem formalizes part of this statement.
\begin{thm}
\label[theorem]{thm:triangulations_vs_short_graphs}There exists $L_{0}\in\mathbb{N}$
such that for all $L\ge L_{0}$ the following holds. Let $\mathbb{F}$
be a field and $d\ge3$ an integer. Let $s_{d,\mathbb{F}}\left(N\right)$
be the number of isomorphism types of triangulated combinatorial $d$-manifolds
$M$ with $H_{1}\left(M;\mathbb{F}\right)=0$ and with at most $N$
$d$-simplices. Let $t_{L,\mathbb{F}}\left(N\right)$ be the number
of connected $3$-regular graphs $G$ with at most $N$ vertices such
that cycles of length $\le L$ generate $H_{1}\left(G;\mathbb{F}\right)$.
Then there exists $c>0$ such that
\[
s_{d,\mathbb{F}}(\frac{1}{c}N)\le t_{L,\mathbb{F}}(N)\le s_{d,\mathbb{F}}(cN)
\]
for all $N\in\mathbb{N}$.
\end{thm}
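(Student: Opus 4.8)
The statement asks for two inequalities, and the plan is to prove each of them by producing an injection, up to isomorphism, together with a linear comparison of the size parameters. For the bound $t_{L,\mathbb F}(N)\le s_{d,\mathbb F}(cN)$ I would proceed as follows. Given a connected $3$-regular graph $G$ on at most $N$ vertices in which the cycles of length $\le L$ span $H_1(G;\mathbb F)$, first extract from this spanning set a basis $B$ of $H_1(G;\mathbb F)$ consisting of cycles of length $\le L$, fixing once and for all a rule that selects such a $B$ from $G$ in a canonical way. Then feed the pair $(G,B)$ into the construction advertised in the introduction: this returns a triangulated homology sphere $\Phi(G,B)$, which we may take to be $d$-dimensional (directly, or by the dimension-shift discussed below), so in particular $H_1(\Phi(G,B);\mathbb F)=0$. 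Since $\Phi(G,B)$ is assembled from one bounded-size block per vertex and per edge of $G$ and one block of size $O_L(1)$ per element of $B$, it has at most $c\,|V(G)|\le cN$ facets for a suitable $c=c(d,L)$. The point that really requires an argument is that the isomorphism type of $\Phi(G,B)$ determines $G$ up to isomorphism; this should follow by arranging that $G$ (or a fixed subdivision of it) sits inside $\Phi(G,B)$ as an identifiable subcomplex, a kind of spine, so that an isomorphism of outputs restricts to an isomorphism of inputs. Injectivity of $G\mapsto\Phi(G,B)$ then gives the inequality.

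For the reverse bound $s_{d,\mathbb F}(N/c)\le t_{L,\mathbb F}(N)$, the homological input is the elementary fact that $H_1$ of a $2$-dimensional complex equals the cycle space of its $1$-skeleton modulo the span of the boundaries of its $2$-cells. Applied to the $2$-skeleton of a combinatorial $d$-manifold $M$ with $H_1(M;\mathbb F)=H_1(\mathrm{sk}_2 M;\mathbb F)=0$, this shows that $H_1(\mathrm{sk}_1 M;\mathbb F)$ is already spanned by the triangles of $M$, i.e.\ by cycles of length $3$; so the $1$-skeleton is almost the graph we want, the only defect being that it is not $3$-regular. To repair this I would first pass to a re-triangulation $M'$ of the same PL manifold, with $f_d(M')=O_d(f_d(M))$ facets, in which every vertex link --- hence every vertex degree --- is bounded by a constant (obtainable, in low dimensions, by iterated vertex splits, each adding a bounded number of facets), and then substitute for each vertex $v$ of $M'$ a bounded-size $3$-regular gadget carrying one port per edge of $M'$ at $v$, arranged so that ports belonging to edges which span a common triangle remain within bounded distance inside the gadget. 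The result is a connected $3$-regular graph $G$ on $O_d(f_d(M))$ vertices; since each gadget is contractible apart from the bounded-length cycles it contributes, a direct check shows that $H_1(G;\mathbb F)$ is spanned by cycles of some absolute length $L_0$, coming from the triangles of $M'$ and from the gadgets. Taking $c$ to be the implied constant puts $G$ in range, and the inequality follows once one verifies that $M$ can be read off from $G$: the gadget-clusters recover the vertices of $M'$, the port-edges its edges, the short cycles its triangles and tetrahedra, and the re-triangulation $M\rightsquigarrow M'$ is performed by a fixed, reversible procedure.

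Two issues deserve attention, and one of them is the heart of the matter. The lesser issue is uniformity in $d$: the blocks and gadgets above have $d$-dependent size, whereas the theorem insists on a single $L_0$ good for every $d\ge3$. The natural remedy is to run the construction honestly only for $d=3$, where all local data is bounded absolutely because tetrahedra have four faces, and then to pass between dimensions by a size-linear, injectivity-preserving topological operation relating triangulated $d$-manifolds with vanishing $H_1(\,\cdot\,;\mathbb F)$ to triangulated $3$-manifolds with the same property in both directions --- in the easy direction simply crossing a $3$-manifold with a sphere. The genuinely delicate point is the $3$-regularization in the reverse bound: one must keep a short generating set of cycles --- which forces the preliminary bounded-degree re-triangulation and a careful gadget design, so that triangles of $M'$ do not stretch into long cycles of $G$ --- while keeping the total size linear and the whole assignment injective, the last requiring the re-triangulation and the gadget interconnections to encode the full combinatorial type of $M$ without loss. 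The homological lemma and the forward construction are by comparison soft; the difficulty lies entirely in reconciling ``$3$-regular'', ``short cycles span $H_1$'', ``linearly many vertices'', and ``injective'' at the same time.
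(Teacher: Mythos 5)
Your forward direction ($t_{L,\mathbb F}(N)\le s_{d,\mathbb F}(cN)$) is essentially the paper's route: fill a spanning set of short cycles with disks to get a $2$-complex, thicken to a $(d+1)$-manifold-with-boundary, take the boundary, and recover $G$ as an identifiable spine. One small correction: you do not need a canonical rule for selecting the basis $B$. The injectivity needed here is recoverability of $G$ from the output (the paper marks $G$ inside the thickened complex by adding flags to its edges, so $G$ is the subgraph whose edges lie in many triangles), not canonicality of the filling choices. Demanding canonical choices would in fact be awkward in the presence of graph automorphisms.

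The reverse direction is where you diverge from the paper and where the real gap is. You propose to work with the $1$-skeleton of a bounded-degree re-triangulation $M'$ of $M$ and then replace vertices by $3$-regular gadgets. The paper instead works with the \emph{dual graph} $D(M)$, which is automatically $(d+1)$-regular (no re-triangulation needed) and whose $H_1$ is spanned by the dual $2$-cells (links of codimension-$2$ faces), each of which traverses boundedly many dual vertices. Your approach has two unresolved issues. First, the existence of a bounded-degree re-triangulation of a combinatorial $d$-manifold with only a constant-factor increase in facets is a nontrivial claim; ``iterated vertex splits'' is a plausible sketch but not an argument (you need to control how many splits a high-degree vertex requires and that each split stays combinatorial). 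Second and more fundamentally, the $1$-skeleton does not determine the triangulation: distinct simplicial $d$-manifolds can share a $1$-skeleton, and your gadget substitution, as described, encodes only the graph and its triangles, not the full face poset. The paper avoids this by coloring the dual graph (\Cref{prop:dual_graph_encoding}) with boundedly many colors from which $M$ is reconstructible (the colors record, for each facet and adjacent facet, which vertex of the facet is ``missing'', relative to a fixed vertex ordering), and then encodes that coloring into the graph itself via degree-one pendants. Your phrase ``the short cycles recover its triangles and tetrahedra'' glosses over exactly this reconstruction step, which is where the work lies.

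On uniformity of $L_0$: your idea of running the construction only at $d=3$ and then crossing with spheres is a genuinely different strategy, but it creates new problems --- triangulating a product $M\times S^{d-3}$ from a triangulation of $M$ while preserving linear size and injectivity on isomorphism classes is not free. The paper instead obtains a universal $L_0$ by \Cref{lem:uniformly_short_encoding}: apply the ``fill--thicken--take-boundary--dualize'' pipeline \emph{twice}. After the first pass the output is a $4$-regular dual graph of a triangulated $3$-manifold, and for such a graph the relevant bounds (degree $4$, each edge in exactly $3$ dual $2$-cells) are absolute constants independent of the original class; hence the second pass has universally bounded cycle lengths. This double-application trick is the mechanism that makes $L_0$ independent of everything, and it is the piece most conspicuously absent from your proposal.
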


This is deduced from a general result that constructs triangulated
manifolds from bounded-degree graphs $G$ with a given set of cycles.
We can use the theorem to compare the behavior of the functions $s_{d,\mathbb{F}}$
in different dimensions $d$:
\begin{cor}
If $s_{3,\mathbb{F}}(n)=O(\exp(c\cdot n))$ for some constant $c>0$
then $s_{d,\mathbb{F}}(n)=O(\exp(c'\cdot n))$ for some $c'>0$ and
for all $d\ge4$. In particular, if $s_{d,\mathbb{F}}$ is superexponential
for some $d$ then so is $s_{3,\mathbb{F}}$. Further, if $s_{d,\mathbb{F}}(N)=\Omega(\exp(c\cdot n\log n))$
for some $c>0$ then $s_{3,\mathbb{F}}=\Omega(\exp(c'\cdot n\log n))$
for some $c'>0$.
\end{cor}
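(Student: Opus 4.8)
The plan is to invoke \Cref{thm:triangulations_vs_short_graphs} twice --- once in dimension $3$ and once in the given dimension $d\ge 4$ --- and to exploit the fact that the graph-counting function $t_{L,\mathbb{F}}$ appearing on the right-hand side is the \emph{same} in both cases. Fix any $L\ge L_0$. The theorem applied in dimension $3$ supplies a constant $c_3>0$ with
\[
s_{3,\mathbb{F}}\!\left(\tfrac{1}{c_3}N\right)\le t_{L,\mathbb{F}}(N)\le s_{3,\mathbb{F}}(c_3 N),
\]
and applied in dimension $d$ a constant $c_d>0$ with the analogous inequalities for $s_{d,\mathbb{F}}$. Chaining the two sandwiches through $t_{L,\mathbb{F}}(N)$ and reparametrizing suitably, one obtains, with $C:=c_3 c_d$,
\[
s_{3,\mathbb{F}}(M)\le s_{d,\mathbb{F}}(CM)\qquad\text{and}\qquad s_{d,\mathbb{F}}(M)\le s_{3,\mathbb{F}}(CM)
\]
for every $M$; that is, the two functions coincide up to a linear rescaling of the argument.

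Granting this comparison, the three assertions reduce to bookkeeping. For the first: if $s_{3,\mathbb{F}}(n)=O(\exp(cn))$ then $s_{d,\mathbb{F}}(M)\le s_{3,\mathbb{F}}(CM)=O(\exp(cCM))$, so $c'=cC$ works. The second assertion is the contrapositive of the first --- if $s_{3,\mathbb{F}}$ is not superexponential then neither is $s_{d,\mathbb{F}}$ --- together with the vacuous case $d=3$. For the third: if $s_{d,\mathbb{F}}(n)=\Omega(\exp(cn\log n))$, then $s_{3,\mathbb{F}}(Cn)\ge s_{d,\mathbb{F}}(n)=\Omega(\exp(cn\log n))$, and substituting $m=Cn$ and using $\log(m/C)\ge\tfrac12\log m$ for $m$ large yields $s_{3,\mathbb{F}}(m)=\Omega(\exp(c'm\log m))$ with, say, $c'=c/(2C)$.

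I do not anticipate a genuine obstacle here: all of the substance is already contained in \Cref{thm:triangulations_vs_short_graphs}. The only point needing a moment's attention is that the constant furnished by that theorem may depend on $d$, $\mathbb{F}$, and $L$; since all three are fixed throughout the corollary, this dependence causes no trouble, and one may use a single admissible value of $L$ (any $L\ge L_0$) in both applications, as the statement allows.
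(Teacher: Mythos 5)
Your proposal is correct and matches the paper's intended derivation: the corollary is stated immediately after \Cref{thm:triangulations_vs_short_graphs} with no separate proof, precisely because the argument is exactly the double application and chaining you describe. The bookkeeping (reparametrizing the sandwich inequalities, taking contrapositives, and absorbing the $\log(m/C)$ term) is all in order.
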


Our second main theorem is that, for each $d \ge 3$, there is an infinite family of $d$-sphere
triangulations for which the dual graphs form an expander family.
\begin{thm}
\label[theorem]{thm:expanding_S3s}For each $d \ge 3$ there is an infinite family of
combinatorially distinct triangulated $d$-spheres such that their
dual graphs form a family of $(d+1)$-regular expanders. Moreover, there is a uniform bound (depending only on the dimension $d$) on the number of simplices incident to each vertex of these triangulations.
\end{thm}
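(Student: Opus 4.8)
I would realize these triangulations as output of the general construction of the paper (the one behind \Cref{thm:triangulations_vs_short_graphs}), fed with a carefully chosen family of inputs. Fix $d\ge 3$. The input I want is a family $\{G_n\}_{n\in\mathbb{N}}$ of connected $3$-regular graphs that are expanders, each equipped with a system $\Gamma_n$ of cycles of length at most a fixed constant $L$ that \emph{normally} generates $\pi_1(G_n)$; equivalently, attaching a disc along each cycle of $\Gamma_n$ makes $G_n$ into a simply connected $2$-complex. Applying the construction to $(G_n,\Gamma_n)$ produces a combinatorial $d$-manifold $M_n$ with a uniformly bounded number of simplices at each vertex; since the cycles of $\Gamma_n$ normally generate $\pi_1(G_n)$, the manifold $M_n$ is simply connected, and — the construction realizing $M_n$ by a handle decomposition that kills all the homology of the thickened graph and carries a single handle of top index — $M_n$ is a PL $d$-sphere. (For $d=3$ this last point is, alternatively, just the Poincar\'e conjecture: a simply connected closed $3$-manifold is $S^3$.)

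\textbf{Why the dual graph is an expander.} The construction is \emph{local}: it replaces each vertex of $G_n$, each edge of $G_n$, each cycle of $\Gamma_n$, and the finitely many caps that close the manifold up, by triangulated gadgets of size $O(1)$, glued according to a pattern graph $P_n$ which is $G_n$ with $O(1)$ extra vertices hung on per cycle of $\Gamma_n$. Since $\Gamma_n$ has bounded length and bounded multiplicity, $P_n$ has bounded degree and is obtained from $G_n$ by a bounded local modification, so $P_n$ is again an expander; and the dual graph $D(M_n)$ is obtained from $P_n$ by replacing each vertex and each edge by a connected graph on $O(1)$ vertices, an operation that changes the Cheeger constant of a bounded-degree graph by at most a fixed multiplicative factor. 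Hence $\{D(M_n)\}$ is an expander family; it is $(d+1)$-regular because $M_n$ is a triangulated $d$-manifold; the $M_n$ are pairwise combinatorially distinct because their sizes tend to infinity; and the uniform bound on the simplices incident to a vertex is part of the output. So the whole theorem reduces to constructing the input family $\{(G_n,\Gamma_n)\}$.

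\textbf{The main obstacle: the input graphs.} Producing $\{(G_n,\Gamma_n)\}$ is where the real difficulty lies. Equivalently, I must exhibit a family of finite, bounded-degree, simply connected $2$-complexes $X_n$ whose $2$-cells have bounded size and bounded multiplicity at each vertex and whose $1$-skeleta form an expander family; then I make $X_n^{(1)}$ into a $3$-regular graph by the standard device of blowing up each vertex of degree $k$ into a cubic gadget on $O(k)=O(1)$ vertices (for instance a $k$-cycle matched to the $k$ incident half-edges), subdividing where necessary, and adjoining the resulting short gadget cycles to $\Gamma_n$; collapsing the gadgets shows the coned-off complex stays homotopy equivalent to $X_n$, hence simply connected, while replacing vertices by bounded connected pieces preserves expansion up to constants. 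The hard part is the simply connected expanding $2$-complex $X_n$ itself. The naive attempt — take a bounded-degree expander $H$ and attach discs along a bounded-length system of cycles normally generating $\pi_1(H)$ — fails, because the fundamental cycles of any spanning tree of $H$ have length $\Theta(\log|H|)$ and the high-betweenness vertices of $H$ lie on too many of them, so one cannot make the discs short and of bounded multiplicity simultaneously by naive means; a genuinely clever construction is needed (e.g.\ via bounded-degree high-dimensional expanders with controlled low-dimensional topology, or a bespoke inductive or arithmetic construction), and this is the step I expect to take the most effort. Finally, for the field-coefficient analogue one needs only the weaker input that cycles of length $\le L$ generate $H_1(G_n;\mathbb{F})$ — with no $\pi_1$ condition — and the output is then merely an $\mathbb{F}$-homology $d$-sphere; the passage to honest spheres is precisely what forces the simply connected input.
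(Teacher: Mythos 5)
You correctly identify the reduction: feed a bounded-degree, topologically trivial $2$-complex with expanding $1$-skeleton into the thickening construction of \Cref{thm:complexes_to_manifolds}, pass to the boundary, and invoke quasi-isometry invariance of expansion (\Cref{fact:expanders_via_quasi_isometry}) for the dual graphs. You also correctly diagnose that the whole problem is producing the input family, and that the naive idea of attaching discs along a short cycle system in an expander $H$ cannot work since fundamental cycles have length $\Theta(\log|H|)$. But you then leave that crux unresolved --- ``a genuinely clever construction is needed'' is not a proof, and no candidate is constructed. That missing step is not a remark to be filled in later: it is the theorem's actual content. The paper's solution (\Cref{thm:collapsible_expanding_complexes} in \Cref{sec:telescopes}) is short and concrete and you would be unlikely to recover it from the hints you give: take a Bilu--Linial tower of $2$-fold covers of bounded-degree expanders, prepend a one-vertex layer, and form the mapping telescopes. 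Each telescope collapses layer by layer onto the bottom vertex (\Cref{claim:telescope_collapse}), and the $1$-skeleta form an expander family by a direct partition-into-layers estimate that exploits geometric layer growth together with expansion within each layer (\Cref{prop:telescope_expansion}). Without that idea your argument is incomplete.

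A secondary but substantive difference is that the paper arranges for the input complexes to be \emph{collapsible}, not merely simply connected and acyclic. Collapsibility of $\Delta$ makes $M(\Delta)$ collapsible, hence a PL disk by Whitehead's theorem, so $\partial M(\Delta)$ is a combinatorial $d$-sphere for every $d\ge 3$ with no appeal to any Poincar\'e theorem. Your route (simply connected $\mathbb{F}$-homology sphere $\Rightarrow$ homotopy sphere $\Rightarrow$ sphere) invokes Perelman for $d=3$, and for $d=4$ it only yields a homeomorphism to $S^4$ via Freedman, since the smooth/PL $4$-dimensional Poincar\'e conjecture is open; so you would lose the combinatoriality of the triangulation exactly where it is hardest to recover. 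The collapsibility hypothesis is precisely what sidesteps all of this uniformly in $d$.
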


The next corollary is deduced by taking connected sums.
\begin{cor}\label[corollary]{cor:expanding_manifolds}
    Let $M$ be a triangulable $d$-manifold. Then there is an infinite family of combinatorially distinct triangulations of $M$ in which the vertex degrees are uniformly bounded (with the bound depending on $M$) and for which the dual graphs form a family of $(d+1)$-regular expanders. If $M$ is PL, there exists such a family in which the triangulations are combinatorial.
\end{cor}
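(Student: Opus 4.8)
The plan is to obtain the required triangulations of $M$ by forming connected sums of one fixed triangulation of $M$ with the expander $d$-spheres supplied by \Cref{thm:expanding_S3s}. Assume $M$ is closed and connected (the case the statement literally addresses; for $M$ with boundary one works in the interior). Fix once and for all a triangulation $T_{0}$ of $M$, chosen combinatorial if $M$ is PL, and let $S_{1},S_{2},\ldots$ be the triangulated $d$-spheres of \Cref{thm:expanding_S3s}: their dual graphs $G(S_{i})$ are $(d+1)$-regular and form an expander family, and the number of simplices of $S_{i}$ meeting any single vertex is at most a constant $c_{d}$. Since an expander family consists of arbitrarily large graphs, the facet counts $|S_{i}|$ are unbounded, so after passing to a subsequence I may assume they are strictly increasing. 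For each $i$ I would delete the interior of one facet from $T_{0}$ and of one facet from $S_{i}$ and glue the two resulting boundary spheres --- each a copy of $\partial\Delta^{d}$ --- along the tautological simplicial isomorphism; call the result $T_{i}$. Then $T_{i}$ triangulates $M\#S^{d}\cong M$ (the $S_{i}$ of \Cref{thm:expanding_S3s} being PL--standard spheres, the connected sum is moreover PL--homeomorphic to $M$), and since a facet-wise connected sum of combinatorial manifolds is again one --- the link of each vertex of the glued $\partial\Delta^{d}$ becomes a connected sum of the two corresponding $(d-1)$-sphere links --- $T_{i}$ is combinatorial whenever $T_{0}$ is.

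Next I would read off the dual graph. Deleting the facet $\sigma$ from $T_{0}$ and $\tau$ from $S_{i}$ removes the corresponding vertices from $G(T_{0})$ and $G(S_{i})$, each leaving $d+1$ dangling half-edges; the gluing $\partial\sigma\cong\partial\tau$ pairs these up, so $G(T_{i})$ is precisely the graph obtained from $G(S_{i})$ by deleting one vertex and attaching the \emph{fixed finite} graph $G(T_{0})-\sigma$ via a $(d+1)$-edge matching. In particular $G(T_{i})$ differs from $G(S_{i})$ by a surgery whose size is bounded purely in terms of $M$ and $d$. It is $(d+1)$-regular since $M$ is closed; the $T_{i}$ are pairwise non-isomorphic because $|T_{i}|=|T_{0}|+|S_{i}|-2$ is strictly increasing; and each vertex of $T_{i}$ meets at most $c_{d}$ simplices coming from $S_{i}$ together with a bounded number coming from $T_{0}$, so the vertex degrees (indeed the numbers of incident simplices) are uniformly bounded in terms of $M$. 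It remains only to verify that $\{G(T_{i})\}$ is still an expander family.

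For this I would invoke the standard robustness of the Cheeger constant under bounded surgery: if a connected graph $G'$ with maximum degree $\Delta$ is obtained from a connected graph $G$ by adding and deleting at most $k$ vertices and edges, then $h(G')\ge f(\Delta,k)\,h(G)$ for a positive $f$ independent of $|V(G)|$, at least once $|V(G)|$ is large. The proof is the familiar dichotomy: for $S\subseteq V(G')$ with $|S|\le\tfrac12|V(G')|$, if $|S|$ exceeds a threshold depending only on $h(G),\Delta,k$ then the edge boundary of $S$ in $G'$ is at least $h(G)|S|-O(k\Delta)\ge\tfrac12 h(G)|S|$ (compare $S$ with $S\cap V(G)$ and, if necessary, its complement in $V(G)$), whereas if $|S|$ lies below that threshold then, $G'$ being connected and $S$ proper, the boundary contains at least one edge, again a fixed positive fraction of $|S|$. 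Applying this with $G=G(S_{i})$, $G'=G(T_{i})$ and $k$ bounded in terms of $M$ and $d$, the uniform lower bound on $h(G(S_{i}))$ yields a uniform lower bound on $h(G(T_{i}))$, so $\{G(T_{i})\}$ is an expander family, and in the PL case $\{T_{i}\}$ is a family of combinatorial triangulations of $M$. The only mildly delicate points are this stability lemma and the check that the facet-wise connected sum is simplicial and preserves combinatoriality and PL type; both are routine, so I do not expect a genuine obstacle beyond the bookkeeping.
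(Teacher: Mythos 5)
Your proposal takes the same route as the paper: form the connected sums $T_i = T_0 \# S_i$ by deleting a facet from each piece and gluing along the resulting boundary simplices, observe that this is simplicial, preserves combinatoriality, and yields pairwise non-isomorphic triangulations of $M$ with uniformly bounded vertex degrees. The only real difference is in how expansion of the dual graphs $G(T_i)$ is verified. The paper observes that $G(T_i)$ and $G(S_i)$ are related by an explicit relation that satisfies the hypotheses of \Cref{lem:quasi_isometric_relations} (pair each facet coming from the $T_0$-side with the deleted facet $\sigma_n$, and each remaining $S_i$-facet with itself, getting a $(1,1)$ quasi-isometry with additive constant $\operatorname{diam}(G(T_0))+1$), and then invokes \Cref{fact:expanders_via_quasi_isometry}. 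You instead argue a direct stability lemma: the Cheeger constant of a bounded-degree graph changes by at most a multiplicative constant under a surgery of bounded size, once the graph is large enough. Your sketch of that lemma is correct --- the dichotomy between large $S$ (where boundary-edge loss from the surgery is negligible) and small $S$ (where connectedness already forces one boundary edge) does the job, with constants depending on $|T_0|$, $d$, and the common Cheeger bound for $\{G(S_i)\}$. In effect your stability lemma is the $(1,1)$ quasi-isometry case of \Cref{fact:expanders_via_quasi_isometry}, proved from scratch, so the two arguments are close in spirit; the paper's version is marginally cleaner because it reuses infrastructure already proved, while yours has the mild advantage of being self-contained and making the robustness of the Cheeger constant fully explicit. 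One small remark about the paper itself: the displayed definition of $R_n$ there has an apparent slip, writing $(\rho,\sigma_0)$ where $(\rho,\sigma_n)$ must be meant, as the accompanying prose makes clear. Your correct implicit observation that the statement presupposes $M$ closed and connected (to make sense of a $(d+1)$-regular connected dual graph) is also worth keeping in mind; the paper leaves this implicit as well.
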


An analogous result in the Riemannian setting was first proved in
unpublished work of the second and third authors: there exist metrics
on $S^{3}$ with arbitrarily large volume and Cheeger constant uniformly
bounded from $0$. The version here answers a topological version
of a question about convex polytopes asked by Gil Kalai in \cite{Kalai_f_vector_theory,Kalai_skeletons_and_paths}.
Examples of simplicial $4$-polytopes with dual graphs that are close
to expanders (but which are not quite expanders) were described by
Lauri Loiskekoski and G\"unter Ziegler in \cite{simple_polytopes_without_small_separators2}.

\subsection{Paper outline and organization}

In \Cref{sec:cycle_bases} we define \emph{short classes} of graphs
and prove basic results about them. These are essentially bounded-degree
classes of graphs with ``short'' cycle bases over a given field; in
a sense they are the main object studied in the paper. In \Cref{sec:handle_structures}
we describe a topological construction that takes as input a $2$-complex
and outputs a triangulated manifold with boundary. \Cref{sec:boundary}
discusses the relations between the topology of the boundary of such
a manifold and the topology of the input $2$-complex. In \Cref{sec:encoding_facet_colorings}
we explain how to injectively encode facet-colored triangulations
of a manifold in slightly larger (uncolored) triangulations of the
same manifold: this makes our constructions injective. In \Cref{sec:graphs_and_manifolds}
we put these tools together and prove our main results. A main part
of the construction of expanding triangulations of $S^{3}$ is deferred
to \Cref{sec:telescopes}.

\subsection*{Acknowledgments}
	K.A. is supported by the
		Centre National de Recherche Scientifique, and the Horizon Europe ERC Grant number:
		101045750 / Project acronym: HodgeGeoComb.

        M.L. was partially supported by the Engineering and Physical Sciences Research Council (grant number EP/Y004256/1).

        G.Y. was partially supported by the Horizon Europe ERC Grant number:
		101045750 / Project acronym: HodgeGeoComb. He also thanks Zlil Sela for support and helpful conversations.

        For the purpose of open access, the authors have applied a CC BY public copyright licence to any author accepted manuscript arising from this submission.
\section{Notation and preliminaries}

A triangulation $\Delta$ of a $d$-manifold $M$ is a simplicial
complex with realization homeomorphic to $M$. The dual graph of such
a triangulation $\Delta$ is the graph with vertices the set of $d$-simplices,
and edges connecting each pair of $d$-simplices that meet in a $(d-1)$-simplex;
it is always $(d+1)$-regular if $M$ has no boundary.

In a simplicial complex $\Delta$, $\Delta^{(i)}$ denotes the set
of $i$-dimensional simplices and $\Delta^{\le i}$ denotes the $i$-skeleton.
We denote the barycentric subdivision of $\Delta$ by $b(\Delta)$.

The \emph{length} $\ell(\gamma)$ of a cycle $\gamma$ in a graph
is its number of edges.

The $d$-disk is denoted $D^{d}$, and $I=[0,1]$. All topology is
done in the PL category unless stated otherwise. We follow \cite{RS_PLTop}
for the theory of regular neighborhoods. Stellar operations are defined
in \cite{Lickorish_simplicial_moves}.

\subsection{Handle structures on manifolds}

\label[section]{sec:handle_decomposition_prelims}

We recall and summarize some basic PL handle theory (details can be
found in either of \cite{Hudson_PLTop,RS_PLTop}). For the following
discussion, $I=[0,1]$, and the center point of a cube $I^{d}$ is
$\left(\frac{1}{2},\ldots\frac{1}{2}\right)$. A $d$-dimensional \emph{$k$-handle} (or \emph{handle
of index $k$}) is a space $H\overset{\text{homeo.}}{\cong}I^{k}\times I^{d-k}$
which is attached to a $d$-manifold $W$ along an embedding $f:\left(\partial I^{k}\right)\times I^{d-k}\rightarrow\partial W$.
The map $f$ is then the \emph{attaching map} of $H$ to $W\cup_{f}H$
. The \emph{core} of $H$ is $\mathrm{core}(H)=I^{k}\times \left(\frac{1}{2},\ldots\frac{1}{2}\right)$. The
manifold $W\cup_{f}H$ is thus determined by $W$ together with the
attaching map $f$. The image $\mathrm{im}(f)$ is (by definition)
a regular neighborhood of $f\left(\partial\mathrm{core}(H)\right)$
in $\partial W$: the pair $\left(\mathrm{im}(f),f(\partial\mathrm{core}(H)\right)$
is homeomorphic to $\left(\partial I^{k}\times I^{d-k},\partial I^{k}\times\left(\frac{1}{2},\ldots\frac{1}{2}\right)\right)$,
so the first space collapses onto the second (which is contained in
the first space's interior). 

A PL embedding $f:\partial I^{k} \simeq \partial \mathrm{core}(H)\rightarrow\partial W$
need not extend to an attaching map of a $k$-handle $H$ to $W$. Identifying
the image of such an $f$ with $\partial I^{k}$ and denoting a regular
neighborhood of $\partial I^{k}$ within $\partial W$ by $E$, (any
two such neighborhoods are equal up to an ambient isotopy fixing $\partial I^{k}$,)
it is clear that an embedding $f$ extends to an attaching map if
and only if the pair $\left(E,\partial I^{k}\right)$ is homeomorphic
to $\left(\partial I^{k}\times I^{d-k},\partial I^{k}\times\left(\frac{1}{2},\ldots\frac{1}{2}\right)\right)$.
This condition always holds if $k=0$ or $k=1$: if $k=0$ this is
trivial, while if $k=1$ it follows from Whitehead's theorem that
a regular neighborhood of any point in a manifold without boundary
is a disk that contains the point in its interior. For the construction
below we need the case $k=2$.
\begin{fact}
\label[fact]{fact:2_core_extension}If $k=2$, $f:\partial I^{2}\rightarrow\partial W$
extends to an attaching map of a $2$-handle if and only if a regular
neighborhood $E$ of the image is orientable. 
\end{fact}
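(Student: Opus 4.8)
The plan is to translate the extension problem into a question about the normal bundle of the circle $C\coloneqq f(\partial I^{2})$ inside $\partial W$, and then invoke the classification of $(d-2)$-disk bundles over $S^{1}$. Note first that a $2$-handle is by definition $I^{2}\times I^{d-2}$ attached along $\partial I^{2}\times I^{d-2}\cong S^{1}\times D^{d-2}$, so $f$ extends to the attaching map of a $2$-handle if and only if $C$ has a regular neighborhood in $\partial W$ realized as a pair $(S^{1}\times D^{d-2},\,S^{1}\times\{0\})$ — i.e. if and only if the normal disk bundle of $C$ in $\partial W$ is \emph{trivial}. Since regular neighborhoods are unique up to ambient isotopy (see \cite{RS_PLTop}), it is immaterial which regular neighborhood $E$ of $C$ we use, and the statement reduces to: the normal disk bundle of $C$ is trivial $\iff$ $E$ is orientable. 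Two ingredients enter: (a) $C$, being a PL $1$-submanifold, is locally flat in $\partial W$ (the link of each point of $C$ is a PL $(d-2)$-sphere meeting $C$ in an unknotted $S^{0}$), so a regular neighborhood $E$ of $C$ genuinely is the total space of a normal $D^{d-2}$-(block) bundle over $C\cong S^{1}$ with $C$ as zero section; and (b) the standard fact that there are exactly two $D^{d-2}$-bundles over $S^{1}$, the trivial one $S^{1}\times D^{d-2}$ and a twisted one, distinguished by whether the total space is orientable.

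For the ``only if'' direction, suppose $f$ extends to an attaching map $\hat f\colon\partial I^{2}\times I^{d-2}\to\partial W$ of a $2$-handle $H$. By the definition recalled in the excerpt, $\operatorname{im}(\hat f)$ is a regular neighborhood of $\hat f(\partial\operatorname{core}(H))=f(\partial I^{2})=C$, and the pair $(\operatorname{im}(\hat f),C)$ is homeomorphic to $(\partial I^{2}\times I^{d-2},\,\partial I^{2}\times\{\mathrm{center}\})\cong(S^{1}\times D^{d-2},\,S^{1}\times\{0\})$; in particular $\operatorname{im}(\hat f)$ is orientable. By uniqueness of regular neighborhoods, any regular neighborhood $E$ of $C$ is homeomorphic to $\operatorname{im}(\hat f)$, hence orientable.

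For the ``if'' direction, suppose $E$ is orientable. By (a), $E$ is the total space of a normal $D^{d-2}$-bundle over $C\cong S^{1}$ with zero section $C$; by the classification in (b), orientability of $E$ forces this bundle to be trivial, so there is a PL homeomorphism $h\colon E\to S^{1}\times D^{d-2}$ carrying $C$ to $S^{1}\times\{0\}$. Composing with $h$ turns the given parametrization $f\colon\partial I^{2}\to C$ into a homeomorphism $\psi\colon\partial I^{2}\to S^{1}\times\{0\}$ of circles, and then
\[
\hat f\ \coloneqq\ h^{-1}\circ(\psi\times\operatorname{id}_{I^{d-2}})\colon\ \partial I^{2}\times I^{d-2}\ \longrightarrow\ E\ \subseteq\ \partial W
\]
(identifying $I^{d-2}$ with $D^{d-2}$ so that its center maps to $0$) is an embedding onto the regular neighborhood $E$ of $C$, restricting to $f$ on the core boundary $\partial I^{2}\times\{\mathrm{center}\}\cong\partial I^{2}$ and realizing $(\operatorname{im}(\hat f),C)$ as $(\partial I^{2}\times I^{d-2},\,\partial I^{2}\times\{\mathrm{center}\})$. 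Thus $\hat f$ is an attaching map of a $2$-handle extending $f$.

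I expect the one point needing care to be ingredient (b), specifically that an \emph{orientable} $D^{d-2}$-bundle over $S^{1}$ is trivial. For small $d$ this is elementary (for $d=3$ it is the statement that an orientable interval bundle over $S^{1}$ is the annulus), but in general it says that the monodromy — an orientation-preserving PL self-homeomorphism of $D^{d-2}$ — can be absorbed into a change of trivialization, equivalently that $\pi_{0}$ of the (block) PL homeomorphism group of a disk is detected by orientation; I would cite this from the regular-neighborhood and block-bundle theory of \cite{RS_PLTop}. Everything else is a routine consequence of the existence and uniqueness of regular neighborhoods.
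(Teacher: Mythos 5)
Your proposal is correct and takes essentially the same approach as the paper: both reduce the extension question to the dichotomy that a regular neighborhood of an embedded $S^{1}$ in a PL manifold is either the trivial $D^{d-2}$-block bundle over $S^{1}$ (hence of the form $(\partial I^{2}\times I^{d-2},\partial I^{2}\times\{\text{center}\})$) or nonorientable. The only real difference is that you cite this dichotomy from block-bundle theory, whereas the paper proves it directly by cutting the neighborhood along a transverse $(d-1)$-disk, invoking Whitehead's theorem to see the cut neighborhood is a disk, and observing that the re-gluing is determined up to homeomorphism by the isotopy class (orientation-preserving or -reversing) of the gluing self-homeomorphism of $I^{d-1}$.
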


This is well known.
\begin{proof}
[Proof sketch] We prove that a regular neighborhood $N$ of an embedded
$S^{1}$ within a PL $d$-manifold $M$ is either homeomorphic to
$\partial I^{2}\times I^{d-1}$ or nonorientable. First, choose a
$\left(d-1\right)$-disk $h$ intersecting the $S^{1}$ transversely
at a point $p$. Cut $M$ along $h$ to obtain a manifold $M'$ with
a new boundary sphere $S^{d-1}$ (consisting of two copies of $h$
glued along their boundary). The embedded $S^{1}\subset M$ is cut
into a segment, which is properly embedded in $M'$; the regular neighborhood
$N$ is cut into a regular neighborhood $N'$ of this segment. By
Whitehead's theorem, $N'$ is a $d$-disk $D$. Re-gluing $N'$ to
itself across the two copies of $h$ recovers $N$ from $N'$, and
topologically it has the effect of identifying two disjoint $\left(d-1\right)$-disks
in $\partial N'$. We denote these disks by $\left(h\cap N\right)^{+}$
and $\left(h\cap N\right)^{-}$. The triple 
\[
\left(D,\left(h\cap N\right)^{+},\left(h\cap N\right)^{-}\right)\quad\text{is homeomorphic to}\quad\left(I^{d},\left\{ 1\right\} \times I^{d-1},\left\{ 0\right\} \times I^{d-1}\right),
\]
and the gluing $N$ is therefore homeomorphic to a quotient space
of $I^{d}$ obtained by identifying $\left\{ 1\right\} \times I^{d-1}$
with $\left\{ 0\right\} \times I^{d-1}$ via some homeomorphism.
The isotopy class of this homeomorphism determines the homeomorphism
type of the quotient space, and hence of $N$. There are only two
isotopy classes of self-homeomorphisms of a disk $I^{d-1}$, only
one of which results in an orientable quotient manifold.
\end{proof}

\subsection{Quasi-isometries}

We recall some definitions and basic results.
\begin{defn}
Let $L,c>0$. An $(L,c)$ quasi-isometry between metric spaces $X$
and $Y$ is a function $f:X\rightarrow Y$ (not necessarily continuous)
such that:
\begin{enumerate}
\item For each $y\in Y$ there exists $x\in X$ such that $d\left(f(x),y\right)\le c$.
\item If $x,x'\in X$ then $\frac{1}{L}d\left(f(x),f(x')\right)-c\le d(x,x')\le Ld(f(x),f(x'))+c$.
\end{enumerate}
The spaces $X,Y$ are called quasi-isometric if there exists a quasi-isometry
$X\rightarrow Y$.

\end{defn}

\begin{rem}
\begin{enumerate}
\item If the constants don't matter much, we can also discuss $c$ quasi-isometries,
by which we mean $(c,c)$ quasi-isometries in the sense above.
\item Given an $(L,c)$ quasi-isometry $f:X\rightarrow Y$, one can construct a quasi-isometry $g:Y\rightarrow X$ as follows: for each $y \in Y$, choose $x\in X$ such that $d(f(x),y) \le c$, and set $g(y) = x$. This is a near-inverse to $f$ in the sense that
that $d(y,f\circ g(y))<c$
for all $y\in Y$ and $d(x,g\circ f(x))<c'$ for all $x\in X$, where $c'>0$ is some constant depending only on $(L,c)$.
\item A composition of $\left(L,c\right)$ quasi-isometries is again a quasi-isometry,
with constants depending only on $\left(L,c\right)$ (but generally
larger).
\item For quasi-isometry related purposes it is convenient to work with
vertex sets of graphs rather than the graphs themselves (where the
vertex sets are metrized by distance, measured by the lengths of paths).
The difference is inessential: if the vertex sets of two graphs are
$\left(L,c\right)$ quasi-isometric, and we geometrize the graphs'
realizations (as simplicial complexes, including edges) by setting
all edge lengths to be $1$, these new spaces are also $\left(L',c'\right)$
quasi-isometric where $\left(L',c'\right)$ depend only on $\left(L,c\right)$.
\end{enumerate}
The following definition and lemma are useful for producing quasi-isometries
of graphs.
\end{rem}

\begin{defn}
A metric space $X$ is $\left(L,c\right)$ \emph{quasi-geodesic} if
for any $x,x'\in X$ (with $x=x'$ allowed) there are $n+1$ points
$x_{0}=x,x_{1},\ldots,x_{n}=x'\in X$ satisfying $d(x_{i},x_{i+1})\le c$
for all $i<n$ as well as $n\le Ld(x,x')+1.$ (The $+1$ here is necessary
in case $d(x,x')<\frac{1}{L}$).
\end{defn}

Note that a connected graph is always $\left(1,1\right)$ quasi-geodesic. 
\begin{lem}
\label[lemma]{lem:quasi_isometric_relations}Let $X$ and $Y$ be
$\left(L,c\right)$ quasi-geodesic metric spaces. Suppose there exists
a relation $R\subset X\times Y$ and an $M>0$ such that:
\begin{enumerate}
\item For each $x\in X$ the set $\left\{ y\in Y\mid xRy\right\} $ is nonempty,
and similarly for each $y\in Y$ the set $\left\{ x\in X\mid xRy\right\} $
is nonempty.
\item If $x,x'\in X$ (not necessarily distinct) satisfy $d(x,x')\le c$
and $y,y'\in Y$ satisfy $xRy$ and $x'Ry'$ then $d(y,y')\le M$.
\item If $y,y'\in Y$ (not necessarily distinct) satisfy $d(y,y')\le c$
and $x,x'\in X$ satisfy $xRy$ and $x'Ry'$ then $d(x,x')\le M$.
\end{enumerate}
Then $X$ is quasi-isometric to $Y$, with quasi-isometry constants
depending only on $L,M$.

\end{lem}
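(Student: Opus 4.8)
The plan is to construct an explicit quasi-isometry $f : X \to Y$ directly from the relation $R$ and verify the two defining conditions using the quasi-geodesic hypotheses. First I would define $f$ by choosing, for each $x \in X$, some $y \in Y$ with $xRy$ (possible by condition (1)); the choice is arbitrary but fixed. The easy half is the "coarse surjectivity" condition: given $y \in Y$, pick any $x$ with $xRy$; then $f(x)$ and $y$ are both $R$-related to $x$, which is within distance $0 \le c$ of itself, so condition (2) gives $d(f(x), y) \le M$. Hence $f$ is coarsely surjective with constant $M$.

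The main work is the bi-Lipschitz-type estimate. For the upper bound on $d(f(x), f(x'))$ in terms of $d(x,x')$: using that $X$ is $(L,c)$ quasi-geodesic, take a chain $x = x_0, x_1, \dots, x_n = x'$ with $d(x_i, x_{i+1}) \le c$ and $n \le L d(x,x') + 1$. Choose $y_i$ with $x_i R y_i$ for each $i$ (with $y_0 = f(x)$, $y_n = f(x')$ — or just take $y_i = f(x_i)$ throughout). Condition (2) applied to the consecutive pair $x_i, x_{i+1}$ gives $d(y_i, y_{i+1}) \le M$, so by the triangle inequality $d(f(x), f(x')) \le nM \le M(L d(x,x') + 1)$. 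This is a bound of the form $d(f(x),f(x')) \le L' d(x,x') + c'$ with $L' = ML$ and $c' = M$. The reverse inequality, i.e.\ a lower bound on $d(f(x),f(x'))$, is obtained symmetrically: use that $Y$ is $(L,c)$ quasi-geodesic to produce a chain from $f(x)$ to $f(x')$ of length $\le L d(f(x),f(x')) + 1$ with consecutive steps $\le c$; pull each intermediate point back to an $R$-preimage in $X$ via condition (1), apply condition (3) to consecutive pairs to get steps of size $\le M$ in $X$, and conclude $d(x,x') \le M(L d(f(x),f(x')) + 1)$, which rearranges to $\tfrac{1}{ML} d(f(x),f(x')) - \tfrac{1}{L} \le d(x,x')$. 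Combining the two bounds and enlarging constants to a common $(L'',c'')$ depending only on $L$ and $M$ yields condition (2) of the quasi-isometry definition.

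I would then remark that in the pull-back step one should be slightly careful: when I pull an intermediate point $y_i$ of the $Y$-chain back to a point $x_i \in X$ with $x_i R y_i$, the endpoints $y_0 = f(x)$ and $y_n = f(x')$ already have distinguished preimages $x$ and $x'$, and condition (3) (applied with the pair $y, y$ at distance $0$, or simply using that $x R f(x)$) ensures these fixed preimages are within $M$ of whatever preimage the chain argument selects, so the telescoping still reaches $x$ and $x'$ up to an additive $2M$. This costs only a harmless additive constant.

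The only real obstacle, and it is minor, is bookkeeping the constants: one must check that every constant appearing ($L'$, $c'$, and the final $(L'',c'')$) is expressible purely in terms of $L$ and $M$ and does not secretly depend on $c$ or on the spaces. This works because $c$ enters only as the step size of the quasi-geodesic chains, and conditions (2)--(3) convert a step of size $\le c$ on one side into a step of size $\le M$ on the other — so $c$ never propagates into the output estimates. Once this is observed, the proof is a routine triangle-inequality computation, and I would present it at roughly that level of detail without grinding through the arithmetic of combining the linear bounds.
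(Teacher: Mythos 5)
Your proposal is correct and follows essentially the same approach as the paper: define $f$ by arbitrary choice from $R$, use quasi-geodesic chains in $X$ with condition (2) for the upper bound, chains in $Y$ pulled back via conditions (1) and (3) for the lower bound, and condition (2) once more for coarse surjectivity. Your worry about the pull-back endpoints is unnecessary since $x R f(x)$ lets you take $x_0 = x$ and $x_n = x'$ directly, as the paper does, but your workaround is also valid.
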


\begin{proof}
We construct a quasi-isometry $f:X\rightarrow Y$ as follows. For
each $x\in X$ choose $f(x)$ to be an arbitrary element of $\left\{ y\in Y\mid xRy\right\} $.
If $x,x'\in X$ then there exist $x_{0}=x,x_{1},\ldots,x_{n}=x'\in X$
such that $d(x_{i},x_{i+1})\le c$ for all $i<n$, and in addition
$n\le Ld(x,x')+1$. By (2) we have 
\[
d(f(x),f(x'))\le d(f(x_{0}),f(x_{1}))+d(f(x_{1}),f(x_{2}))+\ldots+d(f(x_{n-1}),f(x_{n}))
\]
\[
\le Mn\le MLd(x,x')+M.
\]
Similarly, for $y=f(x)$ and $y'=f(x')$ there exist $y_{0}=y,y_{1},\ldots,y_{n}=y'\in Y$
such that $d(y_{i},y_{i+1})\le c$ for all $i<n$, and in addition
$n\le Ld(y,y')+1$. For each $0<i<n$ choose $x_{i}$ such that $x_{i}Ry_{i}$.
By (3) we have
\[
d(x,x')\le d(x_{0},x_{1})+\ldots+d(x_{n-1},x_{n})\le Mn\le MLd(f(x),f(x'))+M.
\]

Finally, note that if $y\in Y$ then there exists $x\in X$ with $xRy$,
and $d(f(x),y)\le M$ by (2), using $xRy$ and $xRf(x)$.
\end{proof}
\begin{fact}
\label[fact]{fact:expanders_via_quasi_isometry}Let $\left\{ G_{n}\right\} _{n\in\mathbb{N}}$
and $\left\{ H_{n}\right\} _{n\in\mathbb{N}}$ be families of graphs.
Assume that for each n there is an $\left(L,c\right)$ quasi-isometry
$f_{n}:H_{n}\rightarrow G_{n}$, and that there is a uniform bound
$k$ on the vertex degrees of the graphs in both families. If $\left\{ G_{n}\right\} _{n\in\mathbb{N}}$
is an expander family then so is $\left\{ H_{n}\right\} _{n\in\mathbb{N}}$.
\end{fact}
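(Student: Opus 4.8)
The plan is to establish the two defining properties of an expander family for $\{H_n\}$ --- a uniform positive lower bound on the Cheeger constants and $|V(H_n)|\to\infty$ --- the uniform degree bound $k$ being given. It is convenient to work with the edge Cheeger constant $h(X)=\min\{|E_X(S,S^c)|/|S|:\ 0<|S|\le|V(X)|/2\}$ (comparable to the vertex version when degrees are bounded, so the notion of expander family is unaffected), and to use that $h(X)\ge\varepsilon$ implies the symmetric isoperimetric bound $|E_X(S,S^c)|\ge\varepsilon\,|S|\,|S^c|/|V(X)|$ for \emph{every} $S\subseteq V(X)$. Here $A^c:=V(X)\setminus A$, $E_X(S,S^c)$ is the set of edges with exactly one endpoint in $S$, and I write $\partial A=\{x\notin A:\ d(x,A)=1\}$ and $N_r(A)=\{x:\ d(x,A)\le r\}$.

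First I would record the coarse-geometry bookkeeping. Since all graphs have degree $\le k$, a ball of radius $R$ contains at most $V_k(R)$ vertices, with $V_k(R)$ depending only on $k,R$. Fix $n$, drop it, and let $f\colon H\to G$ be the given $(L,c)$ quasi-isometry; by the near-inverse construction in the remark following the definition of quasi-isometry there is an $(L_0,c_0)$ quasi-isometry $g\colon G\to H$ with $d\big(x,g(f(x))\big)\le c_0$ and $d\big(y,f(g(y))\big)\le c_0$, all constants depending only on $(L,c)$. Unwinding the definitions: $f$ and $g$ are each at most $m$-to-one for some $m=m(L,c,k)$; $\mathrm{im}(f)$ and $\mathrm{im}(g)$ are $c_0$-dense; and $g$ sends the two endpoints of any edge to points at distance $\le\rho_0:=L_0(1+c_0)$. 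Counting preimages and covering by $c_0$-balls gives $|V(H)|\le m\,|V(G)|$ and $|V(G)|\le V_k(c_0)\,|V(H)|$, so $|V(H_n)|$ and $|V(G_n)|$ agree up to a factor depending only on $(L,c,k)$; in particular $|V(H_n)|\to\infty$. (Each $G_n$ is connected, having positive Cheeger constant, and a quasi-isometry into a connected graph has connected source, so each $H_n$ is connected.)

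Now assume $h(G_n)\ge\varepsilon>0$ for all $n$ and fix $A\subseteq V(H)$ with $0<|A|\le|V(H)|/2$; I want $|E_H(A,A^c)|\ge\varepsilon' |A|$ for a constant $\varepsilon'=\varepsilon'(L,c,k,\varepsilon)>0$. Set $\widetilde A:=N_{c_0}(A)$ and $B:=g^{-1}(\widetilde A)\subseteq V(G)$, so that $B^c=g^{-1}\big(V(H)\setminus\widetilde A\big)$. Using the bookkeeping above together with the elementary remark that a vertex outside $A$ at distance $j\ge 1$ from $A$ lies within $j-1$ of $\partial A$, I would check: (a) $|B|\ge|A|/V_k(2c_0)$, by $c_0$-density of $\mathrm{im}(g)$; (b) $|B^c|\ge\big(|A^c|-V_k(2c_0)\,|\partial A|\big)/V_k(2c_0)$, by the same density argument applied to $\{x:\ d(x,A)>2c_0\}$, whose preimages under $g$ lie in $B^c$; and (c) $|E_G(B,B^c)|\le C_1\,|\partial A|$ for a constant $C_1=C_1(L,c,k)$, because the $B^c$-endpoint of any edge in $E_G(B,B^c)$ is carried by $g$ into $N_{\rho_0+c_0}(\partial A)$ and each such vertex has $\le k$ incident edges. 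Feeding (a)--(c) into the isoperimetric bound for $B$ in $G$ and using $|V(G)|\le V_k(c_0)|V(H)|$ gives
\[
C_1\,|\partial A|\ \ge\ |E_G(B,B^c)|\ \ge\ \frac{\varepsilon}{V_k(c_0)\,|V(H)|}\cdot\frac{|A|}{V_k(2c_0)}\cdot\frac{|A^c|-V_k(2c_0)\,|\partial A|}{V_k(2c_0)}.
\]
If $|\partial A|\ge|A^c|/(2V_k(2c_0))$ then already $|E_H(A,A^c)|\ge|\partial A|\ge|A|/(2V_k(2c_0))$ since $|A^c|\ge|A|$; otherwise $|A^c|-V_k(2c_0)|\partial A|>|A^c|/2\ge|V(H)|/4$, and the display yields $|\partial A|\ge\varepsilon'|A|$ for an explicit $\varepsilon'$. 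As $|E_H(A,A^c)|\ge|\partial A|$, in both cases $h(H_n)\ge\varepsilon'$ uniformly in $n$, which together with the first paragraph completes the proof.

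The main obstacle is that a quasi-isometry need not respect the cardinalities of complements: when $A$ is almost all of $V(H)$, the pullback $B$ can occupy far more than half of $V(G)$, so one cannot simply invoke $|E_G(B,B^c)|\ge\varepsilon|B|$. The way around this is to argue with the symmetric quantity $|B|\,|B^c|/|V(G)|$ and to prove that \emph{both} $B$ and $B^c$ are large (step (b)); conceptually, a near-splitting of $H_n$ into two macroscopic pieces with small interface must induce one of $G_n$, contradicting expansion. Everything else is routine neighbourhood counting once the quasi-isometry constants are unwound. (Alternatively, one could simply cite the well-known fact that expansion is a quasi-isometry invariant among families of graphs of uniformly bounded degree.)
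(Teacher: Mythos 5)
Your proof is correct, and it takes a genuinely different route from the paper's. The paper argues by contradiction: assuming $\{H_n\}$ is not an expander family, it picks sets $A=A_n$ of arbitrarily small edge expansion $\varepsilon$, pushes $A$ forward through $f$ to a $(2c+1)$-neighbourhood $N_A\subseteq V(G)$ of $f(A)$, proves separately that both $|N_A|$ and $|N_A^c|$ are $\Theta(|V(G)|)$, applies expansion of $G$ to get a large cut $E(N_A,N_A^c)$, and then transports that cut back to $H$ via the near-inverse $g$ to obtain a set that is simultaneously $\Omega(|A|)$ and $O(\varepsilon|A|)$, which is absurd for small $\varepsilon$. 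You instead argue directly: for every $A$ with $|A|\le|V(H)|/2$ you pull back a bounded neighbourhood of $A$ through $g$ to get $B\subseteq V(G)$, invoke the symmetric isoperimetric inequality $|E_G(B,B^c)|\ge\varepsilon|B||B^c|/|V(G)|$ (which you correctly identify as the device that handles the possibility that $B$ occupies far more than half of $V(G)$), and translate $|B|$, $|B^c|$ and $|E_G(B,B^c)|$ back into $|A|$, $|A^c|$ and $|\partial A|$ via the bookkeeping in (a)--(c). The symmetric inequality does in one line what the paper's $(*)$-step (showing $|N_A^c|=\Theta(|V(G)|)$) does over several paragraphs, and the direct formulation yields an explicit lower bound $h(H_n)\ge\varepsilon'(L,c,k,\varepsilon)$, whereas the paper's $O/\Omega/\Theta$ bookkeeping leaves the constants implicit. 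Both proofs use the same near-inverse $g$ and the same ball-volume counting; the structural differences are the direction of transport (you pull back through $g$ from the start, the paper pushes forward through $f$ and brings in $g$ only near the end) and direct versus contradiction.
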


This is known: see for instance \cite[Lemma 5.8 and Remark 5.12]{Arzhantseva_origami}. It seems difficult to find a proof in the literature, so we include one for completeness.

\begin{proof}
Assume otherwise for a contradiction. Then for any $\varepsilon>0$
we may find some $n\in\mathbb{N}$ and a subset $A_{n}\subset V\left(H_{n}\right)$
containing at most half of the vertices of $H_{n}$ such that 
\[
\frac{\left|E\left(A_{n},A_{n}^{c}\right)\right|}{\left|A_{n}\right|}<\varepsilon.
\]
In particular, this implies $\left|V\left(H_{n}\right)\right|\ge\frac{1}{\varepsilon}$
and hence we may also choose such an $n$ such that $H_{n}$ has at
least $m$ vertices, for any given $m$; and since the vertex degrees
are bounded, we may also demand that $H_{n}$ have diameter larger
than any given integer. We now suppress $n$ from the notation, so
that for any given $\varepsilon>0$ we have $f:H\rightarrow G$ a
quasi-isometry and $A\subseteq V(H)$ as above, with $H$ as large
as desired and $G$ an expander (with Cheeger constant bounded below
by some fixed $\alpha>0$.) 

We use big-O notation as follows: for expressions $X,Y$ defined in
the proof (typically subsets of vertex or edge sets of $G$ and $H$,)
$X=O\left(Y\right)$ means that there exists a real $r>0$, depending
only on the parameters $L$, $c$, $k$, and $\alpha$, (but not on
$\varepsilon$,) such that $X\le r\cdot Y$ holds whenever $\varepsilon$
is small enough. Similarly, $Y=\Omega\left(X\right)$ means $X=O\left(Y\right)$
(or $Y\ge r\cdot X$ for some $r$ as above.) We also denote $X=\Theta\left(Y\right)$
if both $X=O\left(Y\right)$ and $Y=O\left(X\right)$. Each of $O$
and $\Omega$ is transitive, and $\Theta$ is an equivalence relation.

Note that $\left|V\left(H\right)\right|=\Omega\left(\left|V\left(G\right)\right|\right)$,
because each vertex of $G$ is at distance at most $c$ from a vertex
of $f\left(V\left(H\right)\right)$, so that $\left|V\left(G\right)\right|\le k^{c}\left|V\left(H\right)\right|$.
Also, if $S\subseteq V\left(H\right)$ then $\left|f\left(S\right)\right|=\Omega\left(\left|S\right|\right)$.
This holds because $f$ maps at most $k^{Lc+L+1}$ vertices of $H$
to a vertex of $G$: if $v,v'\in V\left(H\right)$ have $d\left(v,v'\right)\ge Lc+L$
then $d\left(f\left(v\right),f\left(v'\right)\right)\ge\frac{1}{L}d\left(v,v'\right)-c\ge1$.
Note that for any $d\in\mathbb{N}$ there are at most $\sum_{0\le i\le d}k^{i}\le k^{d+1}$
vertices at distance at most $d$ from any $v\in V\left(H\right)$.
In particular we have $\left|V\left(H\right)\right|=O\left(\left|V\left(G\right)\right|\right)$
and hence also $\left|V\left(H\right)\right|=\Theta\left(V\left(G\right)\right)$.

Let $N_{A}$ be a $2c+1$-neighborhood of $f\left(A\right)$. We now
show that $\left|N_{A}^{c}\right|=\Omega\left(\left|V\left(G\right)\right|\right)$.
Denote $s=\lceil\left(3c+1\right)L\rceil$. Then there are at most
$k^{s}\left|E\left(A,A^{c}\right)\right|$ vertices in $H$ which
are not in $A$ but have distance at most $s$ from $A$, because
each such vertex has a path from a nearest vertex in $A$; such a
path begins with an edge $e\in E\left(A,A^{c}\right)$, and continues
along one of at most $k^{s}$ paths of length at most $s-1$ from
the terminal vertex of $e$ in $A^{c}$. In particular, if $\varepsilon$
is small enough (so that $\left|A\right|+k^{s}\left|E\left(A,A^{c}\right)\right|\le\left(1+k^{s}\varepsilon\right)\left|A\right|\le\frac{2}{3}\left|V\left(H\right)\right|$)
then the set $B\subset V\left(H\right)$ of vertices having distance
greater than $s$ from $A$ satisfies $\left|B\right|\ge\frac{1}{3}\left|V\left(H\right)\right|$.
It follows that $\left|f\left(B\right)\right|=\Theta\left(\left|V\left(G\right)\right|\right)$.
Observe that any vertex of $f\left(A\right)$ has distance strictly
larger than $2c+1$ from any vertex of $f\left(B\right)$: for any
$a\in A$ and $b\in B$ we have 
\[
d\left(f\left(a\right),f\left(b\right)\right)\ge\frac{1}{L}d\left(a,b\right)-c>\frac{1}{L}s-c\ge2c+1,
\]
since $d\left(a,b\right)>s$. Hence $f\left(B\right)\subseteq N_{A}^{c}$,
and also 
\[
\left|N_{A}^{c}\right|\overset{*}{=}\Theta\left(\left|V\left(G\right)\right|\right).
\]

We have $\left|E\left(N_{A},N_{A}^{c}\right)\right|=\Omega\left(\left|N_{A}\right|\right)$:
if $\left|N_{A}\right|\le\frac{1}{2}\left|V\left(G\right)\right|$
this holds by $\alpha$-expansion of $G$. Otherwise, since $\left|N_{A}^{c}\right|\le\frac{1}{2}\left|V\left(G\right)\right|$,
we can apply the Cheeger inequality to $N_{A}^{c}$ and obtain
\[
\left|E\left(N_{A},N_{A}^{c}\right)\right|\ge\alpha\left|N_{A}^{c}\right|\overset{\text{by (*)}}{=}\Theta\left(\left|V\left(G\right)\right|\right)=\Omega\left(\left|N_{A}\right|\right),
\]
so the conclusion holds in either case.

Define $g:V\left(G\right)\rightarrow V\left(H\right)$ as follows:
for each $v\in V\left(G\right)$ choose a nearest vertex $f\left(w\right)\in\mathrm{im}\left(f\right)$,
and set $g\left(v\right)=w$. Then $g$ is a quasi-isometry which
is at most $k^{c+1}$-to-one, and satisfies that $d\left(v,g\circ f(v)\right)\le c'$
for all $v\in V\left(H\right)$, for some constant $c'$ depending
only on $\left(L,c\right)$. 

Let $C\subseteq N_{A}$ be the set of endpoints in $N_{A}$ of edges
in $E\left(N_{A},N_{A}^{c}\right)$. Then 
\[
\left|C\right|\ge\frac{1}{k}\left|E\left(N_{A},N_{A}^{c}\right)\right|=\Omega\left(\left|N_{A}\right|\right),
\]
and we conclude 
\[
\left|g\left(C\right)\right|=\Omega\left(\left|A\right|\right)
\]
because $N_{A}\supseteq f\left(\left|A\right|\right)=\Theta\left(\left|A\right|\right)$. 

Observe that each vertex of $g\left(C\right)$ has bounded distance
$t$ (depending only on $\left(L,c\right)$) from $A$. To see this,
note that $C\subseteq N_{A}$, so given $v\in C$ we may find $w\in A$
with $d\left(v,f\left(w\right)\right)\le2c+1$. By definition of $g$
we have $g\left(v\right)=w'$ for some $w'\in V\left(H\right)$ with
$d\left(v,f\left(w'\right)\right)\le c$, and hence $d\left(f\left(w'\right),f\left(w\right)\right)\le3c+1$.
This implies an upper bound $t$ on $d\left(w',w\right)=d\left(g\left(v\right),w\right)\ge d\left(g\left(v\right),A\right)$.
As above, there are at most $k^{t}\left|E\left(A,A^{c}\right)\right|\le\varepsilon\left|A\right|$
vertices in $H$ which are not in $A$ but have distance at most $t$
from $A$. Hence we have both $\left|g\left(C\right)\right|=O\left(\varepsilon\left|A\right|\right)$
and $\left|g\left(C\right)\right|=\Omega\left(\left|A\right|\right)$.
Taking $\varepsilon$ small enough gives a contradiction.
\end{proof}

\subsection{PL collapse}

Our notion of collapse is the same as in \cite[Ch. 3]{RS_PLTop}.
We recall the definition:
\begin{defn}
[\cite{RS_PLTop}] Suppose $X\supset Y$ are polyhedra such that $X=Y\cup D^{n}$,
where $Y\cap D^{n}$ is a face\footnote{A face $D^{n-1}$ of a disk $D^{n}$ is a subdisk of the boundary
$D^{n-1}\subset\partial D^{n}$ such that the pair $(D^{n},D^{n-1})$
is PL-homeomorphic to the pair $(I^{n-1}\times I,I^{n-1}\times\{0\})$.} $D^{n-1}$ of $D^{n}$. Then we say there is an elementary collapse
of $X$ on $Y$ and write $X\Searrow Y$ (the collapse is \emph{across}
$D^{n}$ and \emph{onto} $D^{n-1}$, \emph{from} the complementary
face $\mathrm{cl}(\partial D^{n}\setminus D^{n-1})$). We say $X$
collapses on $Y$ and write $X\searrow Y$ if there is a sequence
of elementary collapses $X=X_{0}\Searrow X_{1}\Searrow\ldots\Searrow X_{n}=Y$.
\end{defn}

Note that this notion is invariant under PL homeomorphism. It is also
useful to work with specific cellulations - this is effective (in
the sense of being computable, for example) but less flexible.
\begin{defn}
Let $X$ be polyhedron with a cell decomposition satisfying that each
cell is a subpolyhedron, the attaching maps are embeddings, and the
boundary of each cell within $X$ is a union of lower-dimensional
cells. An $(n-1)$-face $\sigma\subset X$ is \emph{free} if it is
contained in a unique $n$-face $\tau$ (so there is an elementary
collapse from $\sigma$ across $\tau$).
\end{defn}

Given a cellulation of a polyhedron, we can perform sequences of one-cell
elementary collapses if the required faces are free.

\section{Graphs, cycle bases, and 2-complexes}

\label[section]{sec:cycle_bases}

We are interested in bounded-degree graphs $G$ in which there is
a basis for $H_{1}$ consisting of ``short'' cycles. It would be
convenient to work with graphs having a cycle basis with all cycles
of uniformly bounded length, but a slight generalization is useful.
\begin{defn}
Let $R$ be a ring. A class $\mathcal{C}$ of connected finite graphs
is \emph{short} (over $R$) if there exist $d,k,L\in\mathbb{N}$ such
that the following holds: all vertex degrees of graphs in $\mathcal{C}$
are at most $d$, and for each $G\in\mathcal{C}$ there exists a collection
$\left\{ \gamma_{i}\right\} _{i\in I}$ of cycles in $G$ satisfying:
\begin{enumerate}
\item $\left\{ \gamma_{i}\right\} _{i\in I}$ spans $H_{1}\left(G;R\right)$,
\item Each edge of $G$ participates in at most $k$ of the cycles,
\item The average length $\frac{1}{\left|I\right|}\sum_{i\in I}\ell\left(\gamma_{i}\right)$
of the cycles is at most $L$.
\end{enumerate}
Such a class $\mathcal{C}$ is \emph{uniformly} short over $R$ if
(3) is strengthened to: Each $\gamma_{i}$ satisfies $\ell(\gamma_{i})\le L$.
\end{defn}

\begin{rem}
\label[remark]{rem:bounded_incidence}If the graphs in $\mathcal{C}$
have no vertices of degree $1$ or $2$ then condition (3) is redundant:
it always holds with $L=2k$. To see this, consider pairs $(e,\gamma)$
of an edge $e\in E(G)$ and a cycle $\gamma\in\left\{ \gamma_{i}\right\} _{i\in I}$
such that $e\in\gamma$. The number of pairs equals $\sum_{i\in I}\ell\left(\gamma_{i}\right)$,
and by condition (2) we find
\[
\sum_{i\in I}\ell\left(\gamma_{i}\right)\le k\cdot\left|E\left(G\right)\right|.
\]
By condition (1) we have $\left|I\right|\ge\left|E\left(G\right)\right|-\left|V\left(G\right)\right|\overset{\star}{\ge}\frac{1}{2}\left|E\left(G\right)\right|$
and dividing by $\left|I\right|$ yields the result ($\star$ holds
because $2\left|E\left(G\right)\right|\ge3\left|V\left(G\right)\right|$).

In the other direction, for uniformly short classes condition (2)
follows automatically: since the vertex degrees are bounded, there
is a uniformly bounded number of cycles of length $\le L$ passing
through any given edge.
\end{rem}

\begin{rem}
There is a universal $L_{0}$ such that all short classes can be ``encoded''
in uniformly short $3$-regular graphs with cycle length bound $L=L_{0}$:
see \Cref{lem:uniformly_short_encoding}.
\end{rem}

\begin{prop}
\label[proposition]{prop:sphere_duals_short}Let $d\ge2$. The class
$\mathcal{C}$ of all dual graphs to combinatorially-triangulated
$d$-dimensional homology spheres over a ring $R$ is a short class
over $R$.

If $R$ is a PID the same holds without restricting to combinatorial
triangulations, i.e. for the class of all dual graphs to triangulated
$d$-dimensional homology spheres over $R$.
\end{prop}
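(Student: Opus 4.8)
The plan is to take for the cycles $\{\gamma_{i}\}$ the loops of $d$-simplices running around the $(d-2)$-simplices of $\Delta$, and to recognize these as the boundaries of the $2$-cells of the dual block decomposition of $\Delta$. Let $\Delta$ triangulate a $d$-dimensional $R$-homology sphere and let $G$ be its dual graph; $G$ is $(d+1)$-regular, so its vertex degrees are bounded by $d+1$ and, as $d\ge2$, it has no vertex of degree $1$ or $2$. For each $(d-2)$-simplex $\tau$ the $d$-simplices containing $\tau$ form a cycle $\gamma_{\tau}$ in $G$: these $d$-simplices correspond to the edges of $\mathrm{lk}(\tau)$, which is a circle --- in the combinatorial case because it is a PL $1$-sphere, and in general because it is a connected $1$-complex all of whose vertices have degree $2$ (the link of a $(d-1)$-simplex being an $R$-homology $S^{0}$, i.e.\ exactly two points) --- hence a cycle graph, whose cyclic order on edges gives the cyclic order on the $d$-simplices around $\tau$. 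I would take $\{\gamma_{\tau}\}_{\tau\in\Delta^{(d-2)}}$ as the required collection.

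Next I would verify the three conditions of the definition. Condition (2) is a count: the edge of $G$ dual to a $(d-1)$-simplex $\sigma$ lies on $\gamma_{\tau}$ precisely when $\tau\subset\sigma$, and $\sigma$ has exactly $d$ faces of dimension $d-2$, so each edge participates in exactly $k:=d$ of the cycles. (The individual $\gamma_{\tau}$ need not be short; their lengths equal the numbers of $d$-simplices around the various $\tau$, which are unbounded. This is harmless.) Granting (1) and (2), condition (3) then follows from \Cref{rem:bounded_incidence}, which applies because $G$ is $(d+1)$-regular with $d+1\ge3$, yielding $L=2d$. So the whole statement reduces to condition (1): that $\{\gamma_{\tau}\}$ spans $H_{1}(G;R)$.

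For (1), let $Y$ be the $2$-complex obtained from $G$ by attaching one $2$-cell along each $\gamma_{\tau}$; concretely, $Y$ is (a subdivision of) the $2$-skeleton of the dual block decomposition $\Delta^{\ast}$, realized as a subcomplex of $b(\Delta)$. Because $Z_{1}(G;R)=H_{1}(G;R)$ is the cycle space and $B_{1}(Y;R)$ is precisely the span of the $\gamma_{\tau}$, assertion (1) is equivalent to $H_{1}(Y;R)=0$. Set $K:=|\Delta^{\le d-3}|=|b(\Delta^{\le d-3})|$, a subpolyhedron of codimension $\ge3$ in $|\Delta|$. Since $|Y|$ is disjoint from $K$ and every simplex of $b(\Delta)$ is the join of a face lying in $b(\Delta^{\le d-3})$ with a face lying in $Y$, the space $|\Delta|\setminus K$ deformation retracts onto $|Y|$, so $H_{1}(Y;R)\cong H_{1}(|\Delta|\setminus K;R)$. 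I would finish by showing this vanishes: in the combinatorial case $|\Delta|$ is a PL $d$-manifold, deleting a subpolyhedron of codimension $\ge3$ changes neither $\pi_{1}$ nor $H_{1}$, and $H_{1}(|\Delta|;R)=H_{1}(S^{d};R)=0$ since $d\ge2$; in the general case, with $R$ a PID, one applies Alexander duality in the $R$-homology sphere $|\Delta|$ to get $\widetilde H_{1}(|\Delta|\setminus K;R)\cong\widetilde H^{\,d-2}(K;R)=0$ because $\dim K\le d-3$. Either way $H_{1}(Y;R)=0$, so $\{\gamma_{\tau}\}$ spans $H_{1}(G;R)$ and $\mathcal{C}$ is short over $R$ with all parameters depending only on $d$.

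The step I expect to be the main obstacle is this last one: identifying $Y$ with the dual $2$-skeleton and verifying that passing to it preserves $H_{1}$. For combinatorial triangulations this rests on the classical fact that the dual block decomposition is a regular CW structure on $|\Delta|$, whose cells of dimension $\ge3$ do not affect $H_{1}$; for arbitrary triangulations the dual ``blocks'' need not be cells, so one must argue through barycentric subdivisions together with the $R$-homology manifold hypothesis, and it is exactly in the duality input needed to kill $H_{1}$ of the complement of the codimension-$\ge3$ skeleton that the PID assumption enters. The remaining pieces --- the degree bound, the incidence count for (2), and the deduction of (3) from (1) and (2) --- are elementary.
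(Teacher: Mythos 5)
Your proof is correct and follows the paper's overall outline --- same cycles $\gamma_\tau$ indexed by $\Delta^{(d-2)}$, same appeal to \Cref{rem:bounded_incidence} to discharge condition (3) --- but the two proofs diverge on the key step, showing that the $\gamma_\tau$ span $H_1(G;R)$. The paper argues algebraically: it identifies the $\gamma_\tau$ with boundaries of $2$-cells of the dual cell complex $X$, asserts $H_1(X;R)\cong H^{d-1}(\Delta;R)$ via the dual block chain complex, and kills this group with the universal coefficient theorem (the PID hypothesis enters in exactly the Ext-term computation). You instead work topologically: attach the $2$-cells to form $Y$, realize $Y$ inside $b(\Delta)$ as the full subcomplex on barycenters of faces of dimension $\ge d-2$, deformation retract $|\Delta|\setminus K$ onto $|Y|$ (the standard ``push away from the opposite full subcomplex'' retraction), and then kill $H_1$ of the complement of the codimension-$\ge 3$ skeleton $K$ --- by general position in the PL case, and by Alexander duality in the $R$-homology sphere (which is where your PID hypothesis enters, via Poincar\'e--Lefschetz duality). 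Both routes are valid; yours avoids having to justify the identification of dual-block homology with simplicial cohomology for non-combinatorial triangulations, at the cost of invoking Alexander duality for homology spheres, whereas the paper's route is shorter to cite but sweeps that duality identification into one sentence. Your count for condition (2) is also cleaner than the paper's: you get the bound $d$ directly from the $d$ codimension-$2$ faces of a ridge, whereas the paper counts pairs of edges at a dual vertex and arrives at the same bound $d$.
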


\begin{proof}
Note that graphs in $\mathcal{C}$ are all $(d+1)$-regular. Let $\Delta$
be a combinatorial triangulation of a $d$-dimensional homology sphere
over $R$. The set of $d$-simplices containing a given $\left(d-2\right)$-simplex
of $\Delta$ forms a simple cycle in the dual graph $G$. Let $\left\{ \gamma_{i}\right\} _{i\in I}$
be the collection of all such cycles, where $I=\Delta^{(d-2)}$. These
are the boundaries of $2$-faces of the dual cell complex $X$ to
$\Delta$. 

If $\Delta$ is a combinatorial triangulation then $X$ is homeomorphic
to $\Delta$, and hence has no first homology over $R$. Otherwise,
the cohomology of $\Delta$ over $R$ in each degree $i$ is isomorphic
to the homology of $X$ in degree $d-i$. Assuming that $R$ is a
PID and applying the universal coefficient theorem for cohomology
gives the exact sequence
\[
0\rightarrow\mathrm{Ext}_{R}^{1}(H_{d-2}(\Delta;R),R)\rightarrow\underbrace{H^{d-1}(\Delta;R)}_{\simeq H_{1}(X;R)}\rightarrow\underbrace{\mathrm{Hom}_{R}(H_{d-1}(\Delta;R),R)}_{=\mathrm{Hom}_{R}(0,R)=0}\rightarrow0
\]
where if $d=2$ then the Ext on the left is $\mathrm{Ext}_{R}^{1}(R,R)=0$,
and otherwise it is $\mathrm{Ext}_{R}^{1}(0,R)=0$. This implies that
the dual cell complex $X$ has no first homology over $R$. Hence,
in either case $\left\{ \gamma_{i}\right\} _{i\in I}$ (boundaries
of the $2$-cells of $X$) span $H_{1}(G;R)$ (because $G\simeq X^{\le1}$
is the $1$-skeleton of $X$). By \Cref{rem:bounded_incidence} it
suffices to show that each edge of $G$ participates in boundedly
many of the cycles $\left\{ \gamma_{i}\right\} _{i\in I}$.

A pair of edges $\left\{ u,v\right\} $ and $\left\{ v,w\right\} $
of $G$ which meet at $v$ is contained in at most one (in fact exactly
one) of the cycles $\left\{ \gamma_{i}\right\} _{i\in I}$: the corresponding
triple $\sigma_{u},\sigma_{v},\sigma_{w}$ of $d$-simplices of $\Delta$
satisfy that each of $\sigma_{u}\cap\sigma_{v}$ and $\sigma_{v}\cap\sigma_{w}$
is a $(d-1$)-simplex, and these intersections are distinct (if they
are equal we obtain a $(d-1)$-simplex contained in each of $\sigma_{u}$,
$\sigma_{v},$and $\sigma_{w}$, but this is impossible in a triangulated
$d$-manifold). Hence an edge $\{u,v\}$ is contained in at most as
many of the cycles as the number of edges containing $v$ in $G$
which are not $\{u,v\}$. Since $G$ is $(d+1)$-regular, there are
exactly $d$ such edges. 
\end{proof}
A significant part of the rest of this paper is devoted to the details
of a construction going in the other direction: from a short class
of graphs to a class of triangulated $d$-dimensional homology spheres
(where we are free to choose $d\ge3$). That this construction can
be made injective, together with the fact that it only increases the
number of faces by a constant ratio, is the main idea behind \Cref{thm:triangulations_vs_short_graphs}.
There is a natural intermediate step, from graphs to nulhomologous
$2$-complexes.
\begin{lem}
For each $m\ge3$ there exists a triangulation of the closed $2$-disk
such that:
\begin{itemize}
\item The boundary is a cycle of length $m$, and is an induced subcomplex,
\item Each vertex has degree at most $6$,
\item There are at most $m$ interior vertices, and each is connected to a boundary vertex by an edge.
\end{itemize}
\end{lem}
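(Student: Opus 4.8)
The plan is to build the required triangulation from a low-degree triangulation of the $m$-gon that has no interior vertices, by subdividing all of its diagonals. The case $m=3$ is handled separately (and, if one wishes, so are all $m\le 6$): take $\Delta'$ to be the cone over the boundary cycle from a single new vertex. This is an induced subcomplex because the filling $2$-simplex is absent, it has one interior vertex joined to every boundary vertex, and its degrees are all at most $\max(m,3)\le 6$.

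So let $m\ge 4$, and let $\Delta$ be the \emph{snake} triangulation of the $m$-gon obtained by repeatedly removing ears, alternating between the two ends of the current subpolygon; it has $m-2$ triangles and $m-3$ diagonals, its dual graph is a path, and every vertex lies in at most three of its triangles, so its maximum degree is at most $4$. Let $\Delta'$ be obtained from $\Delta$ by stellar subdividing its diagonals at new vertices (their \emph{midpoints}), one at a time, in their natural order $c_1,\dots,c_{m-3}$ along the dual path. Three of the required properties are then immediate: the boundary is still the $m$-cycle, and it is now an induced subcomplex since every diagonal of $\Delta$ has been destroyed while every non-boundary edge of $\Delta'$ has a midpoint among its endpoints; the interior vertices of $\Delta'$ are exactly the $m-3\le m$ midpoints; and the midpoint of a diagonal $\{a,b\}$ is joined to the boundary vertices $a$ and $b$.

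The remaining point is that $\Delta'$ has maximum degree at most $6$, and this is where the real work lies. For an original vertex $v$, subdividing a diagonal incident to $v$ does not change $\deg(v)$, while subdividing a diagonal $e$ with $v\ast e$ a triangle increases it by one, so $\deg_{\Delta'}(v)=\deg_\Delta(v)+\#\{\text{diagonals opposite to }v\}$. The key observation is that in a snake triangulation at most two diagonals are opposite to a given vertex: the triangles containing $v$ form a fan which is a connected subpath of the (path-shaped) dual graph, and for every non-extreme triangle of this fan the two edges through $v$ are interior, hence — since each triangle of a snake has a boundary edge — its edge opposite $v$ lies on the boundary; thus only the two extreme triangles of the fan can contribute a diagonal. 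Therefore $\deg_{\Delta'}(v)\le 4+2=6$. For a midpoint $z$ of a diagonal $e$, $z$ is joined to the two endpoints of $e$ and, inside each of the two triangles of $\Delta$ that contained $e$, to the remaining vertices of the cell it ends up in; such a triangle contributes only one new neighbour of $z$ unless both of its diagonals were subdivided (so that a pentagon appears) and it got retriangulated as the fan from $z$, in which case it contributes two. Processing the diagonals in order along the dual path makes $z$ the fan-apex of at most one such pentagon, so $\deg_{\Delta'}(z)\le 2+2+1=5$.

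The main obstacle is thus the degree estimate, whose crux is the fact that a snake triangulation is opposite to at most two diagonals at each vertex, together with the small amount of bookkeeping needed to keep the midpoints' degrees bounded; checking that $\Delta'$ is a triangulated disk with the prescribed boundary is routine, since stellar subdivisions of edges preserve the PL type.
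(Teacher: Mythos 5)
Your proposal is correct and follows essentially the same route as the paper: start from the zigzag (``snake'') triangulation of the $m$-gon, whose dual graph is a path, and stellar subdivide every interior edge. The paper's proof is a two-sentence sketch supported by figures, whereas you supply the actual verification of the degree bound. Your key observation --- that in the snake a vertex $v$ is opposite at most two diagonals because every non-extreme triangle of the fan around $v$ already has two interior edges through $v$, forcing its third edge to the boundary --- is exactly the reason the construction works, and the paper does not spell it out. Two minor remarks. First, your bound $\deg_{\Delta'}(z)\le 5$ for a midpoint $z$ does rely on the processing order (a different order, e.g.\ subdividing $e_i$ before both of its neighbours, can yield degree $6$), so the paper's comment that ``the order is inessential'' should be read as ``inessential for obtaining the bound $\le 6$,'' not for the exact degrees; your dual-path order is cleaner but not strictly needed. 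Second, your count of $m-3$ interior vertices is tighter than the stated $\le m$, so all requirements of the lemma are met.
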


\begin{proof}
Begin with the zigzag triangulations of \cite[p.3]{flip_graphs_of_bounded_degree_triangulations}. For $m=8$ and $m=13$, they are:

\vspace{1ex}\includegraphics[width=0.35\textwidth]{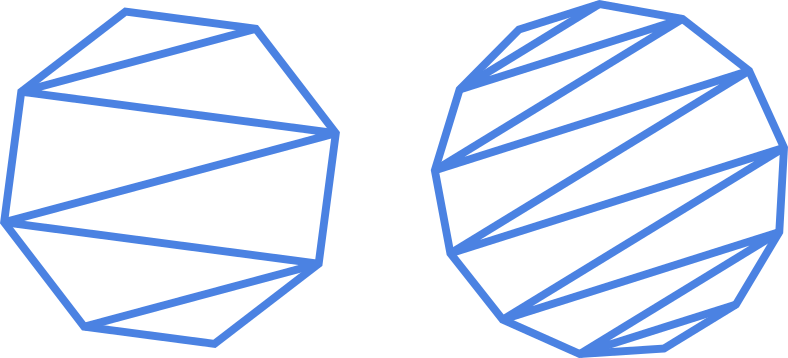}

Then take a stellar subdivision at the center of each interior edge:

\vspace{1ex}\includegraphics[width=0.35\textwidth]{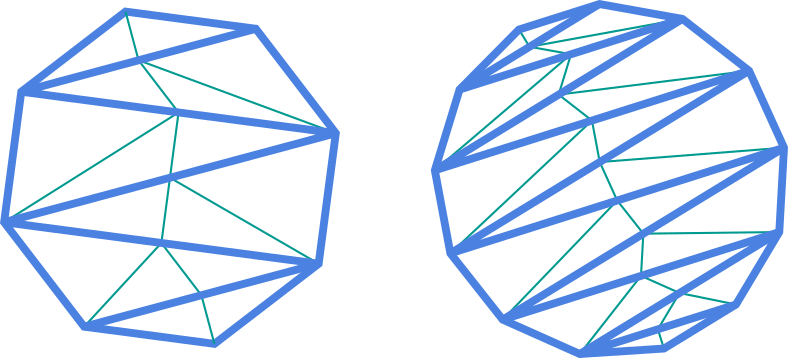}

(The order in which these subdivisions are preformed is inessential, but here we have subdivided starting from the topmost interior edge, in vertical order.)
\end{proof}
\begin{prop}
\label[proposition]{prop:graphs_to_2_complexes}Let $\mathcal{C}$
be a class of short graphs over a field $\mathbb{F}$ with constants
$d,k,L$ as above. Then there is a class $\mathcal{D}$ of connected
$2$-dimensional simplicial complexes and an integer $d'$ such that
for all $\Delta\in\mathcal{D}$ we have $H_{1}(\Delta;\mathbb{F})=H_{2}(\Delta;\mathbb{F})=0$,
and all vertices of $\Delta$ have degree at most $d'$. Further,
there exists a function $f:\mathcal{C}\rightarrow\mathcal{D}$, injective on isomorphism classes,
such that if $f(G)=\Delta$ then:
\begin{itemize}
\item $G$ is isomorphic to a subgraph of $\Delta^{\le1}$,
\item Every vertex of $\Delta$ is connected by an edge to a vertex in the
image of $G$. In particular, if $G$ has $n$ vertices then $\Delta$
has at most $d'\cdot n$ vertices.
\item If $\mathcal{C}$ is uniformly short, then $G$ is quasi-isometric
to $\Delta^{\le1}$ with quasi-isometry constants depending only on
$d,k,L$.
\end{itemize}
\end{prop}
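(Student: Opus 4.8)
\emph{Plan.}
The construction attaches one triangulated disk along each cycle of a carefully chosen subfamily, and then hangs a tiny ``marker'' off every vertex of $G$ so that the input can be recovered from the output. First I would replace the given spanning family $\{\gamma_i\}_{i\in I}$ by a subfamily $\{\gamma_i\}_{i\in J}$, $J\subseteq I$, that is a \emph{basis} of $H_1(G;\mathbb F)$ (a maximal linearly independent subfamily); deleting cycles only improves the hypotheses that each edge lies on at most $k$ of them and (as in \Cref{rem:bounded_incidence}) that $\sum_{i\in J}\ell(\gamma_i)\le k\,|E(G)|$. Passing to a basis is not cosmetic: were the attached cycles linearly dependent, capping them with disks would create $2$-dimensional homology. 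For each $i\in J$ take the triangulated disk $D_i$ provided by the preceding lemma with boundary length $m=\ell(\gamma_i)$ (so $m\ge 3$, all degrees $\le 6$, every interior vertex joined to a boundary vertex), give $D_i$ fresh interior vertices, and glue $\partial D_i$ to $\gamma_i$ by a simplicial isomorphism; call the result $\Delta_0$. Finally attach, at each vertex $v$ of the image of $G$, two pendant edges $\{v,x_v\}$ and $\{v,y_v\}$ to new degree-one vertices; the result is $\Delta=f(G)$, and $\mathcal D$ is the class of all $\Delta$ so produced. If $H_1(G;\mathbb F)=0$, attach in addition one $2$-simplex along an edge of $G$ as a collapsible flap, so that $\Delta$ is genuinely $2$-dimensional.

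\emph{$\Delta$ is a complex, with the right homology, degrees, and size.}
Checking that $\Delta_0$ is a simplicial complex is routine: the $\gamma_i$ are distinct simple cycles, each $\partial D_i$ is induced in $D_i$, and the $D_i$ have disjoint interiors, disjoint from $G$; two $2$-simplices coinciding would need three common vertices, which is impossible once one notes that for $m\ge 4$ every triangle of $D_i$ carries a (fresh) subdivided-diagonal vertex, while for $m=3$ the cycle and its disk are pinned down by three vertices. For homology, give $\Delta_0$ the CW structure in which each $D_i$ is a single $2$-cell; then $\partial_2$ sends $D_i$ to $\gamma_i\in Z_1(G)=H_1(G;\mathbb F)\subseteq C_1$, and since the $\gamma_i$ are linearly independent there, these boundary chains are linearly independent, so $\partial_2$ is injective and $H_2(\Delta_0;\mathbb F)=0$, while $H_1(\Delta_0;\mathbb F)=H_1(G;\mathbb F)/\langle\gamma_i:i\in J\rangle=0$; the pendants and flap change nothing. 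For degrees: a vertex $v$ of $G$ lies on at most $\tfrac12 dk$ of the $\gamma_i$ (a cycle through $v$ uses two of its $\le d$ edges, each on $\le k$ cycles), each contributing at most $\deg_{D_i}(v)-2\le 4$ new edges at $v$ (to fresh interior vertices), so $\deg_\Delta(v)\le d+2dk+2$; interior disk vertices have degree $\le 6$ and pendant tips degree $1$. Since each interior vertex of $D_i$ is adjacent to a boundary vertex of $D_i$ (a vertex of the image of $G$) and each $x_v,y_v$ is adjacent to $v$, every vertex of $\Delta$ is adjacent to the image of $G$; hence $|V(\Delta)|\le (1+d')\,|V(G)|$ for a suitable $d'=O(dk)$ that also bounds all degrees.

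\emph{Injectivity.}
The image of $V(G)$ in $\Delta$ is exactly the set of vertices with at least two neighbors of degree $1$: every $v\in V(G)$ has its two pendant tips $x_v,y_v$, whereas a pendant tip has only one neighbor and every interior disk vertex, as well as every vertex of $G$, has all its neighbors of degree $\ge 2$. The image of $E(G)$ is then exactly the set of edges of $\Delta^{\le 1}$ joining two such ``marked'' vertices: edges of $G$ qualify, and conversely such an edge is not a pendant edge, and if it lies in some $D_i$ then --- this is where one uses that the diagonals of $D_i$ were stellarly subdivided --- having both endpoints on $\partial D_i$ forces it to be a boundary edge of $D_i$, hence an edge of $\gamma_i\subseteq G$. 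Thus a simplicial isomorphism $f(G_1)\cong f(G_2)$ carries marked vertices to marked vertices and the edges between them to the edges between them, hence induces $G_1\cong G_2$; so $f$ is injective on isomorphism classes. (The degenerate cases where $G$ has at most one vertex are excluded or handled separately.)

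\emph{Quasi-isometry, and the main obstacle.}
When $\mathcal C$ is uniformly short every $\gamma_i$ has length $\le L$ and the pendants have length $1$. Apply \Cref{lem:quasi_isometric_relations} to the vertex sets of $G$ and $\Delta^{\le 1}$ (both $(1,1)$-quasi-geodesic as connected graphs) with the relation $vRw\iff v\in V(G)$ and $d_{\Delta^{\le 1}}(v,w)\le 1$: the nonemptiness clauses hold because every vertex of $\Delta^{\le 1}$ is within distance $1$ of the image of $G$, and the closeness clauses hold with $M=O(L)$ because a shortest $\Delta^{\le 1}$-path of length $\le 3$ between two vertices of $G$ avoids the pendants (degree-one tips) and breaks into at most three maximal pieces, each an edge of $G$ or a piece running inside a single $D_i$ with both ends on $\gamma_i$, and any two vertices of $\gamma_i$ are joined inside $\gamma_i\subseteq G$ by a path of length $\le L/2$. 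The lemma then produces the quasi-isometry with constants depending only on $L$. The genuinely delicate points of the whole argument are (i) attaching disks along a \emph{linearly independent} family of cycles, without which $H_2$ would not vanish, and (ii) designing the marker so that $f$ is forced to be injective while still respecting the bounded degree, the linear size bound, and the requirement that every vertex of $\Delta$ be adjacent to the image of $G$ --- the two-pendant-edge gadget is what reconciles these; the verification that $\Delta$ is a bona fide simplicial complex is routine but requires the bookkeeping indicated above.
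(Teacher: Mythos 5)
Your proof is correct and follows the same overall strategy as the paper's: choose a linearly independent subfamily of the cycles, cap each with a bounded-degree triangulated disk from the preceding lemma, and then attach a small local gadget so that the input graph can be reconstructed from the output complex. Two tactical choices differ from the paper. First, you prove $H_2(\Delta_0;\mathbb{F})=0$ directly from the cellular chain complex (the disk boundaries are linearly independent $1$-cycles, so $\partial_2$ is injective); the paper instead inducts with Mayer--Vietoris. Both are fine, and yours is more compact. Second, your injectivity gadget hangs two pendant edges at each vertex of $G$ and then recovers $G$ as the vertices with at least two degree-$1$ neighbors together with the edges between them, whereas the paper appends $d(1+6k)$ extra triangles to each \emph{edge} of $G$ and recovers $G$ as the set of edges lying in at least $d(1+6k)$ triangles. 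Your gadget is smaller, but because it is $1$-dimensional you need the extra ``collapsible flap'' to force $\dim\Delta=2$ when $H_1(G;\mathbb{F})=0$, whereas the paper's triangle gadget makes $\Delta$ automatically $2$-dimensional (at the cost of worse constants). One slip in the write-up: the clause ``every interior disk vertex, as well as every vertex of $G$, has all its neighbors of degree $\ge 2$'' is literally false for vertices of $G$, since their pendant tips have degree $1$. The intended (and correct) content is that pendant tips and interior disk vertices have \emph{no} degree-$1$ neighbors, which, together with the first observation, gives the characterization of the image of $V(G)$. The degree bounds, the size bound, and the application of \Cref{lem:quasi_isometric_relations} are all sound.
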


Injectivity here is achieved by a somewhat ad-hoc process, but morally
this is similar to the coloring arguments we use elsewhere.
\begin{proof}
Given $G\in\mathcal{C}$ with cycles $\left\{ \gamma_{i}\right\} _{i\in I}$,
construct $\Delta=f(G)$ as follows. First choose $J\subset I$ such
that $\left\{ \gamma_{j}\right\} _{j\in J}$ is a basis of $H_{1}(G;\mathbb{F})$.
For each $\gamma_{j}$ glue to $G$ a triangulation of the $2$-disk
with boundary $\gamma_{j}$ as given by the previous lemma. Denote
the resulting complex by $\Delta'$.

It is straightforward that $H_{1}(\Delta';\mathbb{F})=0$. We prove
that $\Delta'$ has $H_{2}(\Delta';\mathbb{F})=0$: without loss of
generality $J=\{1,2,\ldots,t\}$ and the corresponding $2$-disks
are $D_{1},\ldots,D_{t}$. We glue the disks one at a time starting
from $D_{1}$ and prove the claim by induction on $G\cup\bigcup_{j\le i}D_{j}$,
where $G\cup\bigcup_{j\le t}D_{j}=\Delta'$. For the gluing of $D_{i}$
we have a Mayer--Vietoris sequence (with coefficients in $\mathbb{F}$,
which we omit from the notation):
\[
H_{2}\left(\gamma_{i}\right)=0\rightarrow H_{2}\left(G\cup\bigcup_{j<i}D_{j}\right)\oplus H_{2}\left(D_{i}\right)\rightarrow H_{2}\left(G\cup\bigcup_{j\le i}D_{j}\right)\rightarrow
\]
\[
H_{1}\left(\gamma_{i}\right)\rightarrow H_{1}\left(G\cup\bigcup_{j<i}D_{j}\right)\oplus H_{1}\left(D_{i}\right)\rightarrow\ldots.
\]
Since the cycles $\left\{ \gamma_{j}\right\} _{j\in J}$ form an $\mathbb{F}$-basis
of $H_{1}(G)$ and gluing $D_{1},\ldots,D_{i-1}$ has the effect on
$H_{1}$ of annihilating precisely the subspace spanned by $\gamma_{1},\ldots,\gamma_{i-1}$,
the map $H_{1}\left(\gamma_{i}\right)\rightarrow H_{1}\left(G\cup\bigcup_{j<i}D_{j}\right)$
(induced from the inclusion) is injective. It follows that the boundary
map $H_{2}\left(G\cup\bigcup_{j\le i}D_{j}\right)\rightarrow H_{1}\left(\gamma_{i}\right)$
is $0$. By induction $H_{2}\left(G\cup\bigcup_{j<i}D_{j}\right)=0$,
and hence also $H_{2}\left(G\cup\bigcup_{j<i}D_{j}\right)\oplus H_{2}\left(D_{i}\right)=0$.
So part of the sequence reads 
\[
0\rightarrow H_{2}\left(G\cup\bigcup_{j\le i}D_{j}\right)\rightarrow H_{1}\left(\gamma_{i}\right),
\]
where both maps are $0$, and therefore $H_{2}\left(G\cup\bigcup_{j\le i}D_{j}\right)=0$. 

The vertices of $\Delta'$ are the vertices of $G$, together with 
the interior vertices of the triangulated disks produced by the previous lemma. Its vertex
degrees can be bounded by $d(1+6k)$: each vertex not in $G$ has degree at most $6$ by construction.
To bound degrees of vertices in $G$, observe that each edge of $G$ is contained
in at most $k$ cycles of $\left\{ \gamma_{i}\right\} _{i\in I}$,
and hence a vertex $v$ of $G$ is contained in at most $dk$ of these
cycles. For each such cycle we have added at most $6$ edges to $v$
(here $6$ is the degree bound on the disk triangulations of the previous
lemma). In particular, each edge of $\Delta'$ participates in at
most $d(1+6k)-1$ triangles: if $e\in\Delta'$ is an edge and $v$
is one of its vertices, each triangle $\tau$ containing $e$ must
contain an additional edge $\{v,u\}$ ($u\notin e$) and this edge
identifies $\tau$ uniquely.

We construct $\Delta$ from $\Delta'$ as follows: for each edge $e=\{v,w\}$
of $G\subset\Delta'$ add $d(1+6k)$ new vertices $u_{1},\ldots,u_{d(1+4k)}$,
connect each of them to both $v$ and $w$, and add the triangles
$\{u_{i},v,w\}$ for $i=1,\ldots,d(1+6k)$. Note that each of the
new vertices is contained in precisely one triangle. Hence $G$ can
be identified within $\Delta$ as follows: it is the subgraph of the
$1$-skeleton containing all edges that participate in at least $d(1+6k)$
triangles, together with all these edges' vertices. Note further that
$\tilde{H}_{*}(\Delta;\mathbb{F})=0$, because this holds for $\Delta'$
(which is a deformation retract).

Define $\mathcal{D}$ to be the class of all complexes $\Delta$ obtained
from graphs $G\in\mathcal{C}$ as above, and define $f:\mathcal{C}\rightarrow\mathcal{D}$
by setting $f\left(G\right)$ to be the corresponding complex $\Delta$.

It remains to prove that if $\mathcal{C}$ is uniformly short then
each $G\in\mathcal{C}$ is quasi-isometric to $f(G)$, with constants
depending only on $\mathcal{C}$. Let $G\in\mathcal{C}$ and let $\Delta\supset G$
be the corresponding complex. Then the embedding $\iota:G\rightarrow\Delta^{\le1}$
is a quasi-isometry as desired: each vertex of $\Delta^{\le1}$ is
adjacent to a vertex of $G$. For any vertices $v,v'$ of $G$ we
have $d(\iota(v),\iota(v'))\le d(v,v')$. Further, if $w_{0}=\iota(v),w_{1},\ldots,w_{n}=\iota(v')$
is a path of minimal length in $\Delta^{\le1}$,
and $w_{i_0}=\iota(v),w_{i_1},\ldots,w_{i_t}=\iota(v')$ are its vertices in the image of $G$,
(where $0 =i_0 \le i_1 \le \ldots \le i_t =n$,)
then the distance in $G$ between each successive pair $v_{i_j}, v_{i_{j+1}}$ is at most $L$,
because any such pair is contained within the same cycle $\gamma_i$.
To see this, note that a connected component of the complement of $G$ within $\Delta^{\le 1}$ is either a set of the interior vertices
in a disk filling a cycle $\gamma_i$, as produced by the previous lemma, or a single vertex in $\Delta^{(0)} \setminus \Delta'^{(0)}$. In the latter case a path through the vertex is never a geodesic, because such a vertex is connected to precisely two vertices of $G$, which are already adjacent in $G$.
Hence $d(v,v')\le L\cdot n\le L\cdot d(\iota(v),\iota(v'))$.
\end{proof}

\section{Handle structures with controlled triangulations}

\label[section]{sec:handle_structures}

We describe a general ``thickening'' construction that takes $2$-dimensional
simplicial complexes $\Delta$ as input and outputs $d$-dimensional
($d\ge4$) combinatorial manifolds $M(\Delta)$ with boundary. The
results of this section can be summarized as follows:
\begin{thm}
\label[theorem]{thm:handle_construction}Fix $d,k\in\mathbb{N}$ with
$d\ge4$. Then for every $2$-dimensional simplicial complex $\Delta$
in which all vertex stars have at most $k$ faces there exists a PL
$d$-manifold $M(\Delta)$ with boundary such that:
\begin{enumerate}
\item $M(\Delta)$ has a handle structure consisting of $0$-, $1$-, and
$2$-handles in bijection with the faces of $\Delta$:
\[
M(\Delta)=\bigcup_{v\in\Delta^{(0)}}H_{0,v}\cup\bigcup_{e\in\Delta^{(1)}}H_{1,e}\cup\bigcup_{\sigma\in\Delta^{(2)}}H_{2,\sigma}.
\]
Further, $H_{i,\sigma}\cap H_{j,\tau}\neq\emptyset$ if and only if
$\sigma\cap\tau\neq\emptyset$ in $\Delta$.
\item There is a subpolyhedron $\Delta'\subset M(\Delta)$ such that $\Delta\simeq\Delta'$
are PL homeomorphic, and if $\Delta$ is finite then $M(\Delta)\searrow\Delta'$.
\item $M(\Delta)$ has a triangulation in which each handle $H_{i,\sigma}$
is a subcomplex. The total number of faces in each such subcomplex
is uniformly bounded in terms of $d,k$ (and in particular is independent
of $\Delta$).
\end{enumerate}
\end{thm}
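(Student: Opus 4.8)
The plan is to construct $M(\Delta)$ by a direct handle-by-handle assembly, taking a fixed "model" handle for each vertex, edge, and $2$-simplex of $\Delta$ and gluing them according to the incidence pattern of $\Delta$. For a vertex $v$, the $0$-handle $H_{0,v}$ is a standard $d$-cube $I^d$; for an edge $e=\{v,w\}$, the $1$-handle $H_{1,e}\cong I^1\times I^{d-1}$ is attached along two disjoint disks in $\partial H_{0,v}$ and $\partial H_{0,w}$; for a $2$-simplex $\sigma$, the $2$-handle $H_{2,\sigma}\cong I^2\times I^{d-2}$ is attached along a neighborhood of a circle in the boundary of the union of the $0$- and $1$-handles corresponding to $\sigma$'s vertices and edges. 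The circle is the "boundary" of $\sigma$ traced through the three $0$-handles and three $1$-handles, and here is where Fact~\ref{fact:2_core_extension} enters: since $d\ge 4$, a regular neighborhood of this circle is automatically orientable (one has enough room, and the three $1$-handles can all be attached in an "untwisted" way), so the attaching map of the $2$-handle extends as required. The condition on vertex stars having at most $k$ faces guarantees that a bounded number of $1$-handles and $2$-handles must attach to each $0$-handle, so there is enough room on $\partial I^d$ to place all the required attaching disks disjointly, and all of this can be done using only finitely many combinatorial patterns (depending on $d,k$).

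Part (1) is then immediate from the construction: $H_{i,\sigma}\cap H_{j,\tau}\ne\emptyset$ precisely when they share a $0$-handle, which happens exactly when $\sigma\cap\tau\ne\emptyset$. For part (2), I would exhibit $\Delta'$ as the union of the cores: the core of $H_{0,v}$ is a point, the core of $H_{1,e}$ is an arc, and the core of $H_{2,\sigma}$ is a $2$-disk, and these glue up (via the cocores / the standard collapsing structure of a handle onto its core together with lower-index handles) to a polyhedron PL homeomorphic to $\Delta$. The collapse $M(\Delta)\searrow\Delta'$ is the standard fact that a handle collapses onto the union of its core with the part of the boundary where it is attached; performing these collapses in order of decreasing index (first the complementary cocore directions of the $2$-handles, then the $1$-handles, then the $0$-handles onto the cores) gives $M(\Delta)\searrow\Delta'$ when $\Delta$ is finite. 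One must check the collapses are compatible across handles, but this is local and handled by the regular-neighborhood description of attaching regions recalled in Section~\ref{sec:handle_decomposition_prelims}.

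For part (3), the key point is \emph{uniformity}: I would fix, once and for all (depending only on $d$ and $k$), a triangulated model of $I^d$ together with a triangulated model for each of the finitely many combinatorial types of attaching configuration (how many $1$-handles and $2$-handles attach, and where their disks sit on the boundary cube). Using a common subdivision of the boundary of the $0$-handle model that is compatible with all the attaching disks that could ever appear, one triangulates each model handle so that the triangulations agree on overlaps; since there are only finitely many models and each is a fixed finite complex, the number of faces in each $H_{i,\sigma}$ is bounded in terms of $d,k$ alone. Assembling these gives a global triangulation of $M(\Delta)$ in which every handle is a subcomplex with a uniformly bounded number of faces.

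The main obstacle is part (3) done \emph{coherently}: it is easy to triangulate each handle in isolation, but the triangulations of adjacent handles must match along their common faces, and a single $0$-handle may be met by up to $k$ different $1$- and $2$-handles whose attaching disks overlap in a complicated pattern on $\partial I^d$. The resolution is to pin down a finite list of "local models" up to combinatorial equivalence — this is where the star-size bound $k$ is essential — and to choose, for each local model of a $0$-handle, a triangulation of $\partial I^d$ refining the given arrangement of attaching disks, then extend inward; because the list is finite, all choices can be made simultaneously and consistently, and the uniform bound on face counts follows.
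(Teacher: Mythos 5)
Your overall plan matches the paper's: fixed reference $0$- and $1$-handles glued by the incidence pattern of $\Delta$, then $2$-handles attached along circles in the boundary, then a collapse onto a copy of $\Delta$, then a uniform triangulation via finitely many local models. However, two of your intermediate claims are wrong as stated, and each hides a step the paper treats with care.

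First, a regular neighborhood of the attaching circle is \emph{not} "automatically orientable since $d\ge4$": in a nonorientable boundary, a circle can perfectly well have a twisted neighborhood, in any dimension. Orientability has to be \emph{arranged}: the paper explicitly requires each $1$-handle $H_{1,e}$ and the $0$-handle $H_{0,v}$ it meets to induce opposite orientations on the identified disk $\Sigma\times\{0\}\simeq\Sigma_i$, which forces $M_1$ (and hence $\partial M_1$) to be orientable, and only then does \Cref{fact:2_core_extension} apply. The role of $d\ge4$ is different: it is used so that, inside the $(d-1)$-sphere $\partial H_0$, the finitely many paths $p_t$ through a $0$-handle can be chosen pairwise disjoint (a $2$-sphere does not have enough room for arbitrary disjoint chords). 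Your parenthetical about attaching the $1$-handles "untwisted" gestures at the right fix, but the sentence as written is a false statement, not a shorthand.

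Second, $\Delta'$ cannot be "the union of the cores." The cores $C(v)=\{c_0\}$, $C(e)=c_1\times I$, and $C(\sigma)=I^2\times\{pt\}$ sit in the \emph{interiors} of distinct handles and are pairwise disjoint, so their union is a discrete collection of points, arcs, and disks, not a copy of $\Delta$. The paper's $\Delta'$ is the union of the cores together with explicit \emph{flaps} $F_{\sigma,\tau}$: for each inclusion $\tau\subsetneq\sigma$, a subpolyhedron of the handle $H_{\dim\tau,\tau}$ that joins the core of the $\sigma$-handle to the core of the $\tau$-handle. Constructing these flaps (so that $D_\sigma=C(\sigma)\cup\bigcup_{\tau\subsetneq\sigma}F_{\sigma,\tau}$ is a disk with the right boundary, and that the $D_\sigma$'s have pairwise disjoint interiors) is the substance of Section 4.2, and the collapse $M(\Delta)\searrow\Delta'$ in Section 4.3 is then a handle-by-handle application of \Cref{lem:handle_collapse} and \Cref{lem:extending_collapse}, which must be set up so that the flaps survive the collapse. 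Your phrase "glue up via the cocores" does not describe this mechanism — cocores point in the complementary $(d-k)$-directions and play no role in connecting the cores to each other. Your treatment of part (3) is a fair summary of the paper's strategy (symmetric reference triangulations, barycentric subdivisions, a finite palette of local models, Armstrong's theorem to fill in the $2$-handles), so that part stands.
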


The construction of $M(\Delta)$ from $\Delta$ involves many arbitrary
choices, but it can be made computable and explicit bounds for (3)
can be found.

The general idea is to replace each $i$-simplex in $\Delta$ by a
$d$-dimensional $i$-handle, and to glue these handles in the ``same''
manner as the faces of $\Delta$ (although this involves choices that
affect the PL homeomorphism type nontrivially). We first carry out
the construction in the PL category, then prove the wanted results
on collapsibility, and finally find suitable triangulations.

In \Cref{sec:boundary} we use part (2) to obtain conclusions on $H_{*}(\partial M)$.
Part (3) implies that the dual graph of the induced triangulation
of $\partial M$ is quasi-isometric to $\Delta$, with quasi-isometry
constants depending only on $d$ and $k$: we return to this point
in the proof of \Cref{thm:complexes_to_manifolds}.

\subsection{{\texorpdfstring{Construction of $M\left(\Delta\right)$}{Construction of M(Δ)}}}

\label[section]{sec:constructing_M_Delta}

It is useful for us to mark certain subpolyhedra on our $0$- and
$1$-handles: all gluing will take place along these subpolyhedra.
Having notation for them facilitates finding small triangulations,
because it allows us to work with fixed triangulations of the handles
(depending on the vertex degree bound $k$ but not on $\Delta$). 

\subsubsection{Constructing the $0$- and $1$-handles}

Fix an oriented $(d+1)$-disk $H_{0}$ (a ``reference $0$-handle'')
together with $k$ points $\{p_{1},\ldots,p_{k}\}$ on $\partial H_{0}$.
Take pairwise disjoint regular neighborhoods $\Sigma_{i}$ of the
points $p_{i}$ ($1\le i\le k$) in $\partial H_0$, and fix $k$ points in $\partial\Sigma_{i}$
for each $i$: call these $q_{i,1},\ldots,q_{i,k}$.

Fix a collection of pairwise disjoint paths $\{p_{t}\}_{t}$ within
$\partial H_{0}$ and disjoint from the interiors of $\Sigma_{1},\ldots,\Sigma_{k}$,
one path between each pair $q_{i_{1},j_{1}},q_{i_{2},j_{2}}$ where
$i_{1}\neq i_{2}$. Note that $\partial H_{0}$ is a $d$-sphere with
$d\ge3$, so such paths exist.

Now fix a $d$-disk $\Sigma$ together with $k$ marked points $\{u_{1},\ldots,u_{k}\}$
in $\partial\Sigma$, and define $H_{1}=\Sigma\times I$. Choose an
orientation for $H_{1}$ (our ``reference $1$-handle'').

\subsubsection{Gluing $0$- and $1$-handles}

Given an input complex $\Delta$ in which all vertex stars have at
most $k$ faces, let $(V,E)$ be the $1$-skeleton $\Delta^{\le1}$.
Take a collection of copies of $H_{0}$ indexed by $V$ and a collection
of copies of $H_{1}$ indexed by $E$: denote these by $\left\{ H_{0,v}\right\} _{v\in V}$
and $\left\{ H_{1,e}\right\} _{e\in E}$ respectively.

For each $H_{1,e}$ we want to glue each of the two ``ends'' $\Sigma\times0$
and $\Sigma\times1$ to some $\Sigma_{i}$ ($1\le i\le k$) in the
appropriate $H_{0,v}$. These should be chosen such that no two $1$-handles
are glued to the same $\Sigma_{i}\subset H_{0,v}$ for any $v$. An
arbitrary choice works (and is always possible): first, for each edge
$e=\{v,w\}$ choose an arbitrary orientation $(v,w)$. Then for each
vertex $v$ order the adjacent edges as $e_{1},\ldots,e_{s}$ (where
$s\le k$). For each $1\le i\le s$, if $v$ is the initial vertex
of the oriented edge $e_{i}$ (i.e. $e_{i}=(v,w)$ for some $w$)
we glue $\Sigma\times0\subset H_{1,e}$ to $\Sigma_{i}$; otherwise
$v$ is the terminal vertex of $e_{i}$, and we glue $\Sigma\times1\subset H_{1,e}$
to $\Sigma_{i}$. We arrange that $H_{1,e}$ and $H_{0,v}$ induce
opposite orientations on the identified disk $\Sigma\times0\simeq\Sigma_{i}$
(or $\Sigma\times1\simeq\Sigma_{i}$). This ensures that the gluing
is orientable. The gluing maps are chosen such that $\{u_{1},\ldots,u_{k}\}\subset\Sigma$
is glued to $\{q_{i,1}\ldots,q_{i,k}\}\subset\Sigma_{i}$, and we
relabel the points in $\Sigma_{i}\subset H_{0,v}$ so that the image
of each $u_{j}$ is $q_{i,j}$ (it is possible to arrange directly
for $u_{j}$ to be glued to $q_{i,j}$ without any relabeling, because
the group of PL orientation-preserving homeomorphisms of the sphere
of each dimension $\ge2$ is $k$-transitive for all $k$. But we
will not do this in the triangulations we construct). 

We denote the resulting manifold with boundary by $M_{1}$.

\subsubsection{Gluing $2$-handles}

For each triangle $\sigma=\{u,v,w\}\in\Delta^{(2)}$ we construct
a closed path $\gamma_{\sigma}$ in $\partial M_{1}$ traversing the
handles corresponding to $u,\{u,v\},v,\{v,w\},w,\{w,u\}$ in cyclic
order. The part of $\gamma_{\sigma}$ within $\partial H_{0,v}$ is
one of the paths $p_{t}$ constructed above (in the construction of
subpolyhedra of $H_{0}$) and similarly for $H_{0,w}$ and $H_{0,u}$.
For each edge $e$ of the triangle, the part of $\gamma_{\sigma}$
within $\partial H_{1,e}$ is of the form $u_{j}\times I$ for a certain
$u_{j}\in\partial\Sigma$. These choices need to be made consistently,
so that $(u_{j},0)\in\partial(\Sigma\times0)$ and $(u_{j},1)\in\partial(\Sigma\times1)$
are the endpoints of the appropriate paths $p_{t}$ in the boundaries
of the corresponding $0$-handles, and also so that the same point
$u_{j}$ does not get chosen twice within the same $1$-handle by
different triangles. Like in the case of the $1$-handles, this can
always be done: first, for each edge $e$ order the triangles $\sigma$
containing $e$ as $\sigma_{1},\ldots,\sigma_{s}$ (where $s\le k$
by assumption). Then assign each $\sigma_{i}$ the interval $u_{i}\times I$
within $\partial H_{1,e}$ (and hence also within $\partial M_{1}$).
At this point, for each triangle $\sigma$ of $\Delta$ we have chosen
the subpaths of $\gamma_{\sigma}$ within the boundaries of the $1$-handles
corresponding to its edges. If $v$ is a vertex of $\sigma$, two
of these subpaths terminate at some $q_{i_{1},j_{1}}\in\Sigma_{i_{1}}$
and $q_{i_{2},j_{2}}\in\Sigma_{i_{2}}$ within $\partial H_{0,v}$
(by our gluing of $1$-handles to $0$-handles). Exactly one of the
paths in $\{p_{t}\}_{t}$ connects these two points: this path $p_{t}$
is the subpath of $\gamma_{\sigma}$ within $H_{0,v}$.

The paths $\{\gamma_{\sigma}:\sigma\in\Delta^{(2)}\}$ are pairwise
disjoint, and since $\partial M_{1}$ is orientable (as $M_{1}$ is)
\Cref{fact:2_core_extension} shows that their regular neighborhoods
are PL-homeomorphic to $S^{1}\times D^{d-1}$. Choose pairwise disjoint
regular neighborhoods of $\{\gamma_{\sigma}:\sigma\in\Delta^{(2)}\}$
and glue a $2$-handle to each of them.

The resulting manifold is $M$.
\begin{rem}
The gluing maps of the $2$-handles involve choices that may change
the homeomorphism type. This may be easiest to see when $d=4$: in
this case the effect on $\partial M$ of gluing an additional $2$-handle
is a Dehn surgery. The choice of gluing map determines the slope,
for which an arbitrary integer can be chosen.
\end{rem}

\subsection{{\texorpdfstring{The subpolyhedron $\Delta^{\prime}$ of $M\left(\Delta\right)$}{The subpolyhedron Δ' of M(Δ)}}}

In order to identify $\Delta'$ within $M(\Delta)$ it is useful to
have notation for certain additional subpolyhedra of our handles.

Identify the reference $0$-handle $H_{0}$ with the cone over its
boundary $c_{0}*\partial H_{0}$, and similarly identify the $(d-1)$-disk
$\Sigma$ with the cone $c_{1}*\partial\Sigma$, so that our reference
$1$-handle is $H_{1}=\Sigma\times I=\left(c_{1}*\partial\Sigma\right)\times I$.
Also choose a ``center'' point $d_{i}$ in the interior of each
$\Sigma_{i}\subset\partial H_{0}$, so that $\Sigma_{i}=d_{i}*\partial\Sigma_{i}$.
Without loss of generality we may assume that the gluing maps of $1$-handles
to $0$-handles, which glue copies of $\Sigma\times i\subset\partial H_{1}$
(for $i\in\{0,1\}$) to copies of $\Sigma_{j}\subset\partial H_{0}$
(for $j\in\{1,\ldots,k\}$) satisfy that $c_{1}\in\Sigma\times i$
maps to $d_{j}\in\Sigma_{j}$. We further make the following ``linearity''
assumption: If $X\subset\partial\Sigma$ is a subpolyhedron, and $X'$
is its image under one of the gluing maps of $\Sigma\times i\subset\partial H_{1,e}$
to $\Sigma_{j}\subset\partial H_{0,v}$, then $c_{1}*X$ is glued
to $d_{i}*X'$. This assumption is harmless: it can be omitted at
the cost of introducing separate notation for the PL-homeomorph of
$c_{1}*X$ within $\Sigma_{j}$ (but it is actually satisfied in our
triangulations of $M$).

Each $2$-handle $H_{2,\sigma}$ (for $\sigma\in\Delta^{(2)}$) in
$M(\Delta)$ is a PL $I^{2}\times I^{d-2}$, and it is attached along
a regular neighborhood of $\partial I^{2}\times\{(\frac{1}{2},\ldots,\frac{1}{2})\}$
to a regular neighborhood of a closed path $\gamma_{\sigma}$. We
may assume without loss of generality that the gluing map sends $\partial I^{2}\times\{(\frac{1}{2},\ldots,\frac{1}{2})\}$
to $\gamma_{\sigma}$.

Recall that in each $e\in\Delta^{(1)}$ the $2$-faces of $\Delta$
containing $e$ are ordered as $\sigma_{1},\ldots,\sigma_{s}$ ($s\le k$)
in the construction of $M$, and that $2$-handles are glued to $\partial H_{1,e}$
along regular neighborhoods of $u_{1}\times I,\ldots,u_{s}\times I$
respectively. We denote by $s(e)$ the number of $2$-faces containing
each $e\in\Delta^{(1)}$.

We define $\Delta'$ by giving its intersection with each handle $H_{i,\sigma}$.
The idea is that these intersections contain the core of each handle
$H_{i,\sigma}$ (a point for a $0$-handle, a segment for a $1$-handle,
or a $2$-disk for a $2$-handle); but further, whenever $\sigma$
contains a face $\tau$, the intersection of $\Delta'$ with the handle
corresponding to $\tau$ has a ``flap'' extending into it from the
handle corresponding to $\sigma$. For example, the core of a $1$-handle
$H_{1,e}$ is $c_{1}\times I$, and the core of an adjacent $0$-handle
$H_{0,v}$ is $c_{0}$. We extend $c_{1}\times I$ into $H_{0,v}$
by a segment from $c_{0}$ to the corresponding endpoint of $c_{1}\times I$.
\begin{itemize}
\item For each $2$-handle $H_{2,\sigma}=I^{2}\times I^{d-2}$ define $C(\sigma)=I^{2}\times\left(\frac{1}{2},\ldots,\frac{1}{2}\right)$
(this square is the core of the handle). We define $\Delta'\cap H_{2,\sigma}=C(\sigma)$.
\item For each $1$-handle $H_{1,e}$, the core of the handle is $C(e)=c_{1}\times I\subset H_{1,e}$.
Let $\sigma_{1},\ldots,\sigma_{s}$ be the $2$-faces containing $e$
in the order above. Define the flaps
\[
F_{\sigma_{i},e}=(c_{1}*u_{i})\times I
\]
(these are rectangles with one edge going vertically through the core
of $H_{1,e}$) and set 
\[
\Delta'\cap H_{1,e}=C(e)\cup\bigcup_{i=1}^{s(e)}F_{\sigma_{i},e}.
\]
Observe that we also have $\Delta'\cap H_{1,e}=(c_{1}*\{u_{1},\ldots,u_{s(e)}\})\times I$
(or just $c_{1}\times I$ if $s(e)=0$).
\item For each $v\in\Delta^{(0)}$, the core of $H_{0,v}$ is $C(v)=\{c_{0}\}\subset H_{0,v}$.
Given an edge $e$ containing $v$, let $i$ be the unique
index $1\le i\le k$ such that $H_{1,e}$ is glued to $\partial H_{0,v}$
along $\Sigma_{i}\subset\partial H_{0,v}$. Define
\[
F_{e,v}=c_{0}*d_{i}.
\]
Given a $2$-face $\sigma$ containing $v$, denote the two edges
of $\sigma$ that contain $v$ by $e_{1},e_{2}$. Let $1\le i_{1},i_{2}\le k$
be the indices such that $H_{1,e_{1}}$ and $H_{1,e_{2}}$ are glued
to $\partial H_{0,v}$ along $\Sigma_{i_{1}}$ and $\Sigma_{i_{2}}$
respectively. Let $j_{1}$ be the index such that $\sigma$ is the
$j_{1}$-th face containing $e_{1}$ (in the order above), and similarly
$j_{2}$ for $e_{2}$. In other words, $j_{1}$ is the index such
that $F_{\sigma,e_{1}}=(c_{1}*u_{j_{1}})\times I$, and similarly
for $j_{2}$. Among the paths $\{p_{t}\}_{t}\subset\partial H_{0}$,
let $p_{t}$ be the one running between $q_{i_{1},j_{1}}$ and $q_{i_{2},j_{2}}$.
Define
\[
F_{\sigma,v}=\left[c_{0}*p_{t}\right]\cup\left[F_{e_{1},v}*q_{i_{1},j_{1}}\right]\cup\left[F_{e_{2},v}*q_{i_{2},j_{2}}\right].
\]
Note that $c_{0}*p_{t}$ is a PL $2$-disk with boundary $p_{t}\cup\left(c_{0}*q_{i_{1},j_{1}}\right)\cup\left(c_{0}*q_{i_{2},j_{2}}\right)$.
Each of the other two subpolyhedra is a triangle linearly embedded
in $H_{0,v}$, with boundaries $\{c_{0},q_{i_{1},j_{1}},d_{i_{1}}\}$
and $\{c_{0},q_{i_{2},j_{2}},d_{i_{2}}\}$ respectively. The disk
$c_{0}*p_{t}$ meets the triangle $F_{e_{1},v}*q_{i_{1},j_{1}}$ along
the segment $c_{0}*q_{i_{1},j_{1}}$ and similarly for $e_{2}$: the
union is hence a PL $2$-disk with boundary
\[
\partial F_{\sigma,v}=\underbrace{\left(c_{0}*d_{i_{1}}\right)}_{F_{e_{1},v}}\cup\left[\left(d_{i_{1}}*q_{i_{1},j_{1}}\right)\cup p_{t}\cup\left(q_{i_{2},j_{2}}*d_{i_{2}}\right)\right]\cup\underbrace{\left(d_{i_{2}}*c_{0}\right)}_{F_{e_{2},v}}.
\]

We define 
\[
\Delta'\cap H_{0,v}=C(v)\cup\bigcup_{e\ni v}F_{e,v}\cup\bigcup_{\sigma\ni v}F_{\sigma,v}.
\]

\end{itemize}
\begin{prop}
$\Delta'\simeq\Delta$.
\end{prop}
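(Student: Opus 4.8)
The plan is to exhibit $\Delta'$ as the union of a subpolyhedron $\Gamma$ that is PL homeomorphic to the $1$-skeleton $\Delta^{\le 1}$, together with one PL $2$-disk $D_\sigma$ for each $\sigma\in\Delta^{(2)}$, attached to $\Gamma$ precisely along (the image of) $\partial\sigma$. Since $\Delta$ itself is $\Delta^{\le 1}$ with a $2$-disk glued along $\partial\sigma$ for each $\sigma$, and since any PL homeomorphism of the boundary circle of a PL $2$-disk extends over the disk, this matches the cell structures and produces the homeomorphism $\Delta'\cong\Delta$. Concretely, let $\Gamma$ be the union of the core-points $C(v)$ ($v\in\Delta^{(0)}$), the core-segments $C(e)$ ($e\in\Delta^{(1)}$), and the flaps $F_{e,v}$ over incident pairs $v\in e$; and for $\sigma\in\Delta^{(2)}$ set $D_\sigma=C(\sigma)\cup\bigcup_{e\subset\sigma}F_{\sigma,e}\cup\bigcup_{v\in\sigma}F_{\sigma,v}$. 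Reading off the description of $\Delta'\cap H_{i,\tau}$ on each handle shows that $\Delta'=\Gamma\cup\bigcup_{\sigma}D_\sigma$ and that each listed flap lies in $\Gamma$ or in exactly one $D_\sigma$. To see $\Gamma\cong\Delta^{\le1}$: for an edge $e=\{v,w\}$ the set $A_e=F_{e,v}\cup C(e)\cup F_{e,w}$ is a PL arc joining $C(v)$ to $C(w)$ (three segments end to end), and distinct arcs $A_e$ meet only in shared core-points $C(v)$, because the flaps $c_0*d_i$ at $v$ use pairwise distinct centers $d_i$ lying in pairwise disjoint neighborhoods $\Sigma_i$, and $C(e)\subset H_{1,e}$. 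Thus $\Gamma$ carries a regular cell structure isomorphic to a subdivision of $\Delta^{\le1}$; fix a PL homeomorphism $\phi\colon\Gamma\to\Delta^{\le1}$ carrying each $A_e$ onto $e$ and each $C(v)$ to $v$.

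Next I would verify that each $D_\sigma$ is a PL $2$-disk with $D_\sigma\cap\Gamma=\partial D_\sigma$ and $\phi(\partial D_\sigma)=\partial\sigma$; this is the computational heart of the argument. One starts from the square $C(\sigma)$, whose boundary is $\gamma_\sigma$, naturally subdivided into six arcs lying in the handles traversed by $\gamma_\sigma$ (three in $1$-handles, three in $0$-handles). To it one attaches, for each edge $e$ of $\sigma$, the rectangle $F_{\sigma,e}=(c_1*u_j)\times I$ along its edge $u_j\times I$, which is exactly the arc of $\gamma_\sigma$ inside $H_{1,e}$; then, for each vertex $v$ of $\sigma$, the disk $F_{\sigma,v}$ along the connected arc of $\partial F_{\sigma,v}$ consisting of the arc $p_t\subset\gamma_\sigma$ lying in $H_{0,v}$ (the same $p_t$ enters both $\gamma_\sigma$ and the definition of $F_{\sigma,v}$) flanked by the two ``vertex-end'' edges of the adjacent rectangles, which equal $d_{i_1}*q_{i_1,j_1}$ and $d_{i_2}*q_{i_2,j_2}$ by the linearity assumption together with the normalizations $c_1\leftrightarrow d_i$ and $u_j\leftrightarrow q_{i,j}$. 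Each attachment glues a disk (or rectangle) to a disk along a single boundary arc, so the result remains a PL $2$-disk; bookkeeping the part of the boundary that stays free leaves exactly $\bigcup_{e\subset\sigma}C(e)\cup\bigcup_{v\in\sigma}(F_{e_1,v}\cup F_{e_2,v})$, where $e_1,e_2$ are the two edges of $\sigma$ at $v$, and this is precisely $\phi^{-1}(\partial\sigma)$.

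It then remains to check that the interiors of the $D_\sigma$ are pairwise disjoint and disjoint from $\Gamma$, which follows from the disjointness already built into the construction of $M(\Delta)$: the $2$-handles $H_{2,\sigma}$ are glued along pairwise disjoint regular neighborhoods, distinct edge-flaps $(c_1*u_j)\times I$ inside a fixed $H_{1,e}$ meet only in $C(e)$ by the cone structure of $\Sigma=c_1*\partial\Sigma$, and inside a fixed $H_{0,v}$ every relevant flap is either a radial segment $c_0*d_i$ or $c_0*q_{i,j}$ from the cone point, or a cone $c_0*p_t$ over a boundary arc with $p_t$ disjoint from the interiors of the $\Sigma_i$, so any two of them meet only where the construction intends. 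With $\Delta=\Delta^{\le1}\cup\bigcup_\sigma\sigma$ and $\Delta'=\Gamma\cup\bigcup_\sigma D_\sigma$ having matching attaching data under $\phi$, I extend $\phi$ over each $D_\sigma$ to a PL homeomorphism onto $\sigma$ (a PL homeomorphism of $S^1$ extends over $D^2$), obtaining a PL homeomorphism $\Delta'\to\Delta$. I expect the main obstacle to be the boundary bookkeeping in the middle step: correctly tracking which face of each flap rectangle or disk is glued to which arc of $\partial C(\sigma)$ requires unwinding all the normalizations simultaneously — the chosen orderings of edges at a vertex and of $2$-faces at an edge, the identifications $c_1\leftrightarrow d_i$ and $u_j\leftrightarrow q_{i,j}$, and the linearity assumption — while keeping the six-arc decomposition of $\partial C(\sigma)$ consistent across the three $0$-handles and three $1$-handles it passes through.
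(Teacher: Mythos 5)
Your proposal is correct and follows essentially the same route as the paper: you identify the cell decomposition $\Delta'=\bigcup_\sigma D_\sigma$ indexed by the faces of $\Delta$, verify that each $D_\sigma$ is a disk of the right dimension, check that interiors are pairwise disjoint, and build the PL homeomorphism skeleton by skeleton. The only cosmetic difference is that you package the $0$- and $1$-dimensional cells into $\Gamma=\Delta^{\prime\le 1}$ up front and prove $\Gamma\cong\Delta^{\le 1}$ before attaching the $D_\sigma$ for $\sigma\in\Delta^{(2)}$, whereas the paper treats all dimensions together via the uniform formula $D_\sigma=C(\sigma)\cup\bigcup_{\emptyset\neq\tau\subsetneq\sigma}F_{\sigma,\tau}$ and proves disjointness of interiors in a single case analysis; the substance, including the gluing bookkeeping you flag as the main obstacle, is the same.
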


\begin{proof}
For each face $\sigma\in\Delta$ we define $D_{\sigma}=C(\sigma)\cup\bigcup_{\emptyset\neq\tau\subsetneq\sigma}F_{\sigma,\tau}$.
Observe that if $\sigma$ is a vertex then $D_{\sigma}=c_{0}\in H_{0,v}$
is a point, and if $\sigma=\{v,w\}$ is an edge then $D_{\sigma}$
is an arc with endpoints $D_{v},D_{w}$. We prove that $D_{\sigma}$
is a $2$-disk for each $\sigma\in\Delta^{(2)}$, that $\Delta'=\bigcup_{\sigma\in\Delta}D_{\sigma}$,
and that each point of $\Delta'$ is in the interior of precisely
one of the disks $\{D_{\sigma}:\sigma\in\Delta\}$ (where the unique
point of a $0$-disk is considered ``interior''). Therefore $\Delta'$
is homeomorphic to a cell complex with cells indexed by the faces
of $\Delta$. Finally, we prove that $\Delta'\simeq\Delta$ by showing
that the cells $\{D_{\sigma}\}_{\sigma\in\Delta}$ are glued to each
other in the same way as the faces of $\Delta$ are.
\begin{claim*}
Let $\sigma=\{u,v,w\}$ be a $2$-face of $\Delta$. Then $D_{\sigma}$
is a PL $2$-disk with boundary $D_{\{u,v\}}\cup D_{\{v,w\}}\cup D_{\{w,u\}}$.
\end{claim*}
\begin{proof}
By definition,
\[
D_{\sigma}=C(\sigma)\cup F_{\sigma,\{u,v\}}\cup F_{\sigma,\{v,w\}}\cup F_{\sigma,\{w,u\}}\cup F_{\sigma,u}\cup F_{\sigma,v}\cup F_{\sigma,w}
\]
is a union of seven $2$-disks. Their interiors are disjoint, because
the interior of each is contained in the interior of a different handle
of $M$. It follows from the construction that the disks are arranged
as \vspace{1ex}

\includegraphics{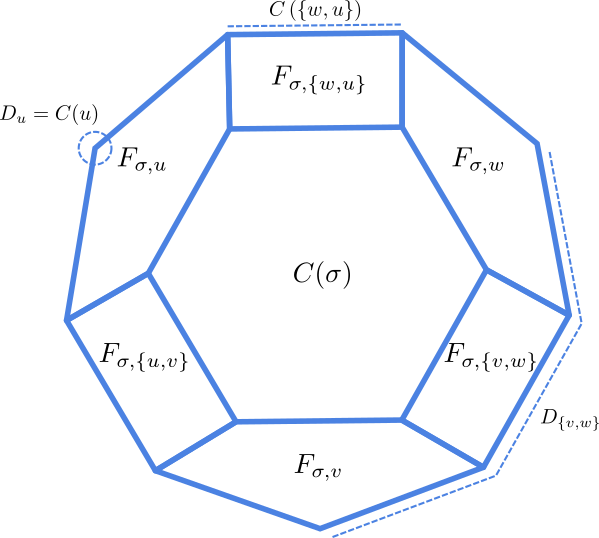} 

and hence form a disk as desired. The dashed line on the right is
parallel to the boundary arc $D_{\{v,w\}}$. The rest of the boundary
is similarly composed of $D_{\{w,u\}}\cup D_{\{u,v\}}$.
\end{proof}
\begin{claim*}
$\Delta'=\bigcup_{\sigma\in\Delta}D_{\sigma}$.
\end{claim*}
\begin{proof}
This is direct from the definition: for each face $\tau\in\Delta$
of dimension $i$, the subpolyhedron $\Delta'\cap H_{i,\tau}$ is
of the form $C(\tau)\cup\bigcup_{\sigma\supset\tau}F_{\sigma,\tau}$.
Hence
\[
\Delta'=\bigcup_{\sigma\in\Delta}C(\sigma)\cup\bigcup_{\emptyset\neq\tau\subsetneq\sigma\in\Delta}F_{\sigma,\tau}.
\]
Each $D_{\sigma}$ is of the form $D_{\sigma}=C(\sigma)\cup\bigcup_{\emptyset\neq\tau\subsetneq\sigma}F_{\sigma,\tau}$,
and hence $\Delta'=\bigcup_{\sigma\in\Delta}D_{\sigma}$.
\end{proof}
\begin{claim*}
For each $\sigma\in\Delta$ the subpolyhedron $D_{\sigma}\subset\Delta'$
is a disk of dimension $\dim\sigma$. If $\sigma,\tau\in\Delta$ are
distinct faces then the interiors of the disks $D_{\sigma}$ and $D_{\tau}$
are disjoint.
\end{claim*}
\begin{proof}
For $\sigma$ a vertex, $D_{\sigma}=C(\sigma)$ is a point. For $\sigma=\{v,w\}$
an edge, $D_{\sigma}=C(\sigma)\cup F_{\sigma,v}\cup F_{\sigma,w}$
is a union of three segments: for the point $c_{0}\in H_{0,v}$ and
the index $1\le i\le k$ such that $H_{1,\sigma}$ is glued to $H_{0,v}$
along $\Sigma_{i}\subset\partial H_{0,v}$, we have $F_{\sigma,v}=c_{0}*d_{i}$,
where $d_{i}$ is one of the two endpoints of $C(\sigma)$. Hence
$D_{\sigma}$ is a $1$-disk (an arc) with endpoints $C(v)$ and $C(w)$.
For $\sigma$ a triangle the previous claim shows that $D_{\sigma}$
is a $2$-disk. This proves the first part of the claim.

Let $\sigma,\tau\in\Delta$ be distinct. Note that $D_{\sigma}=C(\sigma)\cup\bigcup_{\emptyset\neq\rho\subset\sigma}F_{\sigma,\rho}$,
where $C(\sigma)$ is contained in $H_{\dim\sigma,\sigma}$ and each
$F_{\sigma,\rho}$ is contained in $H_{\dim\rho,\rho}$. In particular,
in order to compute $D_{\sigma}\cap D_{\tau}$ it suffices to consider
the intersection within those handles of $M$ that correspond to faces
$\rho\subseteq\sigma\cap\tau$. For $\rho\subseteq\sigma\cap\tau$
of dimension $i$, we consider the possibilities for $D_{\sigma}\cap D_{\tau}\cap H_{i,\rho}$:
\begin{itemize}
\item The case $i=2$ is impossible, because if $\sigma\cap\tau$ is a $2$-face
then $\tau=\sigma$.
\item If $i=1$ then $D_{\sigma}\cap H_{1,\rho}$ is either $C(\rho)$ (if
$\rho=\sigma$) or of the form $F_{\sigma,\rho}=(c_{1}*u_{j})\times I$
for some $1\le j\le k$ if $\rho\subsetneq\sigma$. Similarly $D_{\tau}\cap H_{1,\rho}=F_{\tau,\rho}$
is either $C(\rho)$ or $(c_{1}*u_{j'})\times I$ for some $1\le j'\le k$,
where $j\neq j'$ if both are defined. In all cases we have $F_{\sigma,\rho}\cap F_{\tau,\rho}=C(\rho)$.
\item If $i=0$ then $\rho=\{v\}$ is a vertex. Recall that $C(v)=\{c_{0}\}\subset H_{0,v}$.
If $\sigma$ is a proper superset of $\{v\}$ then $F_{\sigma,\rho}$
is a cone $c_{0}*Z$ for some $Z\subset\partial H_{0,v}$: $Z$ is
either a single point $d_{j}$ ($1\le j\le k$) if $\sigma$ is the
edge such that $H_{1,\sigma}$ is glued onto $\Sigma_{j}\subset\partial H_{0,v}$,
or (if $\sigma$ is a $2$-face) it is an arc of the general form
$\left(d_{i_{1}}*q_{i_{1},j_{1}}\right)\cup p_{t}\cup\left(q_{i_{2},j_{2}}*d_{i_{2}}\right)$
for some indices $1\le i_{1},i_{2},j_{1},j_{2}\le k$ (where $i_{1}\neq i_{2}$).
Similarly if $\tau$ is a proper superset of $\rho$ then $F_{\tau,\rho}=c_{0}*Z'$
for $Z'$ of the same general form as $Z$. If either $\sigma=\rho=\{v\}$
then $D_{\sigma}\cap D_{\tau}\cap H_{0,v}=D_{\sigma}\cap D_{\tau}=C(v)$,
which is a boundary point of $D_{\tau}$, and the interiors of the
disks are disjoint. Otherwise $\rho$ is a proper face of both $\sigma$
and $\tau$, and $F_{\sigma,\rho}\cap F_{\tau,\rho}=c_{0}*(Z\cap Z')$.
If $Z\cap Z'=\emptyset$ then $F_{\sigma,\rho}\cap F_{\tau,\rho}=\{c_{0}\}$
is a boundary point of $D_{\sigma}$ and of $D_{\tau}$ and again
we are done. Otherwise, $Z\cap Z'=\{d_{i_{1}}\}$ for some $1\le i_{1}\le k$.
In this case at least one of $\sigma$ or $\tau$ is a $2$-face (else
both are edges, but this would imply that $H_{1,\sigma}$ and $H_{1,\tau}$
have both been glued to $\Sigma_{i_{1}}\subset\partial H_{0,v}$,
but this contradicts our choices above). Assuming without loss of
generality that $\sigma$ is a $2$-face, $c_{0}*d_{i_{1}}\subset H_{0,v}$
is an arc on the boundary of $D_{\sigma}$, and hence there is no
interior point of $D_{\sigma}$ in $D_{\sigma}\cap D_{\tau}\cap H_{0,v}$. 
\end{itemize}
\end{proof}
Observe that each point of $\Delta^{\prime\le1}\coloneqq\bigcup_{\{\sigma\in\Delta:\dim\sigma\le1\}}D_{\sigma}$
is in the interior of precisely one of the disks $D_{\sigma}$: if
it is not in the interior of some arc $D_{e}$ (where $e\in\Delta^{(1)}$
is an edge) then it is the boundary point of such an arc, which is
an interior point of the corresponding $0$-disk. Since for each $2$-face
$\tau\in\Delta$ the $2$-disk $D_{\tau}$ is glued onto $\Delta^{\prime\le1}$
along the entire boundary, each point of $\Delta'$ is either a point
of $\Delta^{\prime\le1}$ or is in the interior of some disk in $\{D_{\tau}:\tau\in\Delta^{(2)}\}$.
Hence $\Delta'$ is homeomorphic to a cell complex with cells $\{D_{\sigma}:\sigma\in\Delta\}$.
(If $\Delta$ happens to be infinite, the topology induced on $\Delta'$
from $M$ is the weak topology\footnote{This follows from the fact that in our triangulation of $M$ it is
possible to realize $\Delta'$ as a subcomplex after a constant number
of barycentric subdivisions.}.)

A PL homeomorphism $\Delta\simeq\Delta'$ can now be described by
induction on skeleta, where $\Delta^{\prime\le i}\coloneqq\{D_{\sigma}:\sigma\in\Delta,\dim\sigma\le i\}$.
We have $\Delta^{\le0}\simeq\Delta^{\prime\le0}$, since both are
$0$-dimensional and their vertex sets are in bijection $v\mapsto D_{v}$.
Also $\Delta^{\le1}\simeq\Delta^{\prime\le1}$, because for each edge
$e=\{u,v\}$ we have $\partial D_{e}=D_{u}\cup D_{v}$, and hence
the map $\Delta^{\le0}\overset{\sim}{\rightarrow}\Delta^{\prime\le0}$
extends to $1$-skeleta. It also extends to $2$-skeleta, because
for each $\sigma=\{u,v,w\}\in\Delta^{(2)}$ the boundary of $D_{\sigma}$
is precisely the cycle $D_{\{u,v\}}\cup D_{\{v,w\}}\cup D_{\{w,u\}}$,
which is the image of $\partial\sigma$ under the map $\Delta^{\le1}\overset{\sim}{\rightarrow}\Delta^{\prime\le1}$.
\end{proof}

\subsection{{\texorpdfstring{Collapsing $M\left(\Delta\right)$ onto $\Delta^{\prime}$}{Collapsing M(Δ) onto Δ'}}}

We prove that for each $i$-face $\sigma\in\Delta$ it is possible
to collapse $H_{i,\sigma}$ onto the union of $H_{i,\sigma}\cap\Delta'$
and the subpolyhedron along which $H_{i,\sigma}$ is glued to lower-index
handles. We then show that these operations can be carried out within
$M$ in order of decreasing handle index, so that $M(\Delta)\searrow\Delta'$. 

Our main tool is the next lemma.
\begin{lem}
\label[lemma]{lem:handle_collapse}Let $P\subset\partial I^{d-k}$
be a subpolyhedron, and identify the $(d-k)$-cube $I^{d-k}$ with
the cone $c*\partial I^{d-k}$ over its boundary. Then 
\[
I^{d}=I^{k}\times I^{d-k}\searrow\left[I^{k}\times\left(c*P\right)\right]\cup\left[\partial I^{k}\times I^{d-k}\right].
\]
\end{lem}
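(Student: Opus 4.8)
The plan is to factor the collapse through the intermediate polyhedron $(I^k\times(c*P))\cup(\partial I^k\times I^{d-k})$ by two essentially independent moves: first collapse the cocore factor $I^{d-k}=c*\partial I^{d-k}$ onto $c*P$, and then observe that crossing any collapse with $I^k$ automatically produces the remaining ``sides'' term $\partial I^k\times I^{d-k}$. So I would isolate two statements and combine them.

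\emph{Step 1: $I^{d-k}=c*\partial I^{d-k}\searrow c*P$.} Triangulate $\partial I^{d-k}$ so that $P$ is a full subcomplex (triangulate the pair $(\partial I^{d-k},P)$, then pass to the barycentric subdivision, which makes every subcomplex full); coning this triangulation off from $c$ triangulates $I^{d-k}$ and realizes $c*P$ as a subcomplex. Now list the simplices of $\partial I^{d-k}$ that do \emph{not} lie in $P$ as $\tau_1,\dots,\tau_N$, in order of non-increasing dimension. For each $i$ in turn, after the elementary collapses that have already removed the interiors of $\tau_1, c*\tau_1,\dots,\tau_{i-1}, c*\tau_{i-1}$, the simplex $\tau_i$ is a free face of $c*\tau_i$: any simplex $\rho$ of $\partial I^{d-k}$ properly containing $\tau_i$ cannot lie in $P$ (else $\tau_i$ would, by fullness), so $\rho=\tau_j$ for some $j$, and if in addition $\dim\rho=\dim\tau_i+1$ then $j<i$ by the chosen ordering, so $\rho$ has already been deleted. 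Performing all $N$ elementary collapses thus removes exactly the simplices of $\partial I^{d-k}\setminus P$ together with their cones, leaving precisely $c*P$. (When $P=\emptyset$ this is just the fact that a cone collapses to its apex; when $P=\partial I^{d-k}$ there is nothing to collapse.)

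\emph{Step 2: product--collapse principle.} I would prove: if $X\searrow Y$ are polyhedra then $I^k\times X\searrow (I^k\times Y)\cup(\partial I^k\times X)$. Inducting on $k$ and using $\partial I^k=(\partial I\times I^{k-1})\cup(I\times\partial I^{k-1})$ to rearrange the target reduces this to $k=1$; composing collapses reduces further to a single elementary collapse $X=Y\cup D^n$ with $Y\cap D^n=D^{n-1}$. In that case $I\times X=(I\times Y)\cup(I\times D^n)$, glued along $I\times D^{n-1}$, and since $I\times D^{n-1}$ already lies in the target it suffices to collapse the $(n+1)$-ball $I\times D^n$ onto $(I\times D^{n-1})\cup(\partial I\times D^n)$ while fixing everything else. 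Writing $D^n\cong I^{n-1}\times I$ with $D^{n-1}=I^{n-1}\times\{0\}$ identifies $I\times D^n$ with an $(n+1)$-cube and the latter subpolyhedron with the union of three of its facets, which is an $n$-ball lying in the boundary; the collapse then follows from the standard PL fact that a ball collapses onto any ball contained in its boundary (see \cite{RS_PLTop}). Applying this with $X=I^{d-k}$ and $Y=c*P$ — legitimate by Step 1 — gives $I^d=I^k\times I^{d-k}\searrow (I^k\times(c*P))\cup(\partial I^k\times I^{d-k})$, which is the claim.

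I expect the only delicate points to be, first, checking that every elementary collapse used in Step 2 is \emph{relative}, i.e.\ never modifies the subpolyhedron we mean to preserve: this is why the reduction is arranged so that $I^k\times Y$ (in the inductive step) and $\partial I^k\times I^{d-k}$ (in the final statement) always sit \emph{inside} the target of the collapse being performed, so that leaving them untouched is automatic; and second, invoking the correct ``ball collapses onto a boundary ball'' statement, together with the routine remark that a simplicial elementary collapse is a polyhedral elementary collapse in the sense used here. The rest is bookkeeping with products and cones.
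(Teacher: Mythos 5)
Your proof is correct, but it takes a genuinely different route from the paper's. The paper works with a single product cellulation of $I^k\times I^{d-k}$ obtained by identifying $I^k$ with a $k$-simplex $Z$ (so that $Z$ has a unique top cell $\delta$) and taking the cone triangulation $c*X$ of $I^{d-k}$; it then does a single pass, collapsing each cell $\delta\times(c*\tau_i)$ from its free face $\delta\times\tau_i$ as $\tau_i$ ranges over the faces of $X\setminus P$ in decreasing order of dimension. Your argument instead factors the problem: Step~1 is precisely the "cone" half of the paper's argument (your ordering and freeness check for $\tau_i$ in $c*X$ is the same idea, though the fullness hypothesis is superfluous --- $P$ being a subcomplex already rules out $\tau_i\subsetneq\rho\in P$), while Step~2 is a clean, self-contained product--collapse lemma: if $X\searrow Y$ then $I^k\times X\searrow (I^k\times Y)\cup(\partial I^k\times X)$, proved by induction on $k$ and reduction to a single elementary collapse, which becomes the fact that an $(n+1)$-cube collapses onto a ball formed by three of its facets. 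The trade-off is that the paper's identification of $I^k$ with a simplex gives a one-shot argument with no induction, whereas your route produces a reusable general lemma (at the cost of a longer chain of elementary collapses) and leans a bit more on standard PL ball facts (that any PL $(n-1)$-ball in $\partial B^n$ is a face, via Newman's theorem, so that "a ball collapses onto a ball in its boundary" applies). Both are valid; the delicate points you flag --- that each elementary collapse is relative, and that simplicial free-face collapses are polyhedral elementary collapses --- are exactly the right things to check, and your reduction handles them correctly.
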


\begin{proof}
Choose a triangulation $X$ of $\partial I^{d-k}$ such that $P$
is a subcomplex, so that $c*X$ is a triangulation of $I^{d-k}$.
Identify $I^{k}$ with the $k$-simplex, which we
denote $Z$ (as $\Delta$ is taken). Consider the cellulation of $I^{k}\times I^{d-k}$
with cells given by $\left\{ \sigma\times\tau \mid \sigma \in Z, \tau \in c*X \right\} $.
Denote by $\delta \in Z$ the unique $k$-dimensional face. 

Order $X\setminus P$ (i.e. the set of faces of $X$ which are not
faces of $P$) in order of decreasing dimension as $\tau_{1},\tau_{2},\ldots,\tau_{k}$.
Observe that $\delta\times\tau_{1}$ is contained in no face other
than $\delta\times\left(c*\tau_{1}\right)$, and hence we may collapse
$\delta\times\left(c*\tau_{1}\right)$ from $\delta\times\tau_{1}$
onto $\delta\times\left(c*\partial\tau_{1}\right)$. We do this for
every $\tau_{i}$ in turn for $2\le i\le k$, collapsing $\delta\times\left(c*\tau_{i}\right)$
from $\delta\times\tau_{i}$. By the end of the process we have deleted
precisely $\delta\times\left(c*X\setminus c*P\right)$, where $\delta$
is the interior of the closed disk $Z$. Hence we are left with
\[
\left[\delta\times\left(c*P\right)\right]\cup\left[\partial Z \times\left(c*X\right)\right]=\left[Z \times\left(c*P\right)\right]\cup\left[\partial Z\times\left(c*X\right)\right]
\]
(where the inclusion $\subset$ is clear, and $\supset$ follows from
the fact $Z=\delta\cup\partial Z$, so that $Z \times\left(c*P\right)$
is contained in the union on the left). 

By our identification of $I^{k}$ and $I^{d-k}$ with the appropriate
triangulations we have obtained precisely $I^{d}=I^{k}\times I^{d-k}\searrow\left[I^{k}\times\left(c*P\right)\right]\cup\left[\partial I^{k}\times I^{d-k}\right]$.
\end{proof}
We apply this lemma for each handle $H_{i,\sigma}$, where $k=i=\dim\sigma$
and $H_{i,\sigma}$ is identified with $I^{k}\times I^{d-k}$: 
\begin{itemize}
\item If $\sigma=\{v\}$ is a vertex, the lemma states that $I^{d}$ collapses
onto $c*P$ for any subpolyhedron $P\subset\partial I^{d}$. Since
$\Delta'\cap H_{0,v}$ is of the form $c_{0}*P$ for a certain subpolyhedron
$P\subset\partial H_{0,v}$, we get the desired collapse $H_{0,v}\searrow\Delta'\cap H_{0,v}$.
\item If $\sigma=e=\{u,v\}$ is an edge, $\Delta'\cap H_{1,e}=(c_{1}*\{u_{1},\ldots,u_{s(e)}\})\times I$
for $c_{1}\in\Sigma$ the chosen center point and $s(e)$ the number
of $2$-faces containing $e$. To bring this description to the form
in the lemma, identify $H_{1,e}=\Sigma\times I$ with $I\times I^{d-1}$
(where the $\Sigma$ factor maps to the $I^{d-1}$ factor) and set
$P\subset I^{d-1}$ the image of the subpolyhedron $\{u_{1},\ldots,u_{s(e)}\}$
of $\Sigma$. In terms of $H_{1,e}$, we obtain 
\[
H_{1,e}\searrow\left[\Delta'\cap H_{1,e}\right]\cup\left[\Sigma\times\{0\}\right]\cup\left[\Sigma\times\{1\}\right].
\]
Observe that this is the union of $\Delta'\cap H_{1,e}$ with the
part of $\partial H_{1,e}$ that is glued onto the $0$-handles of
$M$.
\item If $\sigma$ is a $2$-face, we apply the lemma with $P$ empty, so
that $c*P=\{c\}$. We obtain a collapse $I^{2}\times I^{d-2}\searrow\left[I^{2}\times c\right]\cup\left[\partial I^{2}\times I^{d-2}\right]$.
Identifying $c$ with the interior point $\left(\frac{1}{2},\ldots,\frac{1}{2}\right)$
of $I^{d-k}$, we get 
\[
I^{2}\times I^{d-2}\searrow\left[I^{2}\times\left(\frac{1}{2},\ldots,\frac{1}{2}\right)\right]\cup\left[\partial I^{2}\times I^{d-2}\right].
\]
Here $I^{2}\times\left(\frac{1}{2},\ldots,\frac{1}{2}\right)=C(\sigma)$,
and altogether $\left[I^{2}\times\left(\frac{1}{2},\ldots,\frac{1}{2}\right)\right]\cup\left[\partial I^{2}\times I^{d-2}\right]$
is a union of $C(\sigma)$ and a regular neighborhood of $C(\sigma)\cap\partial I^{d}$,
by the next lemma. In other words, we obtain a collapse of $H_{2,\sigma}$
onto the union of $\Delta'\cap H_{2,\sigma}$ with the part of $\partial H_{2,\sigma}$
that is glued onto the $0$- and $1$-handles of $M$.
\end{itemize}
\begin{lem}
$(\partial I^{2})\times I^{d-2}$ is a regular neighborhood of $\left[I^{2}\times\left\{ \left(\frac{1}{2},\ldots,\frac{1}{2}\right)\right\} \right]\cap\partial I^{d}$
within $\partial I^{d}$.
\end{lem}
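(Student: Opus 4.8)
Writing $p=\left(\tfrac{1}{2},\dots,\tfrac{1}{2}\right)$ for the center of $I^{d-2}$ and using $\partial I^{d}=\bigl(\partial I^{2}\times I^{d-2}\bigr)\cup\bigl(I^{2}\times\partial I^{d-2}\bigr)$, the first thing I would record is that a point $(x,p)$ with $x\in I^{2}$ lies on $\partial I^{d}$ exactly when $x\in\partial I^{2}$, since $p\notin\partial I^{d-2}$. Hence $\bigl[I^{2}\times\{p\}\bigr]\cap\partial I^{d}=\partial I^{2}\times\{p\}$, a PL circle $C_{0}$ in the $(d-1)$-sphere $\partial I^{d}$, and it suffices to prove that $N:=\partial I^{2}\times I^{d-2}$ is a regular neighborhood of $C_{0}$ in $\partial I^{d}$.

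For this I would invoke the standard characterization of regular neighborhoods from \cite[Ch.~3]{RS_PLTop}: a compact subpolyhedron $N$ of a PL manifold $M$ that contains a compact subpolyhedron $X$ is a regular neighborhood of $X$ in $M$ as soon as $N$ is a topological neighborhood of $X$ in $M$, $N$ is a codimension-$0$ PL submanifold of $M$, and $N\searrow X$. Since here $M=\partial I^{d}$ is a sphere we have $\partial M=\emptyset$, so the auxiliary hypotheses of the criterion concerning how $X$ and $N$ meet $\partial M$ are vacuous, and $X\subset\mathrm{int}\,M$ automatically. The three conditions are then checked directly. First, $N$ is the union of the four facets of the cube $I^{d}=I^{2}\times I^{d-2}$ of the form $(\text{facet of }I^{2})\times I^{d-2}$; hence it is a subcomplex of the usual cell structure on $\partial I^{d}$ and is PL homeomorphic to $S^{1}\times I^{d-2}$, a $(d-1)$-manifold with boundary $\partial I^{2}\times\partial I^{d-2}$, which gives the submanifold condition. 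Second, each point of $C_{0}$ has the form $(x,p)$ with $x\in\partial I^{2}$ and $p$ at distance $\tfrac12$ from $\partial I^{d-2}$, so a small ball about it in $\partial I^{d}$ misses $I^{2}\times\partial I^{d-2}$ and therefore lies in $N$; thus $C_{0}\subset\mathrm{int}_{\partial I^{d}}N$, which is the neighborhood condition. Third, identifying $I^{d-2}$ with the cone $p*\partial I^{d-2}$ we have $I^{d-2}\searrow\{p\}$ (collapse the cells $p*\sigma$ in order of decreasing $\dim\sigma$), and since collapsing is preserved under taking the product with a fixed polyhedron, $N=\partial I^{2}\times I^{d-2}\searrow\partial I^{2}\times\{p\}=C_{0}$.

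I do not expect a real obstacle here: the only point requiring a little care is to quote the regular-neighborhood criterion in a form that applies to a closed ambient manifold, after which each of the three verifications is immediate from the explicit product description of $N$.
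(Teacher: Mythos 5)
Your proposal is correct and shares the paper's overall strategy of invoking the Rourke--Sanderson regular-neighborhood criterion (Cor.~3.30), reducing the problem to showing that $N=\partial I^{2}\times I^{d-2}$ is a compact codimension-$0$ PL submanifold of $\partial I^{d}$ that collapses onto $C_{0}=\partial I^{2}\times\{p\}$. Where the two arguments diverge is in how the collapse $N\searrow C_{0}$ is established. You first collapse the cone $I^{d-2}=p*\partial I^{d-2}$ onto its apex $p$ and then appeal to the general fact that collapsing is preserved by products with a fixed polyhedron ($A\searrow B\Rightarrow C\times A\searrow C\times B$). The paper instead proves by induction on $n$ that $P\times I^{n}\searrow P\times\{(\tfrac12,\dots,\tfrac12)\}$ for any compact polyhedron $P$, carrying out the base case $n=1$ by an explicit cell-by-cell collapse in the product cellulation of $P\times I$. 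In effect the paper is reproving (a special case of) the product-collapse lemma you cite, so the proofs are mathematically equivalent; yours is a bit more concise but relies on a lemma the paper chose not to quote, and theirs is fully self-contained. Your explicit verification that $C_{0}\subset\mathrm{int}_{\partial I^{d}}N$ is a small but welcome addition — it is one of the hypotheses of Cor.~3.30 that the paper leaves implicit.
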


\begin{proof}
By \cite[Cor. 3.30]{RS_PLTop} it suffices to show that $(\partial I^{2})\times I^{d-2}\subset\partial I^{d}$
is a compact manifold with boundary and that it collapses onto $\left[I^{2}\times\left\{ \left(\frac{1}{2},\ldots,\frac{1}{2}\right)\right\} \right]\cap\partial I^{d}$. 

It is clear that $(\partial I^{2})\times I^{d-2}\simeq S^{1}\times D^{d-2}$
is a compact manifold with boundary.

For the statement on collapsing, first observe that $\left[I^{2}\times\left\{ \left(\frac{1}{2},\ldots,\frac{1}{2}\right)\right\} \right]\cap\partial I^{d}=\left(\partial I^{2}\right)\times\left\{ \left(\frac{1}{2},\ldots,\frac{1}{2}\right)\right\} $.
So we need to prove that $\left(\partial I^{2}\right)\times I^{d-2}\searrow\left(\partial I^{2}\right)\times\left\{ \left(\frac{1}{2},\ldots,\frac{1}{2}\right)\right\} $.
We prove more generally that $P\times I^{n}\searrow P\times\left\{ \left(\frac{1}{2},\ldots,\frac{1}{2}\right)\right\} $
for any compact polyhedron $P$. It suffices to prove this for $n=1$
by induction: if the claim is true for some $n$ then 
\[
P\times I^{n+1}=\left(P\times I\right)\times I^{n}\searrow\left(P\times I\right)\times\left\{ \left(\frac{1}{2},\ldots,\frac{1}{2}\right)\right\} ,
\]
and up to permuting coordinates the polyhedron on the right is just
$\left(P\times\left\{ \left(\frac{1}{2},\ldots,\frac{1}{2}\right)\right\} \right)\times I$,
which we know collapses onto $P\times\left\{ \left(\frac{1}{2},\ldots,\frac{1}{2}\right)\right\} \times\left\{ \frac{1}{2}\right\} =P\times\underbrace{\left\{ \left(\frac{1}{2},\ldots,\frac{1}{2}\right)\right\} }_{\in I^{d+1}}$.

For $n=1$, fix a triangulation of $P$ and triangulate $I$ as a
union of two edges $\left[0,\frac{1}{2}\right]\cup\left[\frac{1}{2},1\right]$.
Thus $P\times I$ has a cellulation in which the cells are $\left\{ \sigma\times\tau\mid\sigma\in P,\tau\in I\right\} $.
For each $\dim\left(P\right)$-dimensional face $\sigma$ of $P$
we have that $\sigma\times\left\{ 1\right\} $ is a free face of $P\times I$
(it is contained only in $\sigma\times\left[\frac{1}{2},1\right]$)
and similarly $\sigma\times\left\{ 0\right\} $ is a free face of
$P\times I$ (contained only in $\sigma\times\left[0,\frac{1}{2}\right]$).
Collapsing $\sigma\times\left[\frac{1}{2},1\right]$ and $\sigma\times\left[0,\frac{1}{2}\right]$
along these free faces , we see 
\[
P\times I\searrow\left(P\times\left\{ \frac{1}{2}\right\} \right)\cup\left\{ \sigma\times\tau\mid\sigma\in P,\tau\in I,\dim\sigma\le\dim P-1\right\} .
\]
We continue in this way with the faces of $P$ of dimension $\dim\left(P\right)-1,\ldots,0$:
at the $i$-th step, we have collapsed $P\times I$ onto
\[
\left(P\times\left\{ \frac{1}{2}\right\} \right)\cup\left\{ \sigma\times\tau\mid\sigma\in P,\tau\in I,\dim\sigma\le\dim P-i\right\} ,
\]
and for each $\left(\dim P-i\right)$-dimensional face $\sigma$ of
$P$ we have that $\sigma\times\left\{ 1\right\} $ is contained only
in $\sigma\times\left[\frac{1}{2},1\right]$ and $\sigma\times\left\{ 0\right\} $
is contained only in $\sigma\times\left[0,\frac{1}{2}\right]$, so
we may collapse further onto
\[
\left(P\times\left\{ \frac{1}{2}\right\} \right)\cup\left\{ \sigma\times\tau\mid\sigma\in P,\tau\in I,\dim\sigma\le\dim P-(i+1)\right\} .
\]
Continuing this way we eventually collapse $P\times I$ onto $P\times\left\{ \frac{1}{2}\right\} $
as desired.
\end{proof}
\begin{prop}
If $\Delta$ is finite then $M(\Delta)\searrow\Delta'$.
\end{prop}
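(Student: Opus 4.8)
The plan is to derive $M(\Delta)\searrow\Delta'$ by splicing together, in order of \emph{decreasing} handle index, the handle-by-handle collapses established above (via \Cref{lem:handle_collapse}): first all the $2$-handles, then all the $1$-handles, then all the $0$-handles. Since $\Delta$ is finite there are only finitely many handles, so this produces a genuine finite collapse. The only tool needed to assemble the local collapses into a global one is the elementary fact that if $A\searrow B$ and $C$ is a polyhedron with $A\cap C\subseteq B$, then $A\cup C\searrow B\cup C$: every elementary collapse of $A$ is again one of $A\cup C$, because the disk it removes is disjoint from $C$.

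First I would peel off the $2$-handles one at a time. For each $\sigma\in\Delta^{(2)}$ we have shown $H_{2,\sigma}\searrow C(\sigma)\cup N_\sigma$, where $C(\sigma)=\Delta'\cap H_{2,\sigma}$ and $N_\sigma\subset\partial M_1$ denotes the attaching region of $H_{2,\sigma}$, i.e. the part of $\partial H_{2,\sigma}$ glued to the $0$- and $1$-handles. Distinct $2$-handles meet only along their pairwise disjoint attaching regions, so $H_{2,\sigma}$ meets the closure of its complement in $M(\Delta)$ precisely in $N_\sigma\subseteq C(\sigma)\cup N_\sigma$. Applying the fact above once for each $\sigma$ (and noting that $N_\sigma$ is already a part of $M_1$) gives
\[
M(\Delta)\ \searrow\ M_1\cup\bigcup_{\sigma}C(\sigma);
\]
call the right-hand side $M^{(1)}$.

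Next I would peel off the $1$-handles and then the $0$-handles. For each $e\in\Delta^{(1)}$ we have $H_{1,e}\searrow(\Delta'\cap H_{1,e})\cup(\Sigma\times\{0\})\cup(\Sigma\times\{1\})$, where the last two pieces form the part of $\partial H_{1,e}$ glued to the $0$-handles. In $M^{(1)}$ the handle $H_{1,e}$ is disjoint from every other $1$-handle, so it meets the closure of its complement in $(\Sigma\times\{0\})\cup(\Sigma\times\{1\})$ together with, for each $2$-face $\sigma\supseteq e$, the arc $\gamma_\sigma\cap\partial H_{1,e}$, which is of the form $u_j\times I\subseteq F_{\sigma,e}\subseteq\Delta'\cap H_{1,e}$; hence this intersection lies inside $(\Delta'\cap H_{1,e})\cup(\Sigma\times\{0\})\cup(\Sigma\times\{1\})$, and peeling the $1$-handles one at a time yields a collapse
\[
M^{(1)}\ \searrow\ \bigcup_{v}H_{0,v}\ \cup\ \bigcup_{e}(\Delta'\cap H_{1,e})\ \cup\ \bigcup_{\sigma}C(\sigma);
\]
call the right-hand side $M^{(2)}$. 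Finally, for each $v\in\Delta^{(0)}$ we use $H_{0,v}\searrow\Delta'\cap H_{0,v}$; the point to check is that within $M^{(2)}$ everything attached to $\partial H_{0,v}$ lies inside $\Delta'\cap H_{0,v}$. A $1$-handle remnant $\Delta'\cap H_{1,e}$ with $e\ni v$ meets $\partial H_{0,v}$ in the image under the relevant gluing map of $(c_1*\{u_1,\dots,u_{s(e)}\})\times\{\varepsilon\}$, $\varepsilon\in\{0,1\}$, which by the linearity normalisation is $d_i*\{q_{i,1},\dots,q_{i,s(e)}\}$ for the index $i$ with $H_{1,e}$ glued along $\Sigma_i\subset\partial H_{0,v}$; each arc $d_i*q_{i,\ell}$ is an edge of the triangle $F_{e,v}*q_{i,\ell}$, which is a summand of $F_{\sigma,v}\subseteq\Delta'\cap H_{0,v}$ for the $\ell$-th $2$-face $\sigma$ through $e$. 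A core $C(\sigma)$ with $v\in\sigma$ meets $\partial H_{0,v}$ in the path $p_t=\gamma_\sigma\cap\partial H_{0,v}$, and $p_t\subseteq c_0*p_t\subseteq F_{\sigma,v}\subseteq\Delta'\cap H_{0,v}$. Distinct $0$-handles are disjoint, so peeling them one at a time yields a collapse onto $\bigcup_v(\Delta'\cap H_{0,v})\cup\bigcup_e(\Delta'\cap H_{1,e})\cup\bigcup_\sigma(\Delta'\cap H_{2,\sigma})=\Delta'$, and chaining the three stages gives $M(\Delta)\searrow\Delta'$.

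I expect the main obstacle to be this bookkeeping rather than any isolated difficulty: at each of the three stages one has to verify that the pieces surviving from the higher-index handles — the squares $C(\sigma)$ after the $2$-handles and the remnants $\Delta'\cap H_{1,e}$ after the $1$-handles — touch the lower handles only inside the relevant flaps $F_{\sigma,v}$, $F_{e,v}$, $F_{\sigma,e}$ of $\Delta'$, so that collapsing a lower handle onto its $\Delta'$-part never detaches them. This is precisely where the explicit description of the flaps and the ``linearity'' normalisation of the $1$-handle gluing maps are used.
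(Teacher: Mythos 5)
Your proof is correct and takes essentially the same approach as the paper: collapse the handles in order of decreasing index, using a splicing lemma to glue the per-handle collapses (from \Cref{lem:handle_collapse}) into a global one. The main difference is in the choice of splicing lemma. The paper's \Cref{lem:extending_collapse} requires $Z$ to be disjoint from the interiors of the disks $\sigma_i,\tau_i$ in the collapse sequence, and the paper's proof tracks these disks explicitly. Your fact ``$A\searrow B$ and $A\cap C\subseteq B$ implies $A\cup C\searrow B\cup C$'' is cleaner to state and apply, and is in fact logically equivalent: since $A\setminus B = \bigcup_i\bigl(\mathrm{int}(\tau_i)\cup\mathrm{int}(\sigma_i)\bigr)$, disjointness from all those interiors is the same as $A\cap C\subseteq B$. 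Your verification that each handle's intersection with the closure of its complement sits inside the target of its local collapse (the attaching regions, the arcs $u_j\times I$, the arcs $d_i*q_{i,\ell}$, and the paths $p_t$) matches what the paper checks. The only minor imprecision is the phrase ``the disk it removes is disjoint from $C$'': what one really has is $C\cap\tau_i\subseteq D^{n-1}_i$ (the retained face), not full disjointness of $\tau_i$ from $C$; but you supply the correct statement $A\cap C\subseteq B$, from which this follows.
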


For the proof, it is useful to have a criterion for when a collapse
$X\searrow Y$ extends to a collapse $X\cup Z\searrow Y\cup Z$.
\begin{lem}
\label[lemma]{lem:extending_collapse}Let $X\subset W$ be polyhedra
and let $Z=\mathrm{cl}_{W}(W\setminus X)$. Let $Y\subset X$ be a
subpolyhedron such that $X\searrow Y$ by a sequence of elementary
collapses $X=X_{0}\Searrow X_{1}\Searrow\ldots\Searrow X_{t}=Y$.
For each $0\le i<t$ denote by $\sigma_{i}\subset\tau_{i}$ the disks
such that $X_{i}\Searrow X_{i+1}$ is a collapse across $\tau_{i}$
from $\sigma_{i}$. Assume that $Z$ is disjoint from the interiors
of $\sigma_{i}$ and $\tau_{i}$ for all $i$. Then $X\cup Z\searrow Y\cup Z$.
\end{lem}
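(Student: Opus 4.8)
The plan is to reduce to the case of a single elementary collapse and then argue that the elementary collapse still works in the presence of $Z$. First I would observe that it suffices to prove the statement for $t=1$: once we know that a single elementary collapse $X_0 \Searrow X_1$ with interiors of $\sigma_0, \tau_0$ disjoint from $Z$ extends to $X_0 \cup Z \searrow X_1 \cup Z$, we can iterate. Here the bookkeeping point is that after the first collapse the new complement $Z_1 = \mathrm{cl}_W(W \setminus X_1)$ is $Z \cup \tau_0$ (the disk $\tau_0$ that was removed from $X$ is now part of the closed complement), but the hypothesis that $Z$ avoids the interiors of the later $\sigma_i, \tau_i$ must be supplemented by the observation that these later disks also avoid $\mathrm{int}(\tau_0)$ — and this holds automatically, since $\sigma_i, \tau_i \subset X_{i+1} \subset X_1$ for $i \ge 1$ while $\mathrm{int}(\tau_0) \cap X_1 = \emptyset$ because $\tau_0$ was removed (only its free face $\sigma_0$ and the complementary face survive). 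So the inductive hypothesis applies with complement $Z_1$, and we conclude $X_1 \cup Z \searrow Y \cup Z$, whence $X \cup Z = X_0 \cup Z \searrow X_1 \cup Z \searrow Y \cup Z$.

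For the base case, suppose $X \Searrow Y$ is a single elementary collapse across the disk $\tau$ from the free face $\sigma$, so that $X = Y \cup \tau$, $Y \cap \tau = \mathrm{cl}(\partial \tau \setminus \sigma)$, and the pair $(\tau, \sigma)$ is PL-homeomorphic to $(I^{n-1} \times I, I^{n-1} \times \{0\})$. The hypothesis is that $Z = \mathrm{cl}_W(W \setminus X)$ meets neither $\mathrm{int}(\sigma)$ nor $\mathrm{int}(\tau)$. I claim the same disk $\tau$ witnesses an elementary collapse $X \cup Z \Searrow Y \cup Z$. Indeed, $X \cup Z = (Y \cup Z) \cup \tau$, and we need $(Y \cup Z) \cap \tau$ to be exactly the face $\mathrm{cl}(\partial\tau \setminus \sigma)$ of $\tau$. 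Since $Z$ is disjoint from $\mathrm{int}(\tau)$, we have $Z \cap \tau \subset \partial \tau$; since $Z$ is also disjoint from $\mathrm{int}(\sigma)$, in fact $Z \cap \tau \subset \partial\tau \setminus \mathrm{int}(\sigma) = \mathrm{cl}(\partial\tau \setminus \sigma) \cup \partial\sigma$, which is contained in $\mathrm{cl}(\partial\tau \setminus \sigma)$ (the complementary face already contains $\partial \sigma$). Therefore $(Y \cup Z) \cap \tau = (Y \cap \tau) \cup (Z \cap \tau) = \mathrm{cl}(\partial\tau \setminus \sigma)$, which is the required face of $\tau$. So $X \cup Z \Searrow Y \cup Z$ is a genuine elementary collapse.

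The only genuine subtlety — and the step I expect to require the most care — is the inductive bookkeeping of how the complement evolves: after performing the first collapse we must verify that the hypothesis of the lemma is met for the truncated sequence $X_1 \Searrow \cdots \Searrow Y$ with the enlarged complement $\mathrm{cl}_W(W \setminus X_1) = Z \cup \tau_0$. As noted above this reduces to checking $\mathrm{int}(\tau_0) \cap \sigma_i = \mathrm{int}(\tau_0) \cap \tau_i = \emptyset$ for $i \ge 1$, which is immediate because $\sigma_i, \tau_i$ lie in $X_{i+1} \subseteq X_1$ and $X_1$ is disjoint from $\mathrm{int}(\tau_0)$. One should also make sure the point-set claim $X \cup Z = (Y \cup Z) \cup \tau$ is literally correct, i.e. that removing $\mathrm{int}(\tau) \cup \mathrm{int}(\sigma)$-type material from $X$ and adjoining $Z$ is order-independent; this is clear since $Z \cap \tau \subseteq \partial\tau$ and $\tau \not\subseteq Y \cup Z$ because $\mathrm{int}(\tau) \cap Z = \emptyset$ and $\mathrm{int}(\tau) \cap Y = \emptyset$. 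Everything else is formal manipulation of PL collapses as defined in \cite{RS_PLTop}.
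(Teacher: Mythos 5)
Your base case is correct and matches the paper's argument. The gap is in the inductive bookkeeping, and it is genuine. You keep the ambient polyhedron $W$ fixed and compute the new complement $Z_{1}=\mathrm{cl}_{W}\bigl(W\setminus X_{1}\bigr)=Z\cup\tau_{0}$. You then attempt to verify the lemma's hypothesis for $Z_{1}$ by showing $\mathrm{int}(\tau_{0})\cap\sigma_{i}=\mathrm{int}(\tau_{0})\cap\tau_{i}=\emptyset$ for $i\ge 1$. But this is the wrong direction of disjointness: the hypothesis requires that the \emph{closed} set $Z_{1}$ avoid the \emph{interiors} of $\sigma_{i},\tau_{i}$, so you need $\tau_{0}\cap\mathrm{int}(\sigma_{i})=\tau_{0}\cap\mathrm{int}(\tau_{i})=\emptyset$, and this is generally false. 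For instance, if $W=X=\tau_{0}$ is a single $2$-simplex with $Z=\emptyset$, collapse first across $\tau_{0}$ from one edge $\sigma_{0}$ and then across one of the remaining edges $\tau_{1}$; clearly $\tau_{0}\supset\mathrm{int}(\tau_{1})\ne\emptyset$, so the inductive hypothesis fails with your enlarged complement. A second, independent problem is that even where the hypothesis did hold, the lemma applied to $X_{1}\subset W$ would output $X_{1}\cup Z_{1}\searrow Y\cup Z_{1}$, not $X_{1}\cup Z\searrow Y\cup Z$, and $X_{1}\cup Z_{1}=X_{1}\cup Z\cup\tau_{0}$ is strictly larger than $X_{1}\cup Z$; you would not be able to concatenate this with the base case.

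The paper's proof sidesteps both issues by shrinking the ambient polyhedron rather than enlarging the complement: it sets $W_{1}=X_{1}\cup Z$ and checks that $\mathrm{cl}_{W_{1}}\bigl(W_{1}\setminus X_{1}\bigr)=Z$ (using precisely the fact that $X_{0}\setminus X_{1}$ is $\mathrm{int}(\sigma_{0})\cup\mathrm{int}(\tau_{0})$, which $Z$ avoids). The induction then applies to the shorter sequence with the same $Z$ and the smaller ambient space $W_{1}$, yielding $X_{1}\cup Z\searrow Y\cup Z$, which concatenates with your (correct) base case. You should adopt this change of ambient polyhedron; with it your proof goes through and is essentially the paper's.
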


\begin{proof}
We first prove that $X_{0}\cup Z\Searrow X_{1}\cup Z$. For brevity
denote $\sigma=\sigma_{0}$ and $\tau=\tau_{0}$. We have $X_{0}=X_{1}\cup\tau$
where $X_{1}\cap\tau$ is a face of $\tau$ and $\sigma$ is the complementary
face to $X_{1}\cap\tau$, meaning $\sigma=\mathrm{cl}_{\tau}\left(\partial\tau\setminus X_{1}\cap\tau\right)$.
Hence $X_{0}\cup Z=X_{1}\cup Z\cup\tau$ and 
\[
\left(X_{1}\cup Z\right)\cap\tau=\left(X_{1}\cap\tau\right)\cup\left(Z\cap\tau\right).
\]
By assumption, $Z$ is disjoint from the interiors of $\sigma$ and
$\tau$. We have $Z\cap\tau\subseteq X_{1}\cap\tau$, because $\tau\setminus X_{1}$
consists precisely of the union of the interiors of $\tau$ and $\sigma$
(so $\tau\setminus X_{1}\subseteq\tau\setminus Z$). Hence $\left(X_{1}\cup Z\right)\cap\tau=X_{1}\cap\tau$
is a face of $\tau$ complementary to $\sigma$, and we may collapse
$X_{1}\cup Z\cup\tau=X_{0}\cup Z$ onto $X_{1}\cup Z$.

Observe that $X_{1}\subset X_{1}\cup Z$ are polyhedra, and $Z=\mathrm{cl}_{X_{1}\cup Z}\left(\left[X_{1}\cup Z\right]\setminus X_{1}\right)$:
to see the equality, note that $Z$ is a closed subset of $X_{1}\cup Z$
(which is itself a closed subset of $X\cup Z$) and that $\left[X_{1}\cup Z\right]\setminus X_{1}=\left[X\cup Z\right]\setminus X$,
because the interiors of $\sigma$ and $\tau$ are in $X$. Hence,
setting $W_{1}=X_{1}\cup Z$, we may apply the claim inductively to
the shorter sequence $X_{1}\Searrow\ldots\Searrow X_{n}=Y$, with
$X_{1}\subset W_{1}$ taking the place of $X\subset W$, and the collapse
$X_{1}\searrow Y$ taking the place of $X\searrow Y$. 
\end{proof}
\begin{proof}
[Proof of the proposition]As described above, we order the handles
by decreasing index ($2$-handles first) as $H_{1},\ldots,H_{n}$
(the ordering induced on the $i$-handles for each $0\le i\le2$ is
arbitrary). Then, within $M$ we collapse each handle $H_{i}$ onto
$H_{i}\cap\Delta'\cup\left[H_{i}\cap\bigcup_{j>i}H_{j}\right]$, in
order. We use \Cref{lem:extending_collapse} to show that this is
indeed possible. 

Suppose the handles have been ordered as $H_{1},\ldots,H_{n}$. After
collapsing $H_{1},\ldots,H_{i-1}$ onto their intersections with $\Delta'$
we are left with $M_{i}\coloneqq\Delta'\cup\bigcup_{j\ge i}H_{j}$.
Since $M_{i}=H_{i}\cup\left(M_{i}\setminus H_{i}\right)$, it suffices
to show that $\mathrm{cl}_{M_{i}}\left(M_{i}\setminus H_{i}\right)$
is disjoint from the interiors of the disks removed in the collapse
$H_{i}\searrow H_{i}\cap\Delta'$. We prove this with $\mathrm{cl}_{M_{i}}\left(M_{i}\setminus H_{i}\right)$
replaced by the larger set $M_{i+1}$. Observe that $M_{i+1}$ intersects
the interior of $H_{i}$ only in $\Delta'$ (because distinct handles
of $M$ intersect only in their boundaries). Also, every elementary
collapse in \Cref{lem:handle_collapse} is performed from a face $\sigma$
in $\partial H_{i}$ across a face $\tau$ whose interior is entirely
contained in the interior of $H_{i}$. Therefore, for such an elementary
collapse it suffices to check that $M_{i+1}$ is disjoint from the
interior of $\sigma$. Also, $\Delta'$ is disjoint from the interior
of $\sigma$ (since $\Delta'\cap H_{i}$ is contained in the subpolyhedron
of $H_{i}$ obtained by deleting the interiors of $\sigma$ and of
$\tau$). Hence it suffices to check that for all $j>i$ the handle
$H_{j}$ is disjoint from the interior of $\sigma$. Note that any
two $k$-handles ($k\in\{0,1,2\}$) within $M$ have disjoint boundaries.
Therefore it suffices to verify that the collapse of $H$ described
in \Cref{lem:handle_collapse} occurs from subdisks of $\partial H$
that do not intersect any lower-index handle. But the collapse is
precisely onto the union of $H\cap\Delta'$ with the part of $\partial H$
glued onto the lower-index handles; so no subdisk in the intersection
of $\partial H$ with a lower-index handle is removed in the process.
\end{proof}

\subsection{{\texorpdfstring{Triangulating $M\left(\Delta\right)$}{Triangulating M(Δ)}}}

We construct triangulations of the handles and their subpolyhedra
as defined in \Cref{sec:constructing_M_Delta}. These triangulations
depend on $d=\dim M$ and on $k$ (the bound on vertex links of $\Delta$)
but not on $\Delta$ itself. 

It is convenient to demand that $k$ is even, by increasing it by
$1$ if necessary.

\subsubsection{Triangulating the $0$- and $1$-handles}

Choose a combinatorially triangulated $(d-1)$-sphere $S_{0}$ with
an automorphism group that has an orbit of $k$ vertices, which we
denote $d_{1},\ldots,d_{k}$. We additionally require that for each
$i$ there is an orientation-reversing automorphism that fixes $d_{i}$.
An example of such a triangulation is a join $X*Y$, where $X$ is
a triangulation of $S^{1}$ with $k$ vertices and $Y$ is the boundary
of a $(d-k-1)$-simplex.

Replace $S_{0}$ by its second barycentric subdivision: all automorphisms
are still available because the subdivision respects all symmetries.
Also, the closed stars of $d_{1},\ldots,d_{k}$ are pairwise isomorphic
and disjoint $(d-1)$-disks, which we denote $\Sigma_{1},\ldots,\Sigma_{k}$.
By assumption each $\Sigma_{i}$ has an orientation-reversing automorphism
that fixes the center point $u_{i}$. We choose such automorphisms
$\varphi_{1},\ldots,\varphi_{k}$ such that each pair $\varphi_{i}$,
$\varphi_{j}$ are conjugate via an automorphism of $S_{0}$ taking
$u_{i}$ to $u_{j}$.

Take further barycentric subdivisions until each of the disks $\Sigma_{i}$
has $k$ points in its boundary. These play the role of $q_{i,1},\ldots,q_{i,k}$
in \Cref{sec:constructing_M_Delta}. We demand that $q_{i,1},\ldots,q_{i,k}$
are a disjoint union of $\varphi_{i}$-orbits $\left\{ q_{i,1},q_{i,2}\right\} ,\ldots,\left\{ q_{i,k-1},q_{i,k}\right\} $
for each $i$, and that choices are made symmetrically in the sense
that for each $1\le i,j\le k$ there exists an automorphism $\psi_{i,j}$
of $S_{0}$ taking this set of orbits to the set of $\varphi_{j}$-orbits
$\left\{ \left\{ q_{j,1},q_{j,2}\right\} ,\ldots,\left\{ q_{j,k-1},q_{j,k}\right\} \right\} $.
(This can be done by choosing the points within $\Sigma_{i}$ and
then taking their images under an appropriate isomorphism $\Sigma_{1}\rightarrow\Sigma_{j}$
for each $j>1$). Then, take further barycentric subdivisions until
$S_{0}$ contains a system of pairwise disjoint paths $\left\{ p_{t}\right\} _{t}$
connecting $q_{i_{1},j_{1}}$ with $q_{i_{2},j_{2}}$ for all pairs
with $i_{1}\neq i_{2}$ (where $1\le i_{1},i_{2},j_{1},j_{2}\le k$),
where the paths are disjoint from the interiors of the disks $\Sigma_{1},\ldots,\Sigma_{k}$.
The cone $c_{0}*S_{0}$ is (almost) our reference handle $H_{0}$:
$S_{0}$ will be replaced with a further subdivision during the construction
of the $1$-handles.

At this point, take a copy $\Sigma$ of one of the (pairwise isomorphic)
disks $\Sigma_{i}$. Denote the copies of the points $q_{i,1},\ldots,q_{i,k}$
within $\Sigma$ by $u_{1},\ldots,u_{k}$. There is a cellulation
of $\Sigma\times I$ in which the cells are all products of the form
$\sigma\times\tau$ for $\sigma\in\Sigma$ and $\tau\in I$ (where
$I$ is triangulated by a single edge). Take a combinatorial triangulation
refining this cellulation, in a way that the induced triangulation
of $\Sigma\times\{0\}$ and $\Sigma\times\{1\}$ is precisely the
original triangulation of $\Sigma$. One way of doing this is to stellate
in the interior of each face which is not in $\Sigma\times\{0\}$
or $\Sigma\times\{1\}$, in order of decreasing dimension. In this
triangulation of $\Sigma\times I$ the subpolyhedra $u_{i}\times I$
are pairwise disjoint subcomplexes. Further barycentrically subdivide
this triangulation twice in order to obtain that the simplicial neighborhoods
of the subcomplexes $u_{i}\times I$ are regular neighborhoods. This
is our triangulation of $H_{1}$.

Finally, replace $S_{0}$ with its second barycentric subdivision,
so that the copies of (subdivisions of) $\Sigma_{1},\ldots,\Sigma_{k}$
within it are isomorphic to our triangulation of $\Sigma\times\{0\}$
and $\Sigma\times\{1\}$. Our triangulation of $H_{0}$ is $c_{0}*S_{0}$.
Note that the further subdivision ensures that simplicial neighborhoods
of the paths $\left\{ p_{t}\right\} _{t}$ are pairwise disjoint regular
neighborhoods.
\begin{prop}
$M(\Delta)$ has a triangulation in which each handle $H_{i,\sigma}$
is a subcomplex. The total number of faces in each such subcomplex
is uniformly bounded in terms of $d,k$ (and in particular is independent
of $\Delta$).
\end{prop}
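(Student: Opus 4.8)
The plan is to build the required triangulation of $M(\Delta)$ from the explicit pieces just constructed, one handle‑index at a time. Recall that we have fixed triangulations of the reference $0$-handle $H_{0}=c_{0}*S_{0}$ and of the reference $1$-handle $H_{1}$ (a combinatorial subdivision of $\Sigma\times I$ restricting on $\Sigma\times\{0\}$ and $\Sigma\times\{1\}$ to the chosen triangulation of $\Sigma$), each with a number of faces bounded in terms of $d$ and $k$ alone, and that $S_{0}$ was triangulated so that each disk $\Sigma_{i}\subset S_{0}$ carries a triangulation isomorphic to that of $\Sigma$, with the symmetries $\varphi_{i},\psi_{i,j}$ available. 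First I would glue copies $\{H_{0,v}\}_{v}$ and $\{H_{1,e}\}_{e}$ along the maps prescribed in the construction of $M_{1}$: since each such map identifies $\Sigma\times\{0\}$ or $\Sigma\times\{1\}$ with some $\Sigma_{i}$, and these carry isomorphic triangulations, the gluing maps may be taken to be simplicial isomorphisms, and the symmetric choices built into $S_{0}$ (the orbits $\{q_{i,1},q_{i,2}\},\dots$ and the automorphisms $\varphi_{i},\psi_{i,j}$) guarantee that compatible such choices can be made for all edges at every vertex at once, inducing opposite orientations where required. The outcome is a triangulation of $M_{1}$ in which every $H_{0,v}$ and every $H_{1,e}$ is a subcomplex with boundedly many (in $d,k$) faces.

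Next I would treat the loops $\gamma_{\sigma}$, $\sigma\in\Delta^{(2)}$. By construction each $\gamma_{\sigma}$ is a concatenation of paths $p_{t}$ (subcomplexes of various $\partial H_{0,v}$, by our triangulation of $S_{0}$) and segments $u_{i}\times I$ (subcomplexes of various $\partial H_{1,e}$), meeting at the vertices $q_{i,j}$; hence $\gamma_{\sigma}$ is a subcomplex of $\partial M_{1}$, and distinct $\gamma_{\sigma}$ are disjoint. Passing to a bounded number of barycentric subdivisions of $M_{1}$ — which keeps each handle a subcomplex and only multiplies face counts by a constant depending on $d$ — I may assume that the simplicial neighborhoods $N_{\sigma}$ of the $\gamma_{\sigma}$ in $\partial M_{1}$ are regular neighborhoods and are pairwise disjoint. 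Since $M_{1}$, and hence $\partial M_{1}$, is orientable, \Cref{fact:2_core_extension} identifies each $N_{\sigma}$ with a standard solid torus $S^{1}\times D^{d-2}$ having $\gamma_{\sigma}$ as its core. As $\gamma_{\sigma}$ traverses only the handles indexed by the faces of the triangle $\sigma$, the neighborhood $N_{\sigma}$ is contained in the union of the three $0$-handles and three $1$-handles indexed by the vertices and edges of $\sigma$, and therefore has boundedly many (in $d,k$) faces.

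Finally I would attach the $2$-handles. Fix the reference $2$-handle $I^{2}\times I^{d-2}$, with attaching region $\partial I^{2}\times I^{d-2}\cong S^{1}\times D^{d-2}$ and core disk $I^{2}\times\{(\tfrac12,\dots,\tfrac12)\}$. For each $\sigma$, choose a PL homeomorphism $N_{\sigma}\cong\partial I^{2}\times I^{d-2}$ (possible by the previous paragraph) and transport the triangulation of $N_{\sigma}$ to a triangulation of $\partial I^{2}\times I^{d-2}$ in which the core circle $\partial I^{2}\times\{(\tfrac12,\dots,\tfrac12)\}$ is a subcomplex (this is automatic, as $\gamma_{\sigma}\subset N_{\sigma}$ is). By standard PL techniques — extending a triangulation of a codimension‑zero boundary patch over a PL manifold, equivalently realizing the $2$-handle as a simplicial regular neighborhood of its core disk $c*\gamma_{\sigma}$ (cf.\ \Cref{lem:handle_collapse} and \cite[Ch.~3]{RS_PLTop}), built directly from the triangulation of $\gamma_{\sigma}$ — this extends to a triangulation $H_{2,\sigma}$ of the whole $2$-handle, with a number of faces bounded in terms of $|N_{\sigma}|$, hence in terms of $d,k$. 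Glue each $H_{2,\sigma}$ to $M_{1}$ by the induced simplicial identification of $\partial I^{2}\times I^{d-2}$ with $N_{\sigma}$; as the $N_{\sigma}$ are pairwise disjoint subcomplexes of $\partial M_{1}$, these gluings are mutually compatible and produce a triangulation of $M(\Delta)=M_{1}\cup\bigcup_{\sigma}H_{2,\sigma}$ in which each handle $H_{i,\sigma}$ is a subcomplex with boundedly many (in $d,k$) faces. (For infinite $\Delta$ the same works, the resulting complex being locally finite.) The incidence assertion $H_{i,\sigma}\cap H_{j,\tau}\neq\emptyset\iff\sigma\cap\tau\neq\emptyset$ then follows, since $H_{2,\sigma}$ meets only the handles of faces of $\sigma$, and the $0$- and $1$-handle incidences already reproduce those of $\Delta^{\le1}$.

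The step I expect to be the main obstacle is producing, with controlled size, a triangulation of the $2$-handle that restricts to the prescribed — not freely chosen — triangulation of its attaching region $N_{\sigma}$; this is exactly where PL handle theory is used (via \Cref{fact:2_core_extension} to recognize $N_{\sigma}$ as the correct PL solid torus, and a regular‑neighborhood construction for the extension and its size bound). A secondary, more clerical difficulty is the verification in the first step that all the handle‑gluing maps can be realized simultaneously as simplicial isomorphisms; this is precisely what the elaborate symmetric construction of $S_{0}$ was designed to make possible.
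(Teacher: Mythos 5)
Your overall strategy matches the paper's: triangulate $M_1$ by simplicial gluings of the reference handles, observe that each $\gamma_\sigma$ is a subcomplex of $\partial M_1$ with a regular simplicial neighborhood $N_\sigma$ of bounded size, and extend the prescribed triangulation of $N_\sigma$ across the $2$-handle. You also correctly identify the extension step as the main obstacle. However, the mechanism you propose for it — realizing $H_{2,\sigma}$ as a simplicial regular neighborhood of the cone $c*\gamma_\sigma$ ``built directly from the triangulation of $\gamma_\sigma$'' — does not obviously produce a triangulation whose restriction to $\partial I^2\times I^{d-2}$ equals the given triangulation of $N_\sigma$: a simplicial regular neighborhood comes with its own induced boundary triangulation, and you have no control over whether it matches $N_\sigma$. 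What is needed is a theorem about extending a triangulation prescribed on part of the boundary across the interior; the paper cites Armstrong's result on extending triangulations for exactly this.

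Separately, your asserted bound ``a number of faces bounded in terms of $|N_\sigma|$'' is stated without justification, and extension theorems of this kind do not automatically come with size estimates. The paper sidesteps this cleanly: since each $N_\sigma$ is a subcomplex of a union of at most six handles each with boundedly many faces, there are only \emph{finitely} many possible simplicial isomorphism types $\Gamma_1,\dots,\Gamma_N$ for $N_\sigma$ (the number depending only on $d,k$); one then fixes, once and for all, a triangulated $2$-handle extending each $\Gamma_i$, and boundedness of the face count is automatic from finiteness of the list. Without this finiteness argument (or an explicit quantitative extension theorem, which you neither cite nor prove), your size bound is a genuine gap. One more small point: you append the incidence claim $H_{i,\sigma}\cap H_{j,\tau}\neq\emptyset\iff\sigma\cap\tau\neq\emptyset$, but that is part (1) of \Cref{thm:handle_construction}, not part of the proposition under proof.
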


\begin{proof}
It is possible to glue together $M_{1}$ from our triangulations of
$H_{0}$ and $H_{1}$, where the gluing maps are simplicial isomorphisms
gluing $\Sigma\times\{0\}$ or $\Sigma\times\{1\}$ in a copy of $H_{1}$
onto some $\Sigma_{i}$ in a copy of $H_{0}$, because there are simplicial
isomorphisms $\Sigma\times\{0\}\overset{\sim}{\rightarrow}\Sigma_{i}$
and $\Sigma\times\{1\}\overset{\sim}{\rightarrow}\Sigma_{i}$. In
fact, there are at least two such simplicial isomorphisms, with opposite
orientations - given one, a second can be obtained by composing with $\varphi_{i}$.
So the requirements on orientation in $M_{1}$ can also be met: by
construction of $q_{i,1},\ldots,q_{i,k}$ as a $\varphi_{i}$-invariant
set, both of these isomorphisms map $u_{1},\ldots,u_{k}\in\partial\left(\Sigma\times\{0\}\right)$
to $q_{i,1},\ldots,q_{i,k}\in\partial\Sigma_{i}$.

Note that the triangulation of $M_{1}$ we have obtained satisfies
the conditions of the proposition, and it remains to triangulate the
$2$-handles in some way. Each $2$-handle $H_{2,\sigma}$ is glued
onto a regular neighborhood of a path $\gamma_{\sigma}$ within $\partial M_{1}$,
which in our triangulation is a subcomplex. In fact, by construction
our triangulation of $M_{1}$ is the second barycentric subdivision
of a complex in which $\gamma_{\sigma}$ is a subcomplex; in $\partial M_{1}$,
its simplicial neighborhood is a regular neighborhood (and this regular
neighborhood of $\gamma_{\sigma}$ is disjoint from the regular neighborhood
of $\gamma_{\tau}$ for any $2$-face $\tau\neq\sigma$).

The simplicial neighborhood of $\gamma_{\sigma}$ in $\partial M_{1}$
has one of only a finite number of isomorphism types. To see this,
observe that if $\sigma=\{u,v,w\}$ then the simplicial neighborhood
of $\gamma_{\sigma}$ is a subcomplex of our triangulation of 
\[
H_{0,u}\cup H_{0,v}\cup H_{0,w}\cup H_{1,\{u,v\}}\cup H_{1,\{v,w\}}\cup H_{1,\{w,u\}},
\]
which is a union of a bounded number of handles and hence has a uniformly
bounded number of faces; there are only finitely many such isomorphism
types of subcomplexes of simplicial complexes (and their number depends
only on $k$ and $d$). Let $\Gamma_{1},\ldots,\Gamma_{N}$ be the
total number of possible isomorphism types of simplicial neighborhoods
of the paths $\gamma_{\sigma}\subset\partial M_{1}$. For each isomorphism
type $\Gamma_{i}$ fix a triangulation of a $2$-handle $I^{2}\times I^{d-2}$
satisfying that the induced triangulation of $\partial I^{2}\times I^{d-2}$
is isomorphic to $\Gamma_{i}$: this is possible by \cite{Armstrong_extending_triangulations}.
Finiteness ensures that the number of faces in the triangulation of
each $2$-handle can be bounded in terms of $d$ and $k$ alone.
\end{proof}

\section{Topology of the boundary}

\label[section]{sec:boundary}

The goal of this section is to prove the following theorem.
\begin{thm}
\label[theorem]{thm:boundary}Let $M$ be a connected manifold.
\begin{enumerate}
\item If $M$ is PL-collapsible then $\partial M$ is a sphere.
\item If $\partial M$ is connected and $R$ is a PID such that $H_{k}\left(M;R\right)=0$
for all $k>0$ then $\partial M$ is a triangulated $R$-homology
sphere.
\item If $M$ is contractible, $\dim M\ge5$, and $M$ has a handle structure
consisting only of 0-, 1-, and 2-handles, then $\partial M$ is a
homotopy sphere.
\end{enumerate}
\end{thm}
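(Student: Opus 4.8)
The plan is to handle the three parts in the order (1), (2), (3), with (2) carrying most of the weight and (3) built on top of it.

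\smallskip

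For (1), the content is the classical fact (Whitehead) that a collapsible compact PL manifold is a PL ball. Concretely, if $M\searrow\{p\}$ then $M$ is a regular neighbourhood of the point $p$ inside $M$ — this is exactly the criterion of \cite[Cor.~3.30]{RS_PLTop} used above, since $M$ is a compact manifold-with-boundary collapsing onto the subpolyhedron $\{p\}$ — and a regular neighbourhood of a point in a PL manifold is a PL ball. Hence $M\cong D^{d}$ and $\partial M\cong S^{d-1}$; if one starts from a combinatorial triangulation this is moreover a combinatorial sphere, because the boundary of a combinatorial ball is a combinatorial sphere.

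\smallskip

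For (2), the hypothesis says $\tilde H_{*}(M;R)=0$, i.e. $H_{0}(M;R)=R$ and $H_{k}(M;R)=0$ for $k\ge1$. The universal coefficient theorem over the PID $R$ then gives $H^{0}(M;R)=R$ and $H^{j}(M;R)=0$ for $j\ge1$ (the only $\mathrm{Ext}$ term that could enter, in degree $1$, is $\mathrm{Ext}^{1}_{R}(R,R)=0$). Assuming $M$ is $R$-orientable — automatic when $\mathrm{char}\,R=2$, and true in every situation where we apply the theorem, since our manifolds $M(\Delta)$ are built by orientable gluings — Lefschetz duality gives $H_{k}(M,\partial M;R)\cong H^{d-k}(M;R)$, so $H_{d}(M,\partial M;R)\cong R$ and $H_{k}(M,\partial M;R)=0$ for $k\ne d$. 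Now run the long exact sequence of $(M,\partial M)$ with $\tilde H_{*}(M;R)=0$: for every $k\ge1$ one extracts $H_{k}(\partial M;R)\cong H_{k+1}(M,\partial M;R)$, while the tail $0=H_{1}(M,\partial M)\to H_{0}(\partial M)\to H_{0}(M)\to H_{0}(M,\partial M)\to0$ together with connectedness of $\partial M$ (which makes $H_{0}(\partial M;R)\to H_{0}(M;R)$ an isomorphism) pins down degrees $0$ and $1$. The result is $H_{0}(\partial M;R)\cong H_{d-1}(\partial M;R)\cong R$ and $H_{k}(\partial M;R)=0$ for $0<k<d-1$, i.e. $\partial M$ has the $R$-homology of $S^{d-1}$; being a boundary it carries a triangulation, so it is a triangulated $R$-homology sphere.

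\smallskip

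For (3), contractibility of $M$ gives $\tilde H_{*}(M;\mathbb{Z})=0$, $\pi_{1}(M)=1$, and in particular $M$ is orientable. View $M$ as a cobordism from $\emptyset$ to $\partial M$ carrying its handle structure of indices $\le2$ and pass to the dual handle structure: then $M$ is obtained from a collar $\partial M\times I$ by successively attaching handles of indices $d-2,\,d-1,\,d$, all of which are $\ge3$ precisely because $d\ge5$ (this is where the dimension hypothesis is used). Attaching a handle of index $\ge2$ cannot connect components, so $\partial M$ is connected since $M$ is; hence part~(2) applies with $R=\mathbb{Z}$ and $\partial M$ is an integral homology $(d-1)$-sphere. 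Moreover attaching a handle of index $\ge3$ does not change $\pi_{1}$ (up to homotopy it attaches a cell of dimension $\ge3$), so $\pi_{1}(\partial M)\cong\pi_{1}(\partial M\times I)\cong\pi_{1}(M)=1$. Finally, a simply connected integral homology $(d-1)$-sphere with $d-1\ge4$ is a homotopy sphere: by Hurewicz it is $(d-2)$-connected with $\pi_{d-1}\cong H_{d-1}\cong\mathbb{Z}$, a generator is represented by a map $S^{d-1}\to\partial M$ inducing an isomorphism on all (reduced) homology, and this map is a homotopy equivalence by Whitehead's theorem since both spaces are simply connected.

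\smallskip

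The only genuinely substantive step is the $\pi_{1}$-computation in (3) via the dual handle decomposition; parts (1) and (2) are formal once one has Whitehead's characterization of collapsible manifolds and Lefschetz duality. The single point needing care is the orientability input in (2): the statement should be read with $M$ (equivalently $\partial M$) taken to be $R$-orientable — this is free when $\mathrm{char}\,R=2$ and in all of our applications — since without it the bookkeeping, and indeed the conclusion, can genuinely fail.
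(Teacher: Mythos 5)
Your parts (1) and (3) are correct; part (1) matches the paper's proof, while part (3) takes a genuinely different and arguably slicker route. The paper proves (3) by going \emph{forward} through the handle decomposition: it reads off $\partial M$ as obtained from $S^{d-1}$ by a sequence of surgeries and shows inductively, citing Milnor's lemma on surgery below the middle dimension, that each inclusion $\partial M_j \hookrightarrow M_j$ induces a $\pi_1$-isomorphism. You instead pass to the \emph{dual} handle decomposition, in which $M$ is built from a collar $\partial M \times I$ by attaching handles of index $\ge d-2 \ge 3$; since cells of dimension $\ge 3$ change neither $\pi_0$ nor $\pi_1$, the connectedness and simple connectivity of $\partial M$ drop out at once, and the hypothesis $d\ge 5$ enters in exactly the right place. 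Your version avoids the surgery bookkeeping altogether; the paper's forward version leans on one small lemma of Milnor rather than on the formality of turning a cobordism upside down. Either works.

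For part (2), however, you prove a weaker statement than the one asserted. You take $R$-orientability of $M$ as an additional standing hypothesis (justified by inspecting the manifolds $M(\Delta)$ built later in the paper) and write that the theorem ``should be read'' with it, since the conclusion ``can genuinely fail'' otherwise. The theorem as stated carries no orientability assumption, and the paper's proof \emph{derives} it rather than assumes it: connectedness of $\partial M$ is fed into the long exact sequence to conclude $H_1(M,\partial M;R)=0$, from which the paper infers that $(M,\partial M)$ is $R$-orientable, and only then invokes Lefschetz duality. The remark immediately after the theorem, which your proof does not engage with, shows the authors are alert to exactly this point: their counterexample $\mathbb{RP}^2\times I$ over $\mathbb{Q}$ has \emph{disconnected} boundary, and they note the connectedness hypothesis can be traded for ``$2$ not invertible in $R$.'' The paper's one-sentence inference from $H_1(M,\partial M;R)=0$ to $R$-orientability is admittedly terse, and if you think it is incomplete that is worth saying explicitly; but simply adding the hypothesis, and asserting without a witness (one satisfying \emph{all} the stated hypotheses, including connected boundary) that the stronger claim is false, leaves a real gap between your proof and the theorem.
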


\begin{rem}
All three parts are routine or known, but we include the proof for
completeness. The assumption that $\partial M$ is connected in (2)
is necessary: consider $\mathbb{RP}^{2}\times I$ over $R=\mathbb{Q}$.
One can replace it with the assumption that $2$ is not invertible
in $R$.
\end{rem}

\begin{proof}
Denote $d=\dim M$.
\begin{enumerate}
\item If $M$ is PL-collapsible then $M$ is a PL disk by a theorem of Whitehead
(see \cite[Corollary 3.28]{RS_PLTop}). Therefore $\partial M$ is
a sphere.
\item Consider the long exact sequence
\[
\ldots\rightarrow H_{k}\left(M;R\right)\rightarrow H_{k}\left(M,\partial M;R\right)\rightarrow H_{k-1}\left(\partial M;R\right)\rightarrow H_{k-1}\left(M;R\right)\rightarrow\ldots
\]
of the pair $\left(M,\partial M\right)$. The assumption that $H_{k}\left(M;R\right)=0$
for all $k>0$ implies that $H_{k}\left(M,\partial M;R\right)\cong H_{k-1}\left(\partial M;R\right)$
for all $k>1$.

Observe that around $k=1$ the sequence reads
\[
0=H_{1}\left(M;R\right)\rightarrow H_{1}\left(M,\partial M;R\right)\rightarrow H_{0}\left(\partial M;R\right)\rightarrow H_{0}\left(M;R\right)\rightarrow0=H_{0}\left(M,\partial M;R\right)
\]
where $H_{0}\left(M;R\right)\simeq R$ and $H_{0}\left(\partial M;R\right)\simeq R$
by the assumption that $\partial M$ is connected. Hence $H_{1}\left(M,\partial M;R\right)=0$
so that the pair $\left(M,\partial M\right)$ is $R$-orientable,
$H_{d}\left(M,\partial M;R\right)\simeq R$, and Lefschetz duality
applies.

By Lefschetz duality $H_{k}\left(M,\partial M;R\right)\simeq H^{d-k}\left(M;R\right)$.
The universal coefficient theorem for cohomology states that for each
$k$ there is an exact sequence of the form 
\[
0\rightarrow\mathrm{Ext}_{R}^{1}\left(H_{k-1}\left(M;R\right),R\right)\rightarrow H^{k}\left(M;R\right)\rightarrow\mathrm{Hom}_{R}\left(H_{k}\left(M;R\right),R\right)\rightarrow0.
\]
If $k>0$ we have $\mathrm{Hom}_{R}\left(H_{k}\left(M;R\right),R\right)=\mathrm{Hom}_{R}\left(0,R\right)=0$.
Further, if $k>1$ we have 
\[
\mathrm{Ext}_{R}^{1}\left(H_{k-1}\left(M;R\right),R\right)=\mathrm{Ext}_{R}^{1}\left(0,R\right)=0,
\]
while if $k=1$ we have
\[
\mathrm{Ext}_{R}^{1}\left(H_{k-1}\left(M;R\right),R\right)=\mathrm{Ext}_{R}^{1}\left(R,R\right)=0
\]
since $R$ is free. Therefore
\[
0=H^{k}\left(M;R\right)\simeq H_{d-k}\left(M,\partial M;R\right)\simeq H_{d-k-1}\left(\partial M;R\right)\text{ for all \ensuremath{0<k<d-1}}.
\]
(For $k=d-1$ this fails because $0=H_{1}\left(M,\partial M;R\right)\not\simeq H_{0}\left(\partial M;R\right)\simeq R$,
as already observed above).

Finally, $H_{d-1}\left(\partial M;R\right)\simeq H_{d}\left(M,\partial M;R\right)\simeq R$,
where the last isomorphism holds because $\left(M,\partial M\right)$
is $R$-orientable.
\item By part (2), $\partial M$ is a $\mathbb{Z}$-homology sphere. A simply
connected homology sphere is a homotopy sphere, so it suffices to
prove $\partial M$ is simply-connected.

By eliminating the $0$-handles beyond the first from the given handle
decomposition and reordering, we may assume that $M$ is obtained
from a disk $D$ (the unique $0$-handle) by attaching a collection
of mutually disjoint $1$-handles and then a collection of $2$-handles
which are disjoint from each other (but not from the $1$-handles):
there is a sequence of the form
\[
D=M_{0},M_{1},\ldots,M_{r},M_{r+1},\ldots,M_{r+s}=M,
\]
such that $M_{i+1}$ is obtained from $M_{i}$ by adding a $1$-handle
for each $i<r$, and $M_{r+i+1}$ is obtained from $M_{r+i}$ by adding
a $2$-handle for $i<s$. The boundary $\partial M$ can be described
by a sequence of surgeries on the boundary $S^{d-1}$ of the disk
$D$, one for each $k$-handle added to $M$. More explicitly, given
an attaching map $f_{j}:S^{k-1}\times D^{d-k}\rightarrow\partial M_{j}$
of a $k$-handle to $M_{j}$, $\partial M_{j+1}$ can be described
as the result of the surgery step that removes $f_{j}\left(S^{k-1}\times\mathrm{int}\left(D^{d-k}\right)\right)$
and glues a copy of $D^{k}\times S^{d-k-1}$ onto $f_{j}\left(S^{k-1}\times S^{d-k-1}\right)$. 

The inclusion map $\partial M_{j}\hookrightarrow M_{j}$ induces an
isomorphism on fundamental groups for each $j$. Choose some $x\in\partial D$
such that $x$ is not in the image of any $f_{j}$ (and hence $x\in\partial M_{j}$
for each $j$). We prove the claim by induction on $j$: for $j=0$
it is clear. If $j+1\le r$ then $M_{j+1}$ is obtained by adding
a $1$-handle to $M_{j}$, and $\partial M_{j+1}$ is obtained by
removing two disjoint open disks from $\partial M_{j}$ and gluing
a cylinder $S^{d-2}\times I$ between their boundaries. Both steps
add a free generator to the fundamental group, so that $\pi_{1}\left(M_{r},x\right)\simeq F_{r}\simeq\pi_{1}\left(\partial M_{r},x\right)$,
and the inclusion $\partial M_{r}\hookrightarrow M_{r}$ induces such
an isomorphism. If $r<j+1\le r+s$ then a $2$-handle is added to
$M_{j}$ along a map $f_{j}:S^{1}\times D^{d-2} \rightarrow \partial M_j$, which annihilates
the conjugacy class of the restriction to $S^{1}\times\left\{ 0\right\} $
in $\pi_{1}\left(M_{j},x\right)$. Since $\dim\partial M_{j}\ge4$,
the corresponding surgery on $\partial M_{j}$ has precisely the same
effect of annihilating the conjugacy class of the corresponding element
in $\pi_{1}\left(\partial M_{j},x\right)$: see \cite[Lemma 2]{Milnor_killing_homotopy_groups}.
\end{enumerate}
\end{proof}

\section{Encoding facet colorings in manifold triangulations}

\label[section]{sec:encoding_facet_colorings}

Let $\Delta$ be a triangulated $d$-dimensional homology manifold
and $c:\Delta^{(d)}\rightarrow\left[r\right]$ a coloring of the facets
with $r$ colors. We wish to encode the pair $\left(\Delta,c\right)$
in a single triangulation of a homology manifold $\Delta^{\prime}$,
in such a way that $\Delta^{\prime}$ is not much larger than $\Delta$.
In this section we provide such a construction and prove some of its
basic properties.

Note that for many purposes (e.g. asymptotic enumeration) it would
suffice to work with colored triangulations: so long as the number
of colors $r$ is uniformly bounded, the number of pairs $\left(\Delta,c\right)$
is larger than the number of triangulated manifolds $\Delta$ with
$N$ facets by only an exponential factor.
\begin{defn}
We call a pair $\left(\Delta,c\right)$ of a $d$-dimensional homology
manifold $\Delta$ (without boundary) and a facet coloring $c:\Delta^{(d)}\rightarrow\left[r\right]$
an\emph{ $r$-colored triangulated homology $d$-manifold. }An isomorphism
of $r$-colored triangulated homology $d$-manifolds $\left(\Delta_{1},c_{1}\right)\overset{\sim}{\rightarrow}\left(\Delta_{2},c_{2}\right)$
is a simplicial isomorphism $\phi:\Delta_{1}\rightarrow\Delta_{2}$
such that for each facet $\sigma$ of $\Delta_{1}$ it holds that
$c_{2}\left(\phi(\sigma)\right)=c_{1}\left(\sigma\right)$.
\end{defn}

\begin{thm}
\label[theorem]{thm:color_encoding}For each $d,r\in\mathbb{N}$ with
$d\ge2$ there is a constant $m\in\mathbb{N}$ and a function 
\[
\mathrm{Enc}_{d,r}:\left\{ \text{\ensuremath{r}-colored triangulated homology \ensuremath{d}-manifolds}\right\} \rightarrow\left\{ \text{triangulated homology \ensuremath{d}-manifolds}\right\} 
\]
such that if $\Delta^{\prime}=\mathrm{Enc}_{d,r}\left(\Delta,c\right)$
then:
\begin{enumerate}
\item $\Delta^{\prime}$ is a subdivision of $\Delta$ in which each facet
is replaced with a combinatorial $d$-disk on at most $m$ facets.
In particular, there is a PL homeomorphism $\Delta^{\prime}\simeq\Delta$
and $\left|\Delta^{\prime(d)}\right|\le m\left|\Delta^{(d)}\right|$.
\item If each vertex of $\Delta$ is incident to at most $n$ edges, then
each vertex of $\Delta^{\prime}$ is incident to at most $mn$ edges,
\item In addition, $\mathrm{Enc}_{d,r}$ is injective on isomorphism classes.
\end{enumerate}
\end{thm}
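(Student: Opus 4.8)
The plan is to realize $\mathrm{Enc}_{d,r}$ as ``subdivide, then stamp a private marker into the deep interior of each facet''. Fix the standard $d$-simplex $\Delta^d$ and a bounded number $k$ of iterated edge subdivisions (stellar subdivisions at edge midpoints): the complex $e^k(\Delta^d)$ is a combinatorial $d$-disk which, for $k$ large enough, contains in the interior of $\Delta^d$ a subcomplex $B$ that is a $d$-ball with $\partial B$ disjoint from $\partial\Delta^d$. Choose once and for all $r$ triangulated $d$-balls $B_1,\dots,B_r$, all with the same boundary $\partial B$, pairwise ``recognizably distinct'' in a sense specified below. For a colored triangulated homology $d$-manifold $(\Delta,c)$, let $\Delta' = \mathrm{Enc}_{d,r}(\Delta,c)$ be obtained from $e^k(\Delta)$ (which subdivides each facet $\sigma$ of $\Delta$ by a copy of $e^k(\Delta^d)$) by replacing, inside each facet-copy $e^k(\sigma)$, the central ball $B$ with a copy of $B_{c(\sigma)}$. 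Since $e^k(-)$ is canonical and the surgery only touches the interiors of the $B$'s, $\Delta'$ is a well-defined subdivision of $\Delta$, independent of any choices.

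Parts (1) and (2) then come out by routine checks. The complex $\Delta'$ is a subdivision of $\Delta$, hence PL-homeomorphic to it, and (since subdivision preserves local homology) a triangulated homology $d$-manifold; each facet of $\Delta$ has been replaced by a combinatorial $d$-disk (a subdivision of a simplex) whose number of facets is bounded by a constant $m = m(d,r)$ (the number of facets of $e^k(\Delta^d)$, with $B$'s facets replaced by those of the largest $B_j$), and $|\Delta'^{(d)}| \le m|\Delta^{(d)}|$. For the degree bound, the key point is that $e^k(-)$ only multiplies vertex degrees by a constant factor depending on $k$ and $d$ — an edge subdivision replaces the midpoint of $vw$ by a vertex adjacent only to $v$, $w$ and the midpoints of edges sharing an endpoint with $vw$, so a vertex of degree $N$ becomes one of degree at most $2N+2$ — and the ball-surgeries are confined to the interiors of the facets, so they only affect degrees of new (hence bounded-degree) vertices. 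Taking $m$ large enough absorbs all constants, giving degree $\le mn$.

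The real content is part (3), injectivity on isomorphism classes. Here the approach is: (i) make the markers $B_1,\dots,B_r$ intrinsically recognizable, so that any simplicial isomorphism $\Delta_1' \cong \Delta_2'$ must carry each stamped copy of $B_j$ in $\Delta_1'$ onto a stamped copy of $B_j$ in $\Delta_2'$; (ii) show that the rest of $\Delta'$ then determines $e^k(\Delta)$, and hence $\Delta$. For (i) one equips each $B_j$ with a combinatorial anomaly that cannot occur in $e^k(\Delta)$ nor be created by the way facets are glued — for instance, apply (in addition to $e^k$) a fixed barycentric-type normalization inside the $B$-region so that everything there becomes flag, then let $B_j$ contain a single vertex whose link is a non-flag $(d-1)$-sphere of a prescribed type $L_j$, with $L_1,\dots,L_r$ pairwise non-isomorphic (for $d=2$ one replaces ``non-flag'' by another bounded link-invariant unavailable elsewhere); crucially the anomaly lives strictly inside a facet, so gluing cannot manufacture it. Then the stamped $B_j$'s are exactly the subcomplexes around these anomalous vertices, the complement inside each facet recovers $e^k(\Delta^d)$, and undoing the (invertible) edge subdivisions — the original vertices of $\Delta$, and which subsets of them span faces, being read off from the recovered $e^k(\Delta)$ — recovers $\Delta$ together with its coloring. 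Chasing an isomorphism $\Delta_1' \cong \Delta_2'$ through this recovery yields a color-preserving isomorphism $(\Delta_1,c_1)\cong(\Delta_2,c_2)$.

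The main obstacle is precisely step (i): guaranteeing that the planted marker is a \emph{global} invariant of $\Delta'$, i.e. that it occurs nowhere except where it was stamped. The ambient complex $\Delta$ being arbitrary, this forces one to first ``normalize'' the local structure around the surgery sites (so that, away from the markers, all relevant links are of a restricted, marker-free type) while simultaneously respecting the linear degree bound; the tension between these requirements — barycentric normalization being degree-expensive in general — is what dictates confining both the normalization and the markers to the bounded central balls, where degrees are automatically controlled. A subsidiary obstacle is the invertibility of the subdivision: one must reconstruct $\Delta$, up to isomorphism, from $e^k(\Delta)$ — equivalently, recognize intrinsically which vertices of $e^k(\Delta)$ are original and how they are joined — which is true but needs a short argument (e.g. via the special suspension-like form of the links of midpoint vertices).
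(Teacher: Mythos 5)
Your construction differs genuinely from the paper's, but the approach has a gap in exactly the place you flag as ``the main obstacle'', and the fix you sketch does not close it. You propose planting markers $B_j$ recognizable by an exotic combinatorial invariant (a non-flag vertex link of prescribed type $L_j$), and you observe correctly that for this to work the rest of $\Delta'$ must be guaranteed ``marker-free''. You then propose to confine the normalization to the bounded central balls for degree reasons. But these two requirements are genuinely in conflict: if normalization is confined to the central balls, then away from them the complex is $e^k(\Delta)$, whose vertex links depend on the arbitrary input $\Delta$. An original vertex of $\Delta$ can have an arbitrary triangulated $(d-1)$-sphere as its link, and after iterated edge subdivision its link is an arbitrary iterated edge subdivision of such a sphere. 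There is no fixed finite list of ``anomalies'' guaranteed to never appear among these; in particular, some $v \in \Delta^{(0)}$ might already have link isomorphic to $L_j$, and stamping cannot retroactively remove that coincidence. Absent a global normalization (which you rule out for degree reasons), step (i) does not go through, and nothing downstream of it is salvaged.

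The paper sidesteps this entirely by choosing an invariant that \emph{is} globally controllable after a bounded-degree-preserving normalization: it takes one barycentric subdivision, which forces \emph{every} vertex to have degree at least $d+2$ (Lemma 5.4 / \texttt{lem:barycentric\_d\_plus\_2}); the stamps are then iterated stellar subdivisions at facets, which introduce only vertices of degree exactly $d+1$. Greedy reverse-stellation at degree-$(d+1)$ vertices recovers the barycentric subdivision together with the stamps (Lemma 5.7 / \texttt{lem:coloring\_d\_plus\_2} and its Claim), and the main theorem of Finney recovers $\Delta$ from $b(\Delta)$ (after enriching the palette with the dimension-of-barycenter data to handle Finney's exceptional cases). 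This degree dichotomy is a genuine global invariant, robust against any choice of $\Delta$, which is precisely what your link-anomaly invariant fails to be.

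Two smaller issues. First, your subsidiary obstacle (reconstructing $\Delta$ up to isomorphism from $e^k(\Delta)$) is asserted with a gesture at ``suspension-like links of midpoint vertices'' but not proved; the paper instead cites Finney for barycentric subdivision, which you cannot use since you avoid barycentric subdivision precisely for degree reasons, so you would owe an independent argument. Second, your degree estimate ``$N \mapsto 2N+2$ per edge subdivision'' is not quite right as stated (a vertex $v$ becomes adjacent to midpoints of every edge of $\mathrm{lk}(v)$, so the multiplicative factor depends on $d$), but this is repairable and is not the essential problem.
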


\begin{rem}
The functions $\mathrm{Enc}_{d,r}$ we obtain are easy to compute,
and so are their left inverses.
\end{rem}

\begin{summary*}
The idea of the proof is to subdivide $\Delta$, and replace each
of its facets $\sigma$ with a PL-triangulated disk $\sigma^{\prime}$
such that the triangulation of $\sigma^{\prime}$ encodes the color
$c(\sigma)$. It is automatic that condition (1) of the theorem holds
for such a construction. Since we only need a finite ``palette''
of disk triangulations $\sigma^{\prime}$, condition (2) holds as
well. There is a mild difficulty with condition (3): it is not clear
that $\Delta$ can be reconstructed from the resulting complex. Our
strategy is to first replace $\Delta$ with its barycentric subdivision
$\mathrm{b}\left(\Delta\right)$, and the coloring $c$ by the facet
coloring $\mathrm{b}(c)$ that it induces (each facet of $\mathrm{b}\left(\Delta\right)$
is given the color of the unique facet of $\Delta$ that it refines).
We then perform a further subdivision to encode the induced facet
coloring.

The barycentric subdivision is useful because in $\mathrm{b}\left(\Delta\right)$
every vertex has high degree. Our further subdivision will introduce
only vertices of lower degree, so that we will be able to identify
the vertices of $b\left(\Delta\right)$ in the final subdivided complex
$\Delta^{\prime}$. This will ensure that $\mathrm{b}\left(\Delta\right)$
and its coloring $\mathrm{b}(c)$ can both be reconstructed from the
resulting complex. Except for certain degenerate cases, $\Delta$
can be reconstructed from $\mathrm{b}\left(\Delta\right)$ by \cite{finney_barycentric_insufficiency}.
We enrich the facet coloring $c$ with more information in order to
deal with these degenerate cases.
\end{summary*}
\begin{lem}
\label[lemma]{lem:barycentric_d_plus_2}Let $\Delta$ be a triangulated
homology $d$-manifold without boundary ($d\ge2$), and denote its
barycentric subdivision by $\mathrm{b}\left(\Delta\right)$. Then
each vertex of $\mathrm{b}\left(\Delta\right)$ is incident to at
least $d+2$ edges.
\end{lem}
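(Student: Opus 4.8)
The plan is to analyze the local structure of $\mathrm{b}(\Delta)$ at each vertex. Vertices of $\mathrm{b}(\Delta)$ correspond to faces $\tau$ of $\Delta$, of dimensions $0 \le \dim\tau \le d$. For a given vertex $v_\tau$ of $\mathrm{b}(\Delta)$ with $\dim\tau = i$, I would count its neighbors: these are exactly the vertices $v_\rho$ of $\mathrm{b}(\Delta)$ where either $\rho \subsetneq \tau$ or $\tau \subsetneq \rho$ (a face and a coface forming a flag with $\tau$). The number of proper faces of $\tau$ is $2^{i+1} - 2$, so if $i \ge 2$ we already get at least $2^{i+1}-2 \ge d+2$ as soon as $i$ is large; the delicate cases are the small-dimensional faces, especially $i = 0$ and $i = 1$, where one must use cofaces and invoke the homology-manifold hypothesis.

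The key step is therefore to bound the number of cofaces of a low-dimensional face. For $i = 0$, i.e. $\tau = \{w\}$ a vertex of $\Delta$, the neighbors of $v_\tau$ in $\mathrm{b}(\Delta)$ are the barycenters of the faces $\rho \ni w$ of positive dimension; these are in bijection with the nonempty faces of the vertex link $\mathrm{lk}_\Delta(w)$, which is a $(d-1)$-dimensional homology sphere. A $(d-1)$-sphere triangulation has at least $d+1$ vertices and, being a closed manifold of positive dimension, at least one edge, so $\mathrm{lk}_\Delta(w)$ contributes at least $d+2$ nonempty faces — giving $v_\tau$ degree $\ge d+2$. For $i = 1$, $\tau = \{w_1, w_2\}$ an edge, $v_\tau$ has the $2$ proper faces $v_{\{w_1\}}, v_{\{w_2\}}$ as neighbors, plus one neighbor for each positive-dimensional coface of $\tau$; these cofaces correspond to nonempty faces of $\mathrm{lk}_\Delta(\tau)$, a $(d-2)$-dimensional homology sphere, which has at least $d-1$ vertices (and more faces if $d \ge 3$). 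Adding up, $v_\tau$ has degree at least $2 + (d-1) = d+1$ — so here I need to squeeze out one more neighbor. The extra neighbor comes from an edge of $\mathrm{lk}_\Delta(\tau)$: since $d \ge 2$, if $d = 2$ the link is $S^0$ but then $d+2 = 4$ and one checks directly ($v_\tau$ sees two vertices and two triangles of $\Delta$), and if $d \ge 3$ the link is a positive-dimensional closed manifold hence contains an edge, yielding a coface of dimension $3$ and the needed $(d+2)$-th neighbor.

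So the structure of the argument is: (1) reduce to counting faces-and-cofaces of a flag through $\tau$; (2) for $\dim\tau = i \ge 2$, the $2^{i+1}-2$ proper subfaces already suffice since $2^{i+1}-2 \ge d+2$ whenever $i \ge 2$ and $i \le d$ — wait, this needs care when $i$ is only slightly larger than $\log_2 d$, so more honestly I would for general $i$ combine proper faces ($2^{i+1}-2$ of them) with cofaces coming from $\mathrm{lk}_\Delta(\tau)$, an $(d-i-1)$-homology sphere with at least $d-i+1$ vertices, getting $\ge (2^{i+1}-2) + (d-i+1)$, and check this is $\ge d+2$ for all $0 \le i \le d$ (the minimum over $i$ is attained at the endpoints and is handled by the sphere-has-an-edge observation above); (3) handle the genuinely tight endpoint cases $i=0$ and $i=1$ using that a positive-dimensional closed triangulated manifold has at least one edge. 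The main obstacle is the $i=1$ case with $d$ small, where the crude count gives only $d+1$ and one must extract the final edge from the link — but the link is a closed $(d-2)$-homology sphere, and for $d \ge 3$ such a complex always contains an edge (it is a connected complex on $\ge d-1 \ge 2$ vertices), while $d = 2$ is a direct finite check.
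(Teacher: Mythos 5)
Your proposal is correct and follows the same approach as the paper: identify the neighbors of the barycenter of a $k$-face $\sigma$ as the barycenters of the proper nonempty faces of $\sigma$ ($2^{k+1}-2$ of them) and of the proper cofaces (in bijection with nonempty faces of $\mathrm{lk}(\sigma,\Delta)$, a homology $(d-k-1)$-sphere), then bound the face count of that link from below. The only substantive difference is the lower bound you use for the link: you take ``at least $\dim+2$ vertices plus at least one edge'' whereas the paper takes ``at least $\dim+2$ vertices plus at least $\dim+2$ facets,'' which lets them fold the cases $k\le d-2$ into one clean algebraic inequality $2(d+2^k-k)\ge d+2$ rather than treating $i=0$ and $i=1$ as separate endpoint cases. (Small slip that doesn't affect the outcome: a $(d-2)$-dimensional homology sphere has at least $d$ vertices, not $d-1$; with the corrected count the $i=1$ case gives $\ge d+3$ rather than exactly $d+2$.)
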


\begin{proof}
A vertex $v$ of $\mathrm{b}\left(\Delta\right)$ is the barycenter
of a nonempty face $\sigma$ of $\Delta$. Denote $k=\dim\sigma$.
Each vertex $w$ neighboring $v$ in the 1-skeleton of $\mathrm{b}\left(\Delta\right)$
is either the barycenter of a face $\tau$ satisfying $\emptyset\subsetneq\tau\subsetneq\sigma$
(a total of $2^{k+1}-2$ faces) or the barycenter of a face $\tau$
satisfying $\tau\supsetneq\sigma$. The number of faces that properly
contain $\sigma$ is the number of nonempty faces in $\mathrm{lk}\left(\sigma,\Delta\right)$,
which is a homology sphere of dimension $d-k-1$. 

We split into cases depending on the value of $s=\dim\mathrm{lk}\left(\sigma,\Delta\right)=d-k-1$.
Note that $s\ge-1$.
\begin{casenv}
\item If $s=-1$ then $k=d$ and $\sigma$ contains $2^{k+1}-2=2^{d+1}-2\ge d+2$
proper nonempty faces. Thus $v$ neighbors at least $d+2$ vertices.
\item If $s=0$ then $k=d-1$, and $\sigma$ contains $2^{k+1}-2=2^{d}-2$
proper nonempty faces, and is contained in precisely $2$ faces of
$\Delta$ (since $\Delta$ is a homology manifold, and the link of
$\sigma$ is a $0$-sphere). Again $v$ neighbors at least
\[
2^{d}\ge d+2
\]
vertices in the 1-skeleton of $\mathrm{b}\left(\Delta\right)$. 
\item Suppose $s\ge1$. Any homology sphere $\Sigma$ of dimension $s\ge1$
has at least $s+2$ vertices and $s+2$ facets: it has at least one
facet $\rho$ of dimension $s$, which has $s+1$ vertices, and at
least one vertex not on this face. Each of the maximal proper faces
of $\rho$ is a ridge (or codimension-1 face) of $\Sigma$, and hence
contained in a facet of $\Sigma$ other than $\rho$. Hence there
are at least $s+1$ facets of $\Sigma$ other than $\rho$ itself.
It therefore suffices to check that
\[
\left(2^{k+1}-2\right)+2\left(d-k-1+2\right)\ge d+2.
\]
Observe that
\[
\left(2^{k+1}-2\right)+2\left(d-k+1\right)=2\left(d+2^{k}-k\right)
\]
and that $2^{k}-k\ge1$, which proves the claim.
\end{casenv}
\end{proof}
The family of triangulated disks in the next definition will form
the ``color palette'' for our constructions.
\begin{defn}
Let $d\in\mathbb{N}$. We define $\mathcal{S}_{d}$ to be the family
of simplicial complexes that can be obtained from the $d$-simplex
by iterated stellar subdivision at facets. In other words, a complex
$\Delta$ is in $\mathcal{S}_{d}$ if there exists a finite sequence
of complexes $\Delta_{0},\Delta_{1},\ldots,\Delta_{n}$ such that
$\Delta_{0}$ is the $d$-simplex, $\Delta_{n}=\Delta$, and $\Delta_{i+1}$
is obtained from $\Delta_{i}$ by stellating a facet.
\end{defn}

\begin{rem*}
Note that any $\Delta\in\mathcal{S}_{d}$ is a triangulated PL disk,
and its boundary is isomorphic to the boundary of the simplex. Note
also that $\Delta_{1}$ in any sequence as above is uniquely defined:
it is the cone over the boundary of the $d$-simplex.
\end{rem*}
\begin{lem}
\label[lemma]{lem:coloring_d_plus_2}Let $\left(\Delta,c\right)$
be an $r$-colored triangulated homology $d$-manifold ($d\ge2)$
in which all vertices have degree at least $d+2$. Choose any $r$
non-isomorphic complexes $\Sigma_{1},\ldots,\Sigma_{r}\in\mathcal{S}_{d}$.
Construct a new complex $\Delta^{\prime}$ as follows: delete the
interior of each facet $\sigma$ of $\Delta$, and glue $\Sigma_{c(\sigma)}$
to the deleted complex via an (arbitrary) simplicial isomorphism $\partial\Sigma_{c(\sigma)}\rightarrow\partial\sigma$.
Then the isomorphism class $\left(\Delta,c\right)$ can be reconstructed
from $\Delta^{\prime}$. 
\end{lem}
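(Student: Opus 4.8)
The plan is to recover $\left(\Delta, c\right)$ from $\Delta^{\prime}$ in two stages: first identify the underlying complex $\mathrm{b}\left(\Delta\right)$ together with its induced coloring $\mathrm{b}\left(c\right)$ (assuming $c$ is actually the induced coloring $\mathrm{b}(c')$ coming from some coloring $c'$ of a complex one barycentric subdivision down — though as stated here we should just reconstruct $\left(\Delta,c\right)$ directly), and second read off the colors from the combinatorics of the inserted disks. First I would observe that by the Remark following the definition of $\mathcal{S}_d$, every $\Sigma_i \in \mathcal{S}_d$ has the same boundary as the $d$-simplex; hence $\Delta^{\prime}$ is a subdivision of $\Delta$ in which each facet $\sigma$ has been replaced by $\Sigma_{c(\sigma)}$ and the original $(d-1)$-skeleton of $\Delta$ is untouched. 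So $\Delta$ itself is visible in $\Delta^{\prime}$ as soon as we can identify which vertices of $\Delta^{\prime}$ are ``original'' (vertices of $\Delta$) and which are ``new'' (interior vertices of some $\Sigma_i$).

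The key point is a degree comparison, exactly as flagged in the summary. Every vertex of $\Delta$ has degree $\ge d+2$ in $\Delta$, and passing to $\Delta^{\prime}$ only increases vertex degrees, so every original vertex has degree $\ge d+2$ in $\Delta^{\prime}$. On the other hand, each new vertex is an interior vertex of some $\Sigma_i$, introduced by a stellar subdivision at a facet of a complex in $\mathcal{S}_d$; such a vertex is the cone point over the boundary of a $d$-simplex inside $\Sigma_i$, hence has degree exactly $d+1$ in $\Sigma_i$ — and, since $\Sigma_i$ is glued to the rest of $\Delta^{\prime}$ only along $\partial \Sigma_i$ (which contains no interior vertices), it has degree exactly $d+1$ in $\Delta^{\prime}$ as well. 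Here I am using the precise structure of $\mathcal{S}_d$: the very first subdivision produces a single interior vertex of degree $d+1$, and every subsequent stellar subdivision at a facet $\tau$ adds a new vertex adjacent precisely to the $d+1$ vertices of $\tau$. Thus the original vertices of $\Delta$ are exactly the vertices of $\Delta^{\prime}$ of degree $\ge d+2$, and deleting all other vertices (and the open stars of the inserted disks) recovers $\Delta$, or more precisely recovers the $(d-1)$-skeleton $\Delta^{\le d-1}$ together with, in each region bounded by the boundary of a former facet, a copy of some $\Sigma_i$; filling in those boundary spheres with $d$-simplices recovers $\Delta$ on the nose.

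It remains to recover the coloring $c$. Having located, inside $\Delta^{\prime}$, the subcomplex $\Sigma_{\sigma} \subset \Delta^{\prime}$ replacing each facet $\sigma$ (it is the component cut out by the boundary $(d-1)$-sphere $\partial\sigma$ consisting of original vertices and edges), we examine its isomorphism type as an abstract simplicial complex. Since $\Sigma_1, \ldots, \Sigma_r$ were chosen pairwise non-isomorphic, there is a unique $i$ with $\Sigma_{\sigma} \cong \Sigma_i$, and we set $c(\sigma) = i$. This is well-defined and recovers $c$ exactly, so $\left(\Delta, c\right)$ is reconstructed up to isomorphism. The main obstacle — and really the only subtle point — is the degree bookkeeping: one must be careful that the hypothesis ``all vertices of $\Delta$ have degree $\ge d+2$'' is genuinely needed (it is supplied in the intended application by \Cref{lem:barycentric_d_plus_2}, since $\Delta$ there will be a barycentric subdivision), and that no interior vertex of any $\Sigma_i$ can accidentally reach degree $d+2$; the latter follows because interior vertices never lie on $\partial\Sigma_i$, so their links are entirely internal to $\Sigma_i$, and within $\mathcal{S}_d$ an easy induction shows every interior vertex has degree exactly $d+1$. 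Everything else is formal.
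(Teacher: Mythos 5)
Your degree criterion for distinguishing ``original'' from ``new'' vertices is incorrect, and this is precisely the subtlety the paper's proof exists to handle. You assert that within any $\Sigma_i\in\mathcal{S}_d$ every interior vertex has degree exactly $d+1$, by ``an easy induction.'' This fails already at two stellations. Start from the $d$-simplex, stellate its unique facet to obtain $c*\partial\Delta^d$: the cone point $c$ has degree $d+1$. Now stellate any facet of $c*\partial\Delta^d$; every facet contains $c$, so $c$ gains a new neighbor and now has degree $d+2$. Since interior vertices keep the same degree after gluing into $\Delta'$ (as you correctly note), this interior vertex has degree $d+2$ in $\Delta'$, and your rule would mislabel it as an original vertex of $\Delta$. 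More generally, later facet stellations keep inflating the degrees of earlier interior vertices, so there is no fixed degree bound separating interior vertices of $\Sigma_i$ from vertices of $\Delta$.

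The paper avoids this pitfall by not attempting a static criterion at all. Instead it runs a \emph{greedy} process of reverse facet stellations (removing any vertex of degree exactly $d+1$) and proves that this process never gets stuck before reaching $\Delta$. The key point is dynamic: among the remaining non-original (``young'') vertices, consider the youngest; all of its neighbors are older, so its degree is at most $d+1$, and since the complex remains a homology $d$-manifold its degree is at least $d+1$, hence exactly $d+1$, and it can be removed. The hypothesis that original vertices have degree $\ge d+2$ is then used only to ensure the greedy process cannot accidentally delete a vertex of $\Delta$. This is the substantive content of the lemma's proof, and your proposal, as written, skips it. The final step of reading off colors from the combinatorial isomorphism type of the inserted disks is fine once the reconstruction of $\Delta$ is established.
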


In particular, this construction is injective on the set of all isomorphism
classes of $r$-colored triangulated homology $d$-manifolds ($d\ge2$)
with all vertex degrees at least $d+2$.
\begin{claim}
If $\Delta_{0}=\Delta,\Delta_{1},\ldots,\Delta_{k}$ is a sequence
in which each $\Delta_{i+1}$ is constructed from $\Delta_{i}$ by
stellar subdivision at a facet, and in $\Delta$ all vertices have
degree at least $d+2$, then the process of greedily performing reverse
stellations at facets while possible always terminates with $\Delta$.
\end{claim}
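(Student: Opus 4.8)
The plan is to show two things: first, that the greedy reverse-stellation process terminates (this is immediate, since each reverse stellar subdivision at a facet removes one vertex, so the number of vertices strictly decreases and the process must stop); and second, that it can only stop at $\Delta$ itself. The key invariant to track is the \emph{degree} of vertices: in $\Delta$ all vertices have degree at least $d+2$, but a stellar subdivision at a $d$-facet introduces a new vertex of degree exactly $d+1$ (it is joined precisely to the $d+1$ vertices of the facet being subdivided). So I would first verify that performing a stellar subdivision at a facet never \emph{decreases} the degree of any pre-existing vertex, and increases the degree of exactly those $d+1$ vertices on the subdivided facet by $1$ each. Consequently, in any complex $\Delta_i$ appearing in the sequence $\Delta = \Delta_0, \Delta_1, \ldots, \Delta_k$, the vertices of $\Delta$ retain degree $\ge d+2$, and every vertex introduced along the way either still has degree $d+1$ (if it has not itself been "built over") or has degree $\ge d+1$; in fact I want the sharper statement that a vertex of degree exactly $d+1$ in $\Delta_i$ whose link is the boundary of a $d$-simplex is precisely a "last-added" vertex that can be reverse-stellated.

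Next I would characterize when a reverse stellar subdivision at a vertex $v$ is legal in a complex $\Sigma \in \mathcal{S}_d$: it is legal exactly when $\mathrm{lk}(v,\Sigma)$ is the boundary complex of a $d$-simplex, i.e.\ $v$ has degree $d+1$ and its neighbors span a face pattern isomorphic to $\partial \Delta^d$; in that case the reverse operation deletes $v$ and fills in the $d$-simplex on those $d+1$ vertices. The main point is then: \emph{as long as the current complex $\Sigma$ still has a vertex not belonging to $\Delta$, there is a legal reverse stellation available.} To see this, take the sequence $\Delta = \Delta_0, \ldots, \Delta_k = \Sigma$ witnessing $\Sigma \in \mathcal{S}_d$ built from $\Delta$, and consider the \emph{last} vertex $w$ added in this sequence (the vertex introduced in the step $\Delta_{k-1} \to \Delta_k$, if $k \ge 1$). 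That vertex has degree exactly $d+1$ in $\Sigma$ with link $\partial\Delta^d$, because no later subdivision touched a facet containing $w$ — so the greedy process can always make progress when a non-$\Delta$ vertex remains. Wait: the greedy process is allowed to pick \emph{any} legal reverse stellation, not necessarily the one undoing the last step, so I need a confluence-type argument, which is the real content.

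The hard part will be confluence: showing that no matter which legal reverse stellations the greedy process chooses, it cannot get "stuck" at a complex strictly between $\Delta$ and the maximal subdivisions — equivalently, that the only complex reachable from $\Sigma$ with no legal reverse stellation is $\Delta$ itself. I would argue as follows. Suppose the process halts at some $\Sigma'$ with a vertex $v \notin V(\Delta)$. Among all vertices of $\Sigma'$ not in $V(\Delta)$, pick one, say $v$; I claim some such vertex has link $\partial\Delta^d$ and hence admits a legal reverse stellation, contradicting termination. The cleanest route is a local/inductive argument on a witnessing sequence for $\Sigma' \in \mathcal{S}_d$: the set of added vertices forms, in a suitable sense, a forest of "stellation events," and a vertex whose link is \emph{not} $\partial\Delta^d$ must have had some other vertex stellated into one of its incident facets at a later stage; following this forward, a maximal such chain ends at a vertex that \emph{was} added last among those in its local region, and that vertex necessarily has link $\partial\Delta^d$. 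One then checks that reverse-stellating that vertex keeps us inside $\mathcal{S}_d$ (this is where the degree bound $\ge d+2$ on $V(\Delta)$ is used: it guarantees that a degree-$(d+1)$ vertex with link $\partial\Delta^d$ cannot be a vertex of $\Delta$, so reverse-stellating it strictly decreases $|V(\Sigma') \setminus V(\Delta)|$ and the induction on $|V(\Sigma')|$ closes). Iterating, we eliminate all added vertices and reach $\Delta$, which has no legal reverse stellation at all since each of its vertices has degree $\ge d+2 > d+1$. I expect the bookkeeping of the "stellation forest" and the verification that an individual reverse step stays in $\mathcal{S}_d$ to be the only genuinely delicate points; everything else is a degree count.
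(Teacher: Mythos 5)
Your overall strategy — track vertex degrees, observe that a legal reverse stellation happens precisely at a vertex with link $\partial\Delta^d$, and locate a ``last-added'' vertex to show the process can always continue — is essentially the same age-ordering argument the paper uses. You correctly flag the confluence issue (the greedy process can pick any legal reverse stellation) as the real content of the claim, and your degree count showing that vertices of $\Delta$ can never be deleted is right.

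However, there is a genuine gap in the way you propose to resolve confluence. You want to take ``a witnessing sequence for $\Sigma' \in \mathcal{S}_d$'' and find the last-added vertex in it, but you have not established that the intermediate complex $\Sigma'$ (obtained from $\Delta_k$ by some arbitrary sequence of reverse stellations) is itself in $\mathcal{S}_d$, i.e.\ obtainable from $\Delta$ by facet stellations. Your parenthetical justification — that the degree bound $\ge d+2$ guarantees the deleted vertex is not in $V(\Delta)$, hence $|V(\Sigma')\setminus V(\Delta)|$ decreases — only shows that $\Delta$-vertices survive; it does not show that the resulting complex admits a witnessing stellation sequence from $\Delta$. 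What you actually need here is a reordering lemma: if $v\notin V(\Delta)$ has degree $d+1$ and link $\partial\Delta^d$ in $\Sigma\in\mathcal{S}_d$, then in any witnessing sequence for $\Sigma$, no stellation after $v$'s birth can have touched $v$'s star (else $v$'s degree would exceed $d+1$), so those later stellations can all be performed without having created $v$, and the reverse-stellated complex is again in $\mathcal{S}_d$. That lemma is true and not hard, but it is the thing you are eliding with ``one then checks.''

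The paper sidesteps this entirely: it realizes $\Delta$ geometrically, so every intermediate complex is a geometric subdivision of $\Delta$; it fixes the age ordering from the original sequence $\Delta_0,\dots,\Delta_k$ \emph{once and for all} (rather than re-deriving a witnessing sequence for each $\overline\Delta$); and it observes that the youngest vertex $v$ remaining in $\overline\Delta$ has degree $\le d+1$ simply because every edge of $\overline\Delta$ is an edge of $\Delta_k$ and every neighbor $v$ acquired after birth is strictly younger and therefore already gone, while degree $\ge d+1$ holds since $\overline\Delta$ is still a homology manifold. This avoids any need to re-establish a witnessing sequence for intermediate complexes. If you prefer to stay with your $\mathcal{S}_d$-invariance formulation, you should state and prove the reordering lemma; alternatively, switch to the paper's bookkeeping, which makes the confluence step considerably shorter.
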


\begin{proof}
Note that reverse stellar subdivision at a facet is precisely the
removal of the open star of a vertex of degree $d+1$ (whose link
must be a simplicial sphere with $d+1$ vertices, hence is the boundary
of a simplex) and the replacement of this open star by the interior
of a single $d$-simplex. On the $1$-skeleton this process has precisely
the effect of deleting a vertex and its edges. Hence we can reverse-stellate
and delete a given vertex if and only if its degree in the $1$-skeleton
is $d+1$. Note that this means that a vertex of $\Delta$ can never
be deleted in the greedy process: if $v\in\Delta$ is the first vertex
deleted this way, all its neighbors in $\Delta$ must have been present
before the deletion, at which point its degree in the $1$-skeleton
was at least $d+2$ by assumption.

It is convenient to realize $\Delta$ in $\mathbb{R}^{k}$ (for some
large $k$, e.g. $2d+1$) in order to use the following geometric
notion of subdivision: a subdivision of $\Delta$ is a simplicial
complex in $\mathbb{R}^{k}$ with the same support as $\Delta$, and
such that every face is contained in a unique face of $\Delta$. 

Call a vertex of a subdivision of $\Delta$ \emph{young} if it is
not a vertex of $\Delta$. The neighbors of a young vertex are all
contained in one facet of $\Delta$, and any reverse stellation which
deletes a given (necessarily young) vertex $v$ preserves this property
(since it preserves the neighbors' positions). It follows that a reverse
stellation at a facet of a subdivision of $\Delta$ always results
in a subdivision of $\Delta$. The young vertices of $\Delta_{k}$
are ordered by age (each vertex is introduced at a different $\Delta_{i}$,
with the youngest being the last added,) and if $\overline{\Delta}$
is the result of any sequence of reverse facet stellations from $\Delta_{k}$,
a reverse facet stellation deleting a vertex $v$ of $\overline{\Delta}$
may be performed whenever $v$ is younger than all its neighbors in
$\overline{\Delta}^{\le1}$: if $v$ has no younger neighbor then
its degree is at most $d+1$, but the degree of a vertex in a $d$-dimensional
homology manifold is always at least $d+1$, so $\deg_{\overline{\Delta}}(v)=d+1$.
Hence the process can never get stuck except at $\Delta$ (in which
we know no reverse stellation is possible): if $\overline{\Delta}\neq\Delta$,
we may always reverse-stellate further by deleting the youngest vertex
left in $\overline{\Delta}$. 
\end{proof}
\begin{proof}
[Proof of the lemma] The claim above shows that iteratively performing
all possible reverse stellations at facets recovers $\Delta$ from
$\Delta^{\prime}$. We can embed $\Delta$ in some $\mathbb{R}^{k}$
and re-perform the stellations to obtain $\Delta^{\prime}$ as a geometric
subdivision of $\Delta$; the combinatorial isomorphism type of the
simplicial disk refining a given facet $\sigma$ of $\Delta$ contains
precisely the information of the color $c(\sigma)$.
\end{proof}
\begin{proof}
[Proof of \Cref{thm:color_encoding}] The main theorem of \cite{finney_barycentric_insufficiency}
states that if $K,L$ are connected, locally finite simplicial complexes
such that $b(K)$ and $b(L)$ are isomorphic then $K$ is isomorphic
to $L$. Further, if $|K|$ is not a $1$-manifold and $K$ is neither
a simplex nor the boundary of a simplex then the isomorphism between $b(K)$ and $b(L)$ induces
an isomorphism of $K$ and $L$.

In other words, given $b(K)$ where $\dim K\ge2$ we can determine
the isomorphism type of $K$. Unless $K$ is the simplex or the boundary
of a simplex, we can further determine those subcomplexes of $b(K)$
that refine faces of $K$. The ``missing information'' in the exceptional
cases is the information assigning to each vertex $v$ of $b(K)$
the dimension of the minimal face $\sigma$ of $K$ that contains
it (equivalently, the dimension of the face $\sigma$ of which $v$
is the barycenter).

Fix $d,r\in\mathbb{N}$ and choose $r\cdot(d+1)$ nonisomorphic complexes
$\Sigma_{(1,0)},\ldots,\Sigma_{(r,d)}$ from $\mathcal{S}_{d}$. Let
a triangulated homology $d$-manifold $\Delta$ be given together
with a facet coloring $c:\Delta^{(d)}\rightarrow[r]$. We use the
following observation: for any face $\sigma\in\Delta$ there exists
a vertex $v_{\sigma}\in b(\Delta)$ at the barycenter of $\sigma$,
and the stars of the vertices $\left\{ v_{\sigma}\right\} _{\sigma\in\Delta}$
within $b(b(\Delta))$ partition the facets of $b(b(\Delta))$. Apply
\Cref{lem:coloring_d_plus_2} to $b(b(\Delta))$ (in which all vertex
degrees are at least $d+2$ by \Cref{lem:barycentric_d_plus_2}) as
follows: given a facet $\tau$ in the star of $v_{\sigma}$ within
$b(b(\Delta))$, $\tau$ is contained in a unique facet $\tau'$ of
$\Delta$. Color $\tau$ by $(c(\tau'),\dim\sigma)$, or in other
words, replace $\tau$ by $\Sigma_{(c(\tau'),\dim\sigma)}$. Perform
this process simultaneously for all facets of $b(b(\Delta))$, and
call the result $\mathrm{Enc}_{d,r}(\Delta,c)$. Note that the ``color
palette'' $\Sigma_{(1,0)},\ldots,\Sigma_{(r,d)}$ is part of $\mathrm{Enc}_{d,r}(\Delta,c)$,
and is also needed in order to compute the left inverse.

Statements (1,2) of the theorem follow automatically from the construction.
For statement (3), note that \Cref{lem:coloring_d_plus_2} guarantees
we can reconstruct the isomorphism type of $b(b(\Delta))$ together
with its facet coloring from the isomorphism type of $\mathrm{Enc}_{d,r}(\Delta,c)$.
We can then reconstruct $b(\Delta)$ together with, for each vertex,
the dimension of the face of $\Delta$ of which it is a barycenter;
this allows us to reconstruct the isomorphism type of $\Delta$. Further,
the given facet coloring of $b(b(\Delta))$ by colors in $(1,0),\ldots,(r,d)$
induces the coloring $c$ on the facets of $\Delta$ by taking just
the first coordinate, because all facets of $b(b(\Delta))$ refining
a given facet of $\Delta$ have colors with the same first coordinate.
\end{proof}

\section{Encoding manifold triangulations in colorings of the dual graph}

Given a triangulated $d$-manifold $M$, the dual graph $D$ does
not always uniquely determine the simplicial isomorphism type of $M$:
examples of this phenomenon exist in triangulations of the torus and
the projective plane (\cite{Ceballos_Doolittle}). We produce colorings
of $D$ with constantly many colors (depending on $d$) from which
$M$ can be reconstructed.

We begin by describing the data our colorings are designed to encode.
\begin{defn}
Let $<$ be a total order on $M^{(0)}$, and denote the vertices by
$v_{1}<\ldots<v_{n}$. Each facet $\sigma$ of $M$ is of the form
$\left\{ v_{i_{1}},\ldots,v_{i_{d+1}}\right\} $ (where the indices
are chosen such that $i_{1}<i_{2}<\ldots<i_{d+1}$). If $\tau$ is
a facet of $M$ that intersects $\sigma$ at a $(d-1)$-face then
$\sigma\setminus\tau=\left\{ v_{i_{j}}\right\} $ is a singleton,
and it has a well-defined index $j$ within the order induced on the
vertices of $\sigma$. In this situation, denote
\[
\widetilde{\mathrm{missing}}_{<}(\sigma,\tau)=j.
\]
Thus $\widetilde{\mathrm{missing}}_{<}$ is a function from ordered
pairs of adjacent facets into $\{1,\ldots,d+1\}$. By identifying
ordered pairs of adjacent facets in $M$ with the corresponding oriented
edges of $D$, we obtain a function
\[
\mathrm{missing}_{<}:\left\{ (v,w)\in D^{(0)} \times D^{(0)} \mid\left\{ v,w\right\} \text{ is an edge of \ensuremath{D}}\right\} \rightarrow\left\{ 1,\ldots,d+1\right\} .
\]
This is the \emph{missing vertex function} associated to $<$.
\end{defn}

\begin{claim}
Let $M$ be a triangulated homology $d$-manifold and let $<$ be
any linear order on $M^{(0)}$. The simplicial isomorphism type of
$M$ can be reconstructed from its dual graph $D$ together with the
function $\mathrm{missing}_{<}$.
\end{claim}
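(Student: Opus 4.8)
The plan is to reconstruct $M$ facet-by-facet by "walking" through the dual graph $D$, using $\mathrm{missing}_{<}$ to keep track of which vertices of $M$ lie in each facet. First I would fix a spanning tree $T$ of $D$ (the graph is connected, since $M$ is connected) rooted at an arbitrary facet $\sigma_0$. I will assign to $\sigma_0$ the vertex set $\{1,2,\ldots,d+1\}$ — these are abstract labels, which at the end will be identified with the actual vertices $v_1 < \ldots < v_n$ of $M$ in the unique order-preserving way. The key observation is the following: if $\sigma$ and $\tau$ are adjacent facets of $M$ and we already know the vertex set of $\sigma$ as a subset of $\{1,\ldots,n\}$ written in increasing order, then $\mathrm{missing}_{<}(\sigma,\tau) = j$ tells us that $\tau$ is obtained from $\sigma$ by deleting its $j$-th smallest vertex and adding exactly one new vertex; symmetrically $\mathrm{missing}_{<}(\tau,\sigma) = j'$ tells us the position of that new vertex within $\tau$. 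Thus from the ordered vertex set of $\sigma$ together with the pair $(\mathrm{missing}_{<}(\sigma,\tau), \mathrm{missing}_{<}(\tau,\sigma))$ we learn: the set $\sigma \cap \tau$ (with its induced order), and the \emph{relative position} of the new vertex $x \in \tau \setminus \sigma$ among the elements of $\sigma \cap \tau$. This does not yet pin down $x$ as an element of $\{1,\ldots,n\}$ in absolute terms, but it does determine the order type of everything seen so far.

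Next I would run this propagation along $T$: process the edges of $T$ in an order extending the tree order from the root, and for each edge $\{\sigma,\tau\}$ with $\sigma$ already processed, use the rule above to determine $\tau$'s vertices as a list with a known linear order relative to all previously introduced vertices. Concretely, maintain a growing finite totally ordered set $P$ (the "abstract vertices discovered so far") together with, for each processed facet, a $(d+1)$-element subset of $P$; when a new facet $\tau$ is reached across an edge from $\sigma$, the pair of missing-vertex values tells us whether $\tau \setminus \sigma$ is a vertex already in $P$ — namely one of the elements of $\sigma$ adjacent, in the order of $P$, to the gap where $\mathrm{missing}_<(\tau,\sigma)$ places it — or a genuinely new element of $P$ to be inserted at the correct place. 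One must check this is well-defined: the position data is consistent because $\mathrm{missing}_<$ came from an actual order $<$ on $M^{(0)}$ in the first place. After all tree edges are processed, $P$ has been identified with $\{1,\ldots,n\}$ (it is finite and totally ordered), every facet of $M$ has a well-defined vertex set, and the simplicial complex generated by these facets — by \Cref{thm:color_encoding}-style routine checks it is exactly $M$ as an abstract simplicial complex — is reconstructed. Since isomorphism of simplicial complexes is determined by the facet sets, this recovers the simplicial isomorphism type of $M$.

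Two points need care. First, one must verify that the non-tree edges of $D$ are automatically consistent, i.e. impose no new information and no contradiction: this is clear because the reconstruction we produced is literally $M$ with its vertices relabeled order-isomorphically, so $\mathrm{missing}_<$ on the reconstruction agrees with the given function on every edge. Second — and this is where I expect the only real subtlety — one must confirm that the propagation rule genuinely determines $\tau \setminus \sigma$ and not merely its position: the issue is that $\mathrm{missing}_<(\tau,\sigma)$ gives the rank of $x$ among the $d+1$ vertices of $\tau$, hence its position among the $d$ vertices of $\sigma \cap \tau$, and from the already-known embedding of $\sigma \cap \tau$ into the ordered set $P$ this position either coincides with an existing element of $P$ sitting strictly between the relevant consecutive elements of $\sigma\cap\tau$, or forces a new element. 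Distinguishing "reuse an old vertex" from "introduce a new one" requires knowing the full ordered set $P$ at that stage, which is why processing in tree order (so that $P$ only grows) is essential; doing it out of order could create ambiguity. Handling this bookkeeping correctly — essentially an induction on the tree showing that at each stage the partial data is exactly the restriction of $M$'s true vertex labeling to the facets seen so far — is the main obstacle, but it is entirely routine once set up. The homology-manifold hypothesis is used only to guarantee $D$ is connected and $(d+1)$-regular with well-behaved ridge adjacencies, which is what makes "adjacent across a $(d-1)$-face" the right notion throughout.
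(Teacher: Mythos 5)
There is a genuine gap in the tree-walk reconstruction. When you pass from a processed facet $\sigma$ to a new facet $\tau$ along a tree edge, the pair $\left(\mathrm{missing}_{<}(\sigma,\tau),\mathrm{missing}_{<}(\tau,\sigma)\right)$ determines $\sigma\cap\tau$ and the \emph{ordinal position} of the new vertex $x=\tau\setminus\sigma$ among the vertices of $\tau$ — that is, the open interval of the global order in which $x$ lies — but it does \emph{not} determine the identity of $x$. In particular it cannot tell you whether $x$ coincides with a previously discovered vertex of $M$ lying in that interval, which one if so, or whether $x$ is genuinely new. For a concrete $d=2$ instance: if $\sigma=\{a<b<c\}$ and the tree edge to $\tau$ has $\mathrm{missing}_{<}(\sigma,\tau)=2$ and $\mathrm{missing}_{<}(\tau,\sigma)=2$, all you learn is $\tau=\{a,x,c\}$ with $a<x<c$ and $x\neq b$; any vertex of $M$ in that range (other than $b$) is consistent with the local data. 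Your remark that the non-tree edges of $D$ ``impose no new information'' is exactly where this breaks: the identifications between copies of a vertex $v$ across facets of $\mathrm{st}_v(M)$ are obtained by tracing paths in the dual graph of $\mathrm{st}_v(M)$, which is connected, but whose edges need not lie in any chosen spanning tree of the full dual graph $D$. So the tree walk can fail to make identifications that are necessary, producing a complex with too many vertices, and there is no well-defined way to resolve the ambiguity without the non-tree edges.

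The paper's proof takes a different route that sidesteps this. It forms the disjoint union $X$ of abstract $d$-simplices (one per facet), defines an equivalence relation $\sim$ on their vertices using \emph{all} edges of $D$ (matching up the $d$ retained positions across each dual edge), and shows $X/\sim$ is simplicially isomorphic to $M$. The key step — that each $\sim$-class is exactly $\varphi^{-1}(v)$ for some vertex $v\in M^{(0)}$ — uses connectedness of the dual graph of $\mathrm{st}_v(M)$. If you abandon the spanning tree and instead define the equivalence relation over all dual edges and then prove the identifications are transitive-closure-correct via that connectedness fact, you recover the paper's argument; as written, the proposal does not close the loop.
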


\begin{rem}
More generally, the proof works whenever all maximal faces in $M$
have dimension $d$, and the dual graph of the star of each vertex
of $M$ is connected.
\end{rem}

\begin{proof}
Let $X$ be the simplicial complex given by a disjoint union of $d$-simplices
in bijection with $D^{(0)}$ (together with all their faces). Denote
the vertices of a $d$-simplex $\sigma$ corresponding to $v\in D^{(0)}$
by $u_{v,1},\ldots,u_{v,d+1}$. Define an equivalence relation $\sim$
on the vertices of $X$: whenever $\{v,w\}$ is an edge of $D$ with
$i_{v}=\mathrm{missing}_{<}(v,w)$ and $i_{w}=\mathrm{missing}_{<}(w,v)$,
define corresponding vertices in the tuples $\left(u_{v,1},\ldots,\hat{u}_{v,i_{v}},\ldots,u_{v,d+1}\right)$
and $\left(u_{w,1},\ldots,\hat{u}_{w,i_{w}},\ldots,u_{w,d+1}\right)$
to be equivalent, where $\hat{u}_{v,i_{v}}$ denotes that the element
is excluded from the tuple. For a face $\tau\in X$, denote $\left[\tau\right]_{\sim}=\left\{ [u]_{\sim}\mid u\in\tau\right\} $
(the set of equivalence classes of vertices of $\tau$), and finally
set 
\[
X/\sim\coloneqq\left\{ [\tau]_{\sim}\mid\tau\in X\right\} .
\]
There is a simplicial map $\varphi:X\rightarrow M$ which is bijective
on $d$-faces: if $\left\{ u_{w,1},\ldots,u_{w,d+1}\right\} $ is
a facet of $X$ and $\sigma\in M^{(d)}$ is the facet corresponding
to $w\in D^{(0)}$, the vertices of $\sigma$ are $v_{i_{1}}<\ldots<v_{i_{d+1}}$
(in the order induced by $<$). Define $\varphi\left(u_{w,j}\right)=v_{i_{j}}$
for each $1\le j\le d+1$. Since each face of $M$ is contained in
a $d$-face, $\varphi$ is surjective on $i$-faces for all $i$.
It suffices to prove that $\varphi$ factors through the quotient
$X/\sim$ and that the induced simplicial map $X/\sim\rightarrow M$
is an isomorphism.

Observe that if $v\in M^{(0)}$ then $\varphi^{-1}(v)$ is a collection
of vertices, one contained in each facet of $X$ that corresponds
(via $\varphi$) to a facet of $\mathrm{st}_{v}(M)$. If $\sigma,\tau$
are two such facets of $X$ which are adjacent in the dual graph of
$\mathrm{st}_{v}(M)$, then $\sim$ identifies the unique vertex in
$\sigma\cap\varphi^{-1}(v)$ with the unique vertex in $\tau\cap\varphi^{-1}(v)$.
Since the dual graph of $\mathrm{st}_{v}(M)$ is connected, $\varphi^{-1}(v)$
is precisely an equivalence class under $\sim$, so $\varphi$ induces
a bijection $\left(X/\sim\right)^{(0)}\rightarrow M^{(0)}$. It also
follows that if $\rho\in X$ is any face then 
\[
\varphi\left(\left[\rho\right]_{\sim}\right)=\left\{ \varphi\left(\left[u\right]_{\sim}\right)\mid u\in\rho\right\} =\left\{ \varphi\left(u\right)\mid u\in\rho\right\} =\varphi\left(\rho\right),
\]
so $\varphi$ is well defined on the faces of $X/\sim$. We have $\varphi^{-1}\left(\varphi\left(\rho\right)\right)=\left\{ \left[u\right]_{\sim}\mid u\in\rho\right\} =\left[\rho\right]_{\sim}$,
and hence the induced map is injective on faces of $X/\sim$ (surjectivity
follows from the surjectivity of $\varphi$).
\end{proof}
\begin{prop}
\label[proposition]{prop:dual_graph_encoding}Let $D$ be the dual
graph of a triangulated homology $d$-manifold $M$. There is a vertex
coloring $c$ of $D$ with boundedly many colors (depending on $d$)
such that $\left(D,c\right)$ uniquely reconstructs the simplicial
isomorphism type of $M$.
\end{prop}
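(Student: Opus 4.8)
The plan is to reduce the statement to the preceding claim, which reconstructs $M$ from the dual graph $D$ together with the missing vertex function $\mathrm{missing}_{<}$ attached to any chosen linear order $<$ on $M^{(0)}$. It therefore suffices to record the orientation-dependent edge labels $\mathrm{missing}_{<}\colon\{\text{oriented edges of }D\}\to\{1,\dots,d+1\}$ inside a vertex coloring of $D$ using a number of colors bounded in terms of $d$. Since $D$ has maximum degree at most $d+1$, the natural idea is to name the half-edges at each vertex $v$ by a proper edge coloring of $D$ and let $c(v)$ be the function sending each such ``port color'' to the corresponding label $\mathrm{missing}_{<}(v,\cdot)$. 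The obstruction is that a proper edge coloring is extra structure not present in $(D,c)$, and it cannot in general be recovered from the colors of the vertices alone; overcoming this is the one genuine point of the proof.

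First I would fix a \emph{strong} edge coloring $\chi\colon E(D)\to[q]$, that is, a proper edge coloring in which any two distinct edges that share a vertex, or are joined by a third edge, receive different colors. Such a $\chi$ exists with $q=q(d)$ bounded in terms of $d$: a strong edge coloring is just a proper vertex coloring of the graph on $E(D)$ whose edges join pairs of edges of $D$ that share a vertex or have an edge meeting both, and this graph has maximum degree $O(d^2)$ because $D$ has maximum degree at most $d+1$, so a greedy argument suffices. The property I want is the identity
\[
C_v\cap C_w=\{\chi(\{v,w\})\}\qquad\text{for every edge }\{v,w\}\text{ of }D,
\]
where $C_u:=\{\chi(f):f\ni u\}\subseteq[q]$. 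Indeed $\chi(\{v,w\})\in C_v\cap C_w$, and conversely any edge $e'\ni v$ with $e'\neq\{v,w\}$ and any edge $f\ni w$ with $f\neq\{v,w\}$ are joined by $\{v,w\}$, so $\chi(e')\neq\chi(f)$, while $\chi(e'),\chi(f)\neq\chi(\{v,w\})$ by properness; hence no color other than $\chi(\{v,w\})$ lies in both $C_v$ and $C_w$. In particular $\chi$ can be read off locally once the family $(C_u)_u$ is known.

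Next I would choose any linear order $<$ on $M^{(0)}$ and define, for each vertex $v$ of $D$,
\[
c(v)=\bigl(g_v\colon[q]\to\{0,1,\dots,d+1\}\bigr),\qquad
g_v(i)=\begin{cases}\mathrm{missing}_{<}(v,w)&\text{if }\{v,w\}\text{ is the edge at }v\text{ with }\chi(\{v,w\})=i,\\ 0&\text{if no edge at }v\text{ has color }i.\end{cases}
\]
This uses at most $(d+2)^{q}$ colors, a bound depending only on $d$. Given $(D,c)$ one then recovers, in order: the sets $C_v=g_v^{-1}(\{1,\dots,d+1\})$ for all $v$; the value $\chi(\{v,w\})$ as the unique element of $C_v\cap C_w$ for each edge $\{v,w\}$; the value $\mathrm{missing}_{<}(v,w)=g_v(\chi(\{v,w\}))$ for each oriented edge $(v,w)$; and finally, by the preceding claim applied to $D$ together with $\mathrm{missing}_{<}$, the simplicial isomorphism type of $M$.

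The only step with real content is the passage from a proper to a strong edge coloring together with the verification of the identity $C_v\cap C_w=\{\chi(\{v,w\})\}$, which is exactly what makes the port-naming reconstructible from the vertex colors; everything else is bookkeeping. I would also note at the outset that the proposition asks only for the existence of a suitable $c$, so the arbitrary choices of $<$ and of $\chi$ are harmless — and in fact reconstructibility automatically makes $M\mapsto(D,c)$ injective on isomorphism types, which is the form convenient for the later applications.
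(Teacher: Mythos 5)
Your argument is correct, and it takes a genuinely different technical route from the paper's. The underlying problem is the same in both cases: one needs a local ``naming'' scheme that lets an observer who sees only the colored graph $(D,c)$ match each neighbor $w$ of a vertex $v$ with the entry of $c(v)$ recording $\mathrm{missing}_{<}(v,w)$. The paper names neighbors by \emph{vertex} colors: it fixes a proper coloring $c_0$ of $D$ in which vertices at distance $\le 2$ get distinct colors (Brooks' theorem gives $(d+1)^2+1$ colors), so the neighbors of any $v$ carry pairwise distinct $c_0$-labels, and then stores at $v$ the list of pairs $(c_0(w),\mathrm{missing}_{<}(v,w))$. You instead name the incident \emph{edges} by a strong edge coloring $\chi$ and recover the port color of an edge $\{v,w\}$ as the unique element of $C_v\cap C_w$; the verification that this intersection is a singleton is the one extra lemma your route requires, and your proof of it is correct (the only slightly compressed step is ruling out $e'=f$, which forces $e'=\{v,w\}$ and is thus excluded). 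Both approaches give a color count bounded purely in terms of $d$; the paper's bound $O(d^2)\cdot(O(d^3))^{d+1}$ is somewhat smaller than your $(d+2)^{O(d^2)}$, but the proposition only demands boundedness so this is immaterial. The paper's version avoids the intersection lemma and is marginally shorter; yours is a natural ``edge-dual'' of the same idea, and is arguably conceptually cleaner in that the recovered data $\chi$ directly labels half-edges, which is precisely the structure one wants. Both reduce to the same preceding claim and both yield the injectivity-on-isomorphism-types consequence used later in the paper.

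Two small presentational notes. First, it is worth stating that the identity $C_v\cap C_w=\{\chi(\{v,w\})\}$ is only asserted and used for $\{v,w\}\in E(D)$, and that this is all that is needed since $D$ itself is part of the data. Second, you should say explicitly that distinct edges through $v$ already have distinct $\chi$-colors (by properness), so the map $i\mapsto g_v(i)$ is well defined on $C_v$; this is obvious but worth one clause.
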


\begin{proof}
Choose a linear ordering $<$ of the vertices of $M$ and define the
function $\mathrm{missing}_{<}$ as above. We first pick a coloring
$c_{0}$ of $D$ such that any two vertices at distance at most $2$
have distinct colors. By Brooks' theorem this requires at most $k=\left(d+1\right)^{2}+1$
colors. We now define a coloring $c_{1}$ of $D$: for each $v\in D^{(0)}$,
list its neighbors $w_{1},\ldots,w_{d+1}$ (in some arbitrary ordering)
and define $c_{1}(v)$ to be the following tuple of pairs:
\[
c_{1}(v)=\left(c_{0}\left(w_{1}\right),\mathrm{missing}_{<}\left(v,w_{1}\right)\right),\ldots,\left(c_{0}\left(w_{d+1}\right),\mathrm{missing}_{<}\left(v,w_{d+1}\right)\right)\in\left(\left[k\right]\times\left[d+1\right]\right)^{d+1}.
\]
Now define $c(v)=\left(c_{0}(v),c_{1}(v)\right)$ for each $v\in D^{(0)}$.
This is a coloring with at most $k\cdot\left(k\left(d+1\right)\right)^{d+1}$
colors. It is clear that knowing $c$ allows us to reconstruct $\mathrm{missing}_{<}$,
and hence the simplicial isomorphism type of $M$.
\end{proof}

\section{Graphs and triangulated manifolds}

\label[section]{sec:graphs_and_manifolds}

We put together our previous results to prove the main results of
the paper.
\begin{thm}
\label[theorem]{thm:complexes_to_manifolds}Let $\mathcal{C}$ be
a class of simplicial $2$-complexes in which all vertex degrees are
bounded by some $k\in\mathbb{N}$. Fix a dimension $d\ge3$. Then
there is a function
\[
F:\mathcal{C}\rightarrow\left\{ \text{triangulated \ensuremath{d}-manifolds}\right\} 
\]
and a constant $c>0$ satisfying that for each $\Delta\in\mathcal{C}$:
\begin{enumerate}
\item If $\Delta$ has $n$ vertices then $F(\Delta)$ has at most $cn$
facets.
\item If $\Delta$ is nulhomologous over a field $\mathbb{F}$ then $F(\Delta)$
is a homology sphere over $\mathbb{F}$; if $\Delta$ is collapsible
then $F(\Delta)$ is a $d$-sphere; if $\Delta$ is contractible and
$d\ge4$ then $F(\Delta)$ is a homotopy sphere.
\item $F(\Delta)$ is quasi-isometric to the $1$-skeleton of $\Delta$
with quasi-isometry constant $c$.
\item $F$ is injective on isomorphism types: if $F(\Delta)$ is simplicially
isomorphic to $F(\Delta')$ then $\Delta$ is simplicially isomorphic
to $\Delta'$.
\end{enumerate}
\end{thm}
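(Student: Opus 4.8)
The plan is to chain the constructions of the previous sections: pass from $\Delta$ to a manifold with boundary via \Cref{thm:handle_construction}, take the boundary, and then apply the encoding of \Cref{thm:color_encoding} to make the assignment injective. In detail, let $\Delta\in\mathcal C$ be given; I treat the connected case, which is the one of interest (and the only one in which the sphere conclusions of (2) can hold). Since all vertex degrees of $\Delta$ are at most $k$, every vertex star of $\Delta$ has at most $k':=1+k+\binom{k}{2}$ faces, so \Cref{thm:handle_construction} applied in dimension $d+1\ge 4$ with the bound $k'$ yields a PL $(d+1)$-manifold with boundary $M(\Delta)$, built from $0$-, $1$-, $2$-handles $\{H_{i,\sigma}\}$ indexed by the faces of $\Delta$, carrying a spine $\Delta'\simeq\Delta$ with $M(\Delta)\searrow\Delta'$ (as $\Delta$ is finite), triangulated so that each $H_{i,\sigma}$ is a subcomplex with $O_{d,k}(1)$ faces. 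Set $N(\Delta):=\partial M(\Delta)$. Then $M(\Delta)$ is connected, and so is $N(\Delta)$: assembling the handles in order of index, the boundary is a union of spheres after the $0$-handles, becomes connected once the $1$-handles are attached (because $\Delta^{\le 1}$ is connected, and each $1$-handle performs a $0$-surgery on the boundary), and remains connected under the subsequent circle surgeries coming from the $2$-handles since $d\ge 3$; compare the surgery description in the proof of \Cref{thm:boundary}.

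Because $M(\Delta)\searrow\Delta'\simeq\Delta$, the spaces $M(\Delta)$ and $\Delta$ have the same homotopy type and homology. Hence: if $\Delta$ is nulhomologous over the field $\mathbb F$ then $H_k(M(\Delta);\mathbb F)=0$ for $k>0$, and since $\mathbb F$ is a PID and $N(\Delta)$ is connected, \Cref{thm:boundary}(2) shows that $N(\Delta)$ is an $\mathbb F$-homology $d$-sphere; if $\Delta$ is collapsible then $\Delta'$, hence $M(\Delta)$, is PL-collapsible (collapsibility being a PL-homeomorphism invariant), so $N(\Delta)$ is a $d$-sphere by \Cref{thm:boundary}(1); if $\Delta$ is contractible and $d\ge 4$ then $M(\Delta)$ is a contractible $(d+1)$-manifold with $d+1\ge 5$ built from $0$-, $1$-, $2$-handles, so $N(\Delta)$ is a homotopy $d$-sphere by \Cref{thm:boundary}(3).

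Next I encode $\Delta$ into a bounded facet-coloring of $N(\Delta)$. Every facet of $N(\Delta)$ lies in the boundary part of a unique handle, since distinct handles meet only along attaching regions and those become interior to $M(\Delta)$; define $c\colon N(\Delta)^{(d)}\to\{0,1,2\}$ to record the index of that handle. The pair $(N(\Delta),c)$ determines $\Delta$. Indeed, pass to the dual graph $D=D(N(\Delta))$ with vertices coloured by $c$. Two facets in handles $H_{i,\sigma}$ and $H_{j,\tau}$ can be adjacent in $D$ only if these handles share a ``seam'' (the frontier of an attaching region) inside $\partial M(\Delta)$, which forces $\sigma\cap\tau\neq\emptyset$; moreover if $i=j$ then the relevant attaching regions are pairwise disjoint by construction (distinct disks $\Sigma_i$ inside a $0$-handle; disjoint regular neighbourhoods of the curves $\gamma_\sigma$), so their seams are disjoint and no such adjacency occurs. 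Conversely each piece $H_{i,\sigma}\cap N(\Delta)$ is connected — the complement in a sphere of a regular neighbourhood of an embedded graph or arc, hence connected for $d\ge 3$ — and whenever $\sigma\subset\tau$ the corresponding pieces share a seam. Therefore the colour-$0$ connected components of $D$ biject with $\Delta^{(0)}$, the colour-$1$ components with $\Delta^{(1)}$ (each adjacent to exactly the two colour-$0$ components of its endpoints), and the colour-$2$ components with $\Delta^{(2)}$ (each adjacent to the three colour-$1$ components of its edges); this incidence data reconstitutes the abstract complex $\Delta$.

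Now set $F(\Delta):=\mathrm{Enc}_{d,3}(N(\Delta),c)$, a triangulated $d$-manifold. For injectivity (4): if $F(\Delta)\cong F(\Delta')$ then $(N(\Delta),c)\cong(N(\Delta'),c')$ by \Cref{thm:color_encoding}(3), whence $\Delta\cong\Delta'$ by the previous paragraph. For the face count (1): $\Delta$ has $O_k(n)$ faces, each handle of $M(\Delta)$ has $O_{d,k}(1)$ faces, so $N(\Delta)$ has $O_{d,k}(n)$ facets, and \Cref{thm:color_encoding}(1) multiplies this by the constant $m(d,3)$. For the topology (2): \Cref{thm:color_encoding}(1) gives a PL homeomorphism $F(\Delta)\simeq N(\Delta)$, so the conclusions of the second paragraph transfer verbatim. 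For the quasi-isometry (3): relate each facet of $F(\Delta)$ to the facet of $N(\Delta)$ containing it, and each facet of $N(\Delta)$ to a vertex of the face $\sigma$ with $H_{i,\sigma}$ containing it; using \Cref{thm:color_encoding}(1)--(2) and the bounded-triangulation property of the handles, both relations satisfy the hypotheses of \Cref{lem:quasi_isometric_relations}, so $D(F(\Delta))$ — equivalently, the metrized complex $F(\Delta)$ itself — is quasi-isometric to $\Delta^{\le 1}$ with constants depending only on $d,k$. The main obstacle is the connectivity bookkeeping behind the third paragraph and the last point of the fourth: that $N(\Delta)$ and each $H_{i,\sigma}\cap N(\Delta)$ are connected, that same-index handle pieces are never dual-adjacent, and that the dual graph restricted to the finite cluster of handle pieces around a fixed edge of $\Delta$ is connected with diameter bounded in terms of $d,k$ — this last fact is exactly where part (3) of \Cref{thm:handle_construction} is essential.
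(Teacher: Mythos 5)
Your proposal follows essentially the same route as the paper's own proof: thicken $\Delta$ to a $(d+1)$-manifold with boundary $M(\Delta)$ via \Cref{thm:handle_construction}, read off the topology of $\partial M(\Delta)$ from \Cref{thm:boundary} using $M(\Delta)\searrow\Delta'\simeq\Delta$, color facets of $\partial M(\Delta)$ by handle index, apply $\mathrm{Enc}_{d,3}$ for injectivity, and verify the quasi-isometry via \Cref{lem:quasi_isometric_relations}. The only deviations are cosmetic: you reconstruct $\Delta$ from the color classes of the dual graph in slightly more spelled-out form and split the quasi-isometry relation into two steps (facets of $F(\Delta)$ to facets of $N(\Delta)$, then to vertices of $\Delta$) where the paper composes these in one relation, but the content is identical.
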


The idea is to thicken $\Delta$ into a triangulated manifold $M(\Delta)$
and pass to the boundary. The output of $F$ is more or less $\partial M(\Delta)$:
this satisfies all requirements except possibly the injectivity of
the construction, which requires us to encode additional information
in the triangulation. We do this using the results of \Cref{sec:encoding_facet_colorings}.
\begin{proof}
From the complexes $\Delta\in\mathcal{C}$ we construct $\left(d+1\right)$-dimensional
triangulated manifolds with boundary $M(\Delta)$ using \Cref{thm:handle_construction}
(the input degree bound is the degree bound of complexes in $\mathcal{C}$).
For such a manifold $M(\Delta)$, we color the facets of $\partial M(\Delta)$
with $3$ colors, as follows: if $\sigma$ is such a facet then is
contained in a unique handle $H$ of $M(\Delta)$, and $H$ is an
$i$-handle corresponding to an $i$-face of $\Delta$. Assign $\sigma$
the color $c(\sigma)=i$.
\begin{claim*}
The isomorphism type of $\Delta$ can be reconstructed from the colored
triangulation $\left(\partial M(\Delta),c\right)$.
\end{claim*}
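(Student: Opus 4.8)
The plan is to reconstruct the face poset of $\Delta$ intrinsically from the dual graph $D$ of $\partial M(\Delta)$ together with the given coloring $c$. Recall that $\partial M(\Delta)$ is a closed triangulated $d$-manifold, so its facets are $d$-simplices, and that by \Cref{thm:handle_construction}(3) each handle $H_{i,\tau}$ (indexed by a face $\tau\in\Delta^{(i)}$) is a subcomplex of the triangulation of $M(\Delta)$; hence for every face $\tau\in\Delta$ the \emph{piece} $P_\tau:=\partial M(\Delta)\cap H_{\dim\tau,\tau}$ is a subcomplex of $\partial M(\Delta)$. Since each facet of $\partial M(\Delta)$ is a face of a unique $(d+1)$-simplex of $M(\Delta)$, which lies in a unique handle, the facets of $\partial M(\Delta)$ are partitioned among the pieces, with the facets of $P_\tau$ receiving color $\dim\tau$. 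In particular the three color classes of $c$ are exactly the facets contained in $0$-, $1$-, and $2$-handle pieces respectively.

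First I would recover the individual pieces from $(\partial M(\Delta),c)$. By the construction of \Cref{sec:constructing_M_Delta} distinct handles of the same index are disjoint in $M(\Delta)$ (the $0$-handles are disjoint copies; the $1$-handles are glued along pairwise disjoint regions; the $2$-handles along pairwise disjoint regular neighbourhoods of the disjoint paths $\gamma_\sigma$), so $P_\tau\cap P_{\tau'}=\emptyset$ whenever $\tau\ne\tau'$ have equal dimension, and no facet of $P_\tau$ is dual-adjacent to one of $P_{\tau'}$. On the other hand each $P_\tau$ is connected with connected dual graph: reading the handle boundaries off the construction, $P_v$ is the $d$-sphere $\partial H_{0,v}$ minus the attaching regions of the incident higher handles, which together form a regular neighbourhood of a subcomplex of dimension $\le 1$; similarly $P_e\cong S^{d-1}\times I$ minus regular neighbourhoods of finitely many arcs; and $P_\sigma\cong D^2\times S^{d-2}$. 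Since $d\ge3$ the deleted subcomplexes have codimension $\ge2$, so removing a regular neighbourhood of such a subcomplex from a connected manifold leaves it connected, and a connected triangulated manifold has connected dual graph. It follows that the connected components of the color-$i$ induced subgraph of $D$ are precisely the facet sets of the pieces $\{P_\tau:\tau\in\Delta^{(i)}\}$, so we obtain three sets $\mathcal P_0,\mathcal P_1,\mathcal P_2$ canonically in bijection with $\Delta^{(0)},\Delta^{(1)},\Delta^{(2)}$.

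Next I would recover the incidence structure, calling two pieces \emph{adjacent} when some facet of one shares a $(d-1)$-face with some facet of the other (equivalently, when the corresponding components of $D$ are joined by an edge). For $\tau\subsetneq\tau'$ the handles $H_{\dim\tau,\tau}$ and $H_{\dim\tau',\tau'}$ meet in $\partial M(\Delta)$ along a codimension-$1$ subcomplex, namely the boundary of the pertinent attaching disk (for instance a copy of $\partial\Sigma_i\cong S^{d-1}$ where a $1$-handle is glued to a $0$-handle), and along such a seam a facet of $P_\tau$ genuinely abuts a facet of $P_{\tau'}$, so the pieces are adjacent. Conversely the intersection pattern of the handles mirrors the incidence pattern of the faces of $\Delta$ (\Cref{thm:handle_construction}(1)), so $P_\tau$ and $P_{\tau'}$ are non-adjacent unless one of $\tau,\tau'$ is a face of the other; together with the previous paragraph this shows that the color-$1$ piece of an edge $e=\{v,w\}$ is adjacent to exactly the two color-$0$ pieces indexed by $v$ and $w$, and the color-$2$ piece of a triangle $\sigma=\{u,v,w\}$ is adjacent to exactly the three color-$0$ pieces indexed by $u,v,w$. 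Hence the abstract simplicial complex on vertex set $\mathcal P_0$ with one edge for each member of $\mathcal P_1$ (its pair of color-$0$ neighbours) and one $2$-face for each member of $\mathcal P_2$ (its triple of color-$0$ neighbours) is isomorphic to $\Delta$ via $P_v\mapsto v$; the two assignments are well defined and injective because a face of a simplicial complex is determined by its vertex set. Since every step used only $(\partial M(\Delta),c)$, this establishes the claim.

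I expect the main obstacle to lie in the geometric bookkeeping of the two middle paragraphs: one must extract enough of the explicit handle triangulations of \Cref{sec:constructing_M_Delta} to be certain that (a) each piece $P_\tau$ is connected --- this is exactly where the hypothesis $d\ge3$ is needed, since for $d=2$ a piece $P_e$ could be split into several annular components by the attaching regions of its $2$-handles --- and that (b) the seam between two incident handles genuinely carries $(d-1)$-faces of $\partial M(\Delta)$ separating a facet of one piece from a facet of the other. Both are routine consequences of the construction but require care; everything else is formal poset reconstruction.
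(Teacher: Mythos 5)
Your proposal is correct and follows essentially the same route as the paper: you decompose $\partial M(\Delta)$ into the handle pieces $P_\tau$, recover them as the connected components of the color classes in the dual graph (using that the removed attaching regions are neighborhoods of a graph and hence, by the codimension-$\ge 2$ argument requiring $d\ge 3$, do not disconnect), and then read off the face poset of $\Delta$ from adjacencies between pieces of different colors. The only cosmetic difference is that you verify connectivity case-by-case via explicit homeomorphism types and reconstruct $\Delta$ directly through vertex sets of edges and triangles, whereas the paper gives a single uniform connectivity argument on $\partial H$ and recovers the poset via $i$-to-$(i+1)$ containments.
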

\begin{proof}
Observe that for each handle $H\subset M(\Delta)$ the boundary $\partial H\subset M(\Delta)$
is a triangulated $d$-sphere. The submanifold-with-boundary $\partial H\cap\partial M(\Delta)$
has path-connected interior: its complement within $\partial H$ is
exactly the intersection of $H$ with the other handles of $M(\Delta)$,
and these intersections are regular neighborhoods of points, arcs,
and circles within $\partial H$. In particular, $\partial H\setminus\left(\partial H\cap\partial M(\Delta)\right)$
is the regular neighborhood of a graph. The complement of a graph
in a $d$-manifold (where $d\ge3$) is path connected, and hence so
is the complement of its closed regular neighborhood. In particular,
$\partial H\cap\partial M(\Delta)$ is a triangulated $d$-manifold
with boundary that has a connected dual graph. 

Note that for handles $H,H'\subset M(\Delta)$ corresponding to faces
$\rho,\rho'$ of $\Delta$ we have $H\cap H'\neq\emptyset$ if and
only if $\rho\subset\rho'$ or $\rho'\subset\rho$, and in this case
also $\partial H\cap\partial H'\cap\partial M$ intersect in a $\left(d-1\right)$-dimensional
subcomplex. Given $\left(\partial M(\Delta),c\right)$, let $D$ be
the dual graph of $\partial M(\Delta)$ (so that $c$ induces a vertex
coloring of $D$). The connected components in $D$ of each color
class $c(i)$ are in bijection with the $i$-faces of $\Delta$, and
a connected component $K$ of a color class $c(i)$ has an edge to
a connected component $L$ of a color class $c(i+1)$ if and only
if the face of $\Delta$ corresponding to $K$ is contained in the
face of $\Delta$ corresponding to $L$. Thus the isomorphism type
of $\Delta$ can be reconstructed $(D,c)$.
\end{proof}
Apply \Cref{thm:color_encoding} to the colored triangulations $\left(\partial M(\Delta),c\right)$
we obtain from $\mathcal{C}$. Composing these constructions gives
a function 
\[
F:\mathcal{C}\rightarrow\left\{ \text{triangulated \ensuremath{d}-manifolds}\right\} .
\]
This function is injective on isomorphism classes by the claim above
and by \Cref{thm:color_encoding}, which proves (4). The fact that
the total number of faces in $\Delta$ is at most a constant times
the number of vertices (because of the degree bound) together with
part (1) of \Cref{thm:color_encoding} and part (3) of \Cref{thm:handle_construction}
implies part (1). Part (2) follows from the fact that $\mathrm{Enc}_{d,3}\left(\partial M(\Delta),c\right)$
is PL-homeomorphic to $\partial M(\Delta)$, together with the fact
(part (2) of \Cref{thm:handle_construction}) that $M(\Delta)\searrow\Delta$:
this implies that if $\Delta$ is collapsible then so is $M(\Delta)$,
and in general $\Delta$ is homotopy equivalent to $M(\Delta)$, and
the topological conclusions follow directly from \Cref{thm:boundary}.
It remains to prove (3).

Let $\Delta\in\mathcal{C}$ and let $X=F(\Delta)=\mathrm{Enc}_{d,3}(\partial M(\Delta),c)$
be the corresponding triangulated manifold. Denote the dual graph
of $X$ by $D$. Since $X$ is a subdivision of $\partial M(\Delta)$
in which each facet is replaced by a combinatorial disk with (uniformly)
boundedly many facets, for each handle $H\subset M(\Delta)$ we have
a subdivision of $H\cap\partial M(\Delta)$ induced by $X$. Denote
the set of vertices of $D$ corresponding to the facets of $H\cap\partial M(\Delta)$
by $D_{H}$. Then $\left\{ D_{H}\mid H\text{ is a handle of \ensuremath{M}}\right\} $
partitions the vertex set of $D$. By construction, each $D_{H}$
contains boundedly many vertices, with the bound depending only on
the degree bound of complexes in $\mathcal{C}$ and on the dimension
$d$. By the proof of the claim above, each $D_{H}$ is also nonempty
and induces a connected subgraph of $D$. Fix $M$ to be a uniform
bound on the number of vertices in a subset $D_{H}$, and note that
this is also a bound on the diameter of the induced subgraph of such
a set. Define a relation
\[
R\subset\left(\text{vertices of \ensuremath{D}}\right)\times\left(\text{vertices of \ensuremath{\Delta}}\right)
\]
so that $xRv$ if and only if $x\in D_{H}$ where $H$ is a handle
of $M$ corresponding to a face $\sigma\in\Delta$ satisfying $v\in\sigma$.
We prove that $R$ satisfies the assumptions of \Cref{lem:quasi_isometric_relations}:
\begin{itemize}
\item If $x$ is a vertex of $D$ then $x\in D_{H}$ for some $H$, and
the face of $\Delta$ corresponding to $H$ contains a vertex $v$
for which $xRv$.
\item If $v$ is a vertex of $\Delta$ and $H$ is the corresponding handle
of $M(\Delta)$ then $D_{H}\neq\emptyset$, so there is some $x\in D_{H}$
with $xRv$.
\item Let $x,x'$ be vertices of $D$ at distance at most $1$, and assume
$xRv$ and $x'Rv'$. Then $x\in D_{H}$ and $x'\in D_{H'}$ for $H,H'$
handles of $M$ corresponding to either the same face $\sigma'=\sigma\in\Delta$
or (without loss of generality) to a pair $\sigma'\subset\sigma$
of faces of $\Delta$. Further, $v\in\sigma$ and $v'\in\sigma'$
by definition of $R$. Therefore either $v=v'$ or $v,v'$ are adjacent
in the $1$-skeleton (since both are vertices of the same face $\sigma$).
Hence $d(v,v')\le1$.
\item Let $v,v'$ be vertices of $\Delta$ at distance at most $1$, and
assume $xRv$ and $x'Rv'$. Then $x\in D_{H}$ and $x'\in D_{H'}$
for $H,H'$ handles of $M$ corresponding to faces $\sigma,\sigma'$
containing $v$ and $v'$ respectively. If $H_{v}$ and $H_{v'}$
are the handles corresponding to $v$ and $v'$, then there is an
edge between each consecutive pair of subsets in the sequence $D_{H},D_{H_{v}},D_{H_{v'}},D_{H'}$,
and these subsets induce connected subgraphs of diameter at most $M$.
Hence $d(x,x')\le4M+3$.
\end{itemize}
By \Cref{lem:quasi_isometric_relations}, $D$ is quasi-isometric
to $\Delta^{\le1}$ with constants depending only on the dimension
$d$ and the degree bound $k$ of complexes in $\mathcal{C}$.
\end{proof}
\begin{cor}
[\Cref{thm:expanding_S3s}]For each $d \ge 3$ there is an infinite family of combinatorially
distinct triangulated $d$-spheres such that their dual graphs form
a family of $(d+1)$-regular expanders. Moreover, there is a uniform bound (depending only on the dimension $d$) on the number of simplices incident to each vertex of these triangulations.
\end{cor}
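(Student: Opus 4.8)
The plan is to reduce the statement to the production of a good family of collapsible $2$-complexes and then run it through the machine of \Cref{thm:complexes_to_manifolds}. Concretely, I would first establish (this is the content deferred to \Cref{sec:telescopes}) that there is an infinite family $\{\Delta_n\}_{n\in\mathbb{N}}$ of finite \emph{collapsible} simplicial $2$-complexes with $|\Delta_n^{(0)}|\to\infty$, with all vertex degrees bounded by a single constant $k$, and such that the $1$-skeleta $\{\Delta_n^{\le1}\}$ form a (bounded-degree) family of expanders. Granting this, apply \Cref{thm:complexes_to_manifolds} to the class $\mathcal{C}=\{\Delta_n\}$ and the given dimension $d\ge3$, obtaining triangulated $d$-manifolds $F(\Delta_n)$. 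Part (2) of that theorem turns collapsibility of $\Delta_n$ into the conclusion that $F(\Delta_n)$ is a triangulated $d$-\emph{sphere}; this is precisely why one wants collapsibility and not merely contractibility, since for $d=3,4$ a homotopy sphere would not obviously be a sphere. Part (4) says $F$ is injective on isomorphism types, so the $F(\Delta_n)$ are pairwise combinatorially distinct (the $\Delta_n$ have strictly growing vertex numbers after passing to a subsequence), giving an infinite family.

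Next I would check that the dual graphs are expanders. Each $F(\Delta_n)$ is a closed triangulated $d$-manifold, so its dual graph $D_n$ is $(d+1)$-regular; in particular the $D_n$ have uniformly bounded degree. By part (3) of \Cref{thm:complexes_to_manifolds}, $D_n$ is quasi-isometric to $\Delta_n^{\le1}$ with quasi-isometry constant independent of $n$, and since $\{\Delta_n^{\le1}\}$ is a bounded-degree expander family, \Cref{fact:expanders_via_quasi_isometry} (applied via the near-inverse quasi-isometry $D_n\to\Delta_n^{\le1}$) yields that $\{D_n\}$ is an expander family. The remaining claim — a uniform bound on the number of simplices incident to each vertex — is bookkeeping: in \Cref{thm:handle_construction}(3) each handle of $M(\Delta_n)$ is a subcomplex with a bounded number of faces, each vertex of $M(\Delta_n)$, hence of $\partial M(\Delta_n)$, lies in boundedly many handles because $\Delta_n$ has bounded degree, and passing to $F(\Delta_n)=\mathrm{Enc}_{d,3}(\partial M(\Delta_n),c)$ multiplies vertex degrees by only a constant by \Cref{thm:color_encoding}(2); a bound on incident edges bounds incident simplices of all dimensions, since every simplex through a vertex $v$ has all its vertices adjacent to $v$.

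The hard part is the ingredient I have so far only invoked: building collapsible $2$-complexes of bounded degree whose $1$-skeleta are expanders. The naive approach — start from a bounded-degree expander $G$ and fill a normally generating set of short cycles by triangulated disks as in \Cref{prop:graphs_to_2_complexes} — produces only a nulhomologous (or simply connected) complex, which need not be collapsible: the dunce hat is the standard warning that a $2$-dimensional contractible complex can fail to collapse. The construction in \Cref{sec:telescopes} instead fills the relevant cycles by disks organized along \emph{telescopes}, arranged so that the whole complex collapses while the $1$-skeleton stays quasi-isometric to the original expander and the vertex degrees remain bounded; this is where essentially all the work lies, and with its output in hand the argument above completes the proof.
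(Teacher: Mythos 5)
Your proof is correct and follows the same route as the paper: produce collapsible bounded-degree $2$-complexes with expanding $1$-skeleta (deferred to \Cref{thm:collapsible_expanding_complexes}), run them through \Cref{thm:complexes_to_manifolds}, use part (2) to get $d$-spheres from collapsibility, part (3) together with \Cref{fact:expanders_via_quasi_isometry} for expansion of the dual graphs, and part (4) for distinctness. Your added bookkeeping for the uniform bound on simplices per vertex — bounded degree in $\Delta_n$ implies each vertex of $\partial M(\Delta_n)$ meets boundedly many handles, each with boundedly many faces, and $\mathrm{Enc}_{d,3}$ only multiplies degrees by a constant — is correct and fills in a point the paper's short proof leaves implicit.
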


\begin{proof}
It suffices to construct an infinite family of $2$-complexes $\left\{ \Delta_{n}\right\} _{n\in\mathbb{N}}$
such that each $\Delta_{n}$ is collapsible, the vertex degrees in
the family are uniformly bounded by some $k$, and the $1$-skeleta
$\left\{ \Delta_{n}^{\le1}\right\} _{n\in\mathbb{N}}$ form an expander
family: applying \Cref{thm:complexes_to_manifolds}
to such a family gives a corresponding family of triangulated $d$-spheres
$\left\{ \Sigma_{n}\right\} _{n\in\mathbb{N}}$ such that the dual
graph of each $\Sigma_{n}$ is quasi-isometric to $\Delta_{n}^{\le1}$,
with uniformly bounded quasi-isometry constants. By \Cref{fact:expanders_via_quasi_isometry},
this implies that the dual graphs to $\left\{ \Sigma_{n}\right\} _{n\in\mathbb{N}}$
are an expander family. We construct a suitable family $\left\{ \Delta_{n}\right\} _{n\in\mathbb{N}}$
in \Cref{thm:collapsible_expanding_complexes}.
\end{proof}

\begin{cor}[\Cref{cor:expanding_manifolds}]
    Let $M$ be a triangulable $d$-manifold. Then there is an infinite family of combinatorially distinct triangulations of $M$ in which the vertex degrees are uniformly bounded (with the bound depending on $M$) and for which the dual graphs form a family of $(d+1)$-regular expanders. If $M$ is PL, there exists such a family in which the triangulations are combinatorial.
\end{cor}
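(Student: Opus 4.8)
The plan is to fix a single triangulation $T$ of $M$ (a combinatorial one if $M$ is PL) and to take the simplicial connected sums $T\mathbin{\#}\Sigma_n$ with the expanding triangulated $d$-spheres $\Sigma_n$ produced by \Cref{thm:expanding_S3s}. Concretely, pick a facet $\sigma_0$ of $T$ and a facet $\sigma_0'$ of $\Sigma_n$, delete their interiors, and glue $\partial\sigma_0$ to $\partial\sigma_0'$ along an arbitrary simplicial isomorphism $\phi$ (both are copies of $\partial\Delta^d$). Since $|\sigma_0'|$ is a PL ball in the topological sphere $|\Sigma_n|$, the space $|\Sigma_n\setminus\operatorname{int}\sigma_0'|$ is a $d$-ball, while $|T\setminus\operatorname{int}\sigma_0|$ is $M$ with an open tame ball removed; extending $\phi$ over the ball by the Alexander trick shows $|T\mathbin{\#}\Sigma_n|\cong M$. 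When $M$ is PL we take $T$ combinatorial and note that the $\Sigma_n$ are combinatorial spheres by their construction (as PL-homeomorphs, via $\mathrm{Enc}$, of boundaries of PL-collapsible manifolds; a bounded further subdivision would make this literal without affecting the dual graph up to a bounded quasi-isometry). The link of an identified vertex $v$ in $T\mathbin{\#}\Sigma_n$ is then $\operatorname{lk}_T(v)\mathbin{\#}\operatorname{lk}_{\Sigma_n}(v)$, a connected sum of two combinatorial $(d-1)$-spheres, hence a combinatorial sphere; so $T\mathbin{\#}\Sigma_n$ is a combinatorial triangulation of $M$.

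The vertex degrees of $T\mathbin{\#}\Sigma_n$ are uniformly bounded: a vertex away from $\partial\sigma_0$ keeps its link from $T$ or from $\Sigma_n$, and \Cref{thm:expanding_S3s} bounds the degrees in the $\Sigma_n$ in terms of $d$ alone, while $T$ contributes only finitely many degrees; an identified vertex $v$ has degree $\deg_T(v)+\deg_{\Sigma_n}(v)-d$. All of these are bounded in terms of $T$ (hence of $M$) and $d$, independently of $n$. Finally $f_d(T\mathbin{\#}\Sigma_n)=f_d(T)+f_d(\Sigma_n)-2\to\infty$, so after passing to a subsequence the triangulations are pairwise combinatorially distinct.

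The substance is in the dual graphs. Write $D_n=D(\Sigma_n)$ and $D_n^{\#}=D(T\mathbin{\#}\Sigma_n)$; both are $(d+1)$-regular, since $M$ is closed. The graph $D_n^{\#}$ is obtained from $D_n$ by deleting the vertex $\sigma_0'$, adjoining the fixed finite graph $G_T:=D(T)\setminus\{\sigma_0\}$ (which is connected because $M$ is connected, and has at most $f_d(T)$ vertices), and adding $d+1$ edges matching the former neighbors of $\sigma_0'$ in $D_n$ to the former neighbors of $\sigma_0$ in $D(T)$ according to $\phi$. To see $D_n^{\#}$ is quasi-isometric to $D_n$ with constants independent of $n$, I would apply \Cref{lem:quasi_isometric_relations} with the relation $R\subseteq V(D_n^{\#})\times V(D_n)$ that is the identity on the copy of $V(D_n)\setminus\{\sigma_0'\}$ and relates every vertex of the $G_T$-part to $\sigma_0'$. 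Conditions (2) and (3) hold with the constant $f_d(T)+1$: a single step in $D_n^{\#}$ either is a step in $D_n$ or crosses a matching edge into $G_T$ (sent by $R$ to $\sigma_0'$), and conversely a neighbor of $\sigma_0'$ in $D_n$ is adjacent in $D_n^{\#}$ to $G_T$, whose diameter is at most $f_d(T)$. Connected graphs being $(1,1)$-quasi-geodesic, \Cref{lem:quasi_isometric_relations} furnishes a quasi-isometry $D_n^{\#}\to D_n$ with constants depending only on $f_d(T)$ and $d$. Since $\{D_n\}$ is an expander family by \Cref{thm:expanding_S3s} and the degree bound $d+1$ is uniform, \Cref{fact:expanders_via_quasi_isometry} shows $\{D_n^{\#}\}$ is an expander family, which completes the proof.

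I expect the main obstacle to be bookkeeping rather than a genuine difficulty: checking that the quasi-isometry constants above really depend only on the fixed complex $T$ and on $d$, and handling the mild degeneracies in the last step (connectedness of $G_T$, the case of $T$ with few facets). The purely topological ingredients—that $M\mathbin{\#}S^d\cong M$, that this connected sum is PL and preserves combinatoriality when the summands are combinatorial manifolds, and that a simplex interior removed from a triangulated sphere leaves a ball—are standard, and the expansion transfer is exactly what \Cref{fact:expanders_via_quasi_isometry} is for.
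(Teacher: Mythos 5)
Your proposal is correct and takes essentially the same approach as the paper: fix a triangulation $T$ of $M$, form the simplicial connected sums $T \mathbin{\#} \Sigma_n$ with the expander spheres $\Sigma_n$ from \Cref{thm:expanding_S3s}, and transfer expansion to the dual graphs via \Cref{lem:quasi_isometric_relations} using the relation that is the identity on facets of $\Sigma_n \setminus \sigma_n$ and collapses all facets of $T \setminus \sigma_0$ to $\sigma_n$. The paper uses precisely this relation (with constants $L=c=1$ and $M = \mathrm{diam}(D(T))+1$) followed by \Cref{fact:expanders_via_quasi_isometry}, so the two arguments coincide; your extra remarks on degree bounds, combinatoriality, and distinctness are routine and do not change the route.
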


\begin{proof}
    Fix a triangulation $X$ of $M$, taking $X$ combinatorial if $M$ is PL. Let $\{\Sigma_n\}_{n \in \mathbb{N}}$ be a family of triangulated $d$-spheres as in \Cref{thm:expanding_S3s} (see the previous corollary). Fix a facet $\sigma_0$ of $X$, and for each $n$ choose a facet $\sigma_n$ of $\Sigma_n$, together with a simplicial isomorphism $f_n: \partial \sigma_0 \to \partial \sigma_n$. Delete the interiors of $\sigma_0$ and of $\sigma_n$ and glue $X$ to $\Sigma_n$ along the boundaries of the two facets via $f_n$: this yields a simplicial complex $Y_n$ which is PL homeomorphic to $X \# S^d \simeq X$. (Note that the gluing is indeed simplicial: $\partial \sigma_0$ is an induced subcomplex of $X \setminus \sigma_0$, and similarly for $\partial \sigma_n$ and $\Sigma_n \setminus \sigma_n$.)

    We prove that the dual graphs of $\{Y_n\}_{n \in \mathbb{N}}$ are an expander family. By \Cref{fact:expanders_via_quasi_isometry}, it suffices to show that they are quasi-isometric (with uniform constants) to the dual graphs of $\{\Sigma_n\}_{n \in \mathbb{N}}$. For each $n\in \mathbb{N}$ construct the following relation $R_n \subset Y_n^{(d)} \times \Sigma_n^{(d)}$: 
    \[R_n = \{(\rho,\sigma_0) \mid \rho \in X^{(d)}\setminus \{\sigma_0\}\} \cup \{(\rho, \rho) \mid \rho \in \Sigma_n^{(d)} \setminus \{\sigma_n\}\}.\]
    That is, a pair of facets $(\rho,\sigma)$ is in $R_n$ if $\rho=\sigma$ is a facet of $\Sigma_n \setminus \sigma_n$ or if $\rho$ is a facet of $X \setminus \sigma_0$ and $\sigma=\sigma_n$. Identifying the facets of $X$ with the vertices of its dual graph $D(X)$ and similarly for $\Sigma_n$ and $Y_n$, each such relation satisfies the conditions of \Cref{lem:quasi_isometric_relations} with $L=c=1$ and $M=\mathrm{diam}(D(X))+1$, so we obtain the desired quasi-isometries.
\end{proof}

\begin{lem}
\label[lemma]{lem:uniform_shortness_of_output}Let $\mathcal{C}$
be a class of bounded-degree simplicial $2$-complexes which are nulhomologous
over $\mathbb{F}$. Then the corresponding family of triangulated
$3$-dimensional homology manifolds output by \Cref{thm:complexes_to_manifolds}
has dual graphs which are uniformly short over $\mathbb{F}$. The
bound $L$ on the length of a cycle in a homology basis for these
graphs depends only on the degree bound for complexes in $\mathcal{C}$,
and not on $\mathbb{F}$.
\end{lem}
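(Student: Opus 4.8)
The plan is to obtain the cycle basis almost for free from \Cref{prop:sphere_duals_short}, and then to upgrade ``short'' to ``uniformly short'' by proving a uniform bound on the number of tetrahedra meeting any single edge of $F(\Delta)$. For $\Delta \in \mathcal{C}$, part (2) of \Cref{thm:complexes_to_manifolds} (with output dimension $d = 3$) makes $N := F(\Delta)$ a PL $3$-manifold that is an $\mathbb{F}$-homology sphere, so in particular the link of every edge of $N$ is a circle. Since a field is a PID, \Cref{prop:sphere_duals_short} and its proof apply to $N$: for each edge $e$ of $N$ the tetrahedra of $N$ containing $e$, read in cyclic order, form a simple cycle $\gamma_e$ in the dual graph $D := D(N)$; the family $\{\gamma_e\}_{e \in N^{(1)}}$ spans $H_1(D;\mathbb{F})$; and each edge of $D$ lies on at most $3$ of the $\gamma_e$ (the triangle it comes from has $3$ edges). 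As $D$ is $4$-regular, this already establishes conditions (1) and (2) of the definition of a uniformly short class, with constants independent of $\Delta$ and of $\mathbb{F}$, so everything reduces to bounding $\ell(\gamma_e)$ --- the number of tetrahedra of $N$ containing $e$ --- uniformly in $\Delta$.

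For this bound I would exploit the explicit structure of $N$. From the proof of \Cref{thm:complexes_to_manifolds}, $N = \mathrm{Enc}_{3,3}(\partial M, c)$ where $M = M(\Delta)$ is the $4$-dimensional handle manifold of \Cref{thm:handle_construction}, built with a vertex-star size bound that is itself a function of the degree bound $k$ of $\mathcal{C}$. By part (3) of \Cref{thm:handle_construction} we may take the triangulation of $M$ so that each handle $H_{i,\sigma}$, hence also each piece $\partial M \cap H_{i,\sigma}$ of $\partial M$, is a subcomplex with at most $K = K(k)$ faces; and by part (1) of \Cref{thm:color_encoding}, $N$ is a subdivision of $\partial M$ in which each facet of $\partial M$ is subdivided into at most $m = m(3,3)$ tetrahedra. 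Fix an edge $e$ of $N$ and let $\rho$ be its carrier in $\partial M$ (the smallest face of $\partial M$ containing $e$). Because $N$ subdivides $\partial M$, every tetrahedron of $N$ containing $e$ lies inside a facet of $\partial M$ containing $\rho$, so $\ell(\gamma_e) \le m \cdot |\{\sigma \in \partial M^{(3)} : \rho \subseteq \sigma\}|$. Each such facet lies in a handle $H$ with $\rho \subseteq H$; if the handles containing $\rho$ correspond to faces $\sigma_1, \dots, \sigma_p$ of $\Delta$, then by part (1) of \Cref{thm:handle_construction} these faces pairwise intersect, so all of them lie in the union of the (at most three) closed stars in $\Delta$ of the vertices of $\sigma_1$; since a degree-$k$ vertex of a $2$-complex has at most $1 + k + \binom{k}{2}$ faces in its closed star, $p$ is bounded by a function of $k$ alone. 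Multiplying the bounds gives $\ell(\gamma_e) \le L$ for some $L = L(k)$, finishing the verification that $\{D(F(\Delta))\}_{\Delta \in \mathcal{C}}$ is uniformly short over $\mathbb{F}$.

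The one genuinely delicate point is exactly this last bound: in an arbitrary triangulated homology $3$-sphere an edge can meet unboundedly many tetrahedra, so the around-an-edge cycles of \Cref{prop:sphere_duals_short} are in general only short \emph{on average} (the weaker condition (3)), not uniformly; what rescues us is that $F(\Delta)$ is assembled from boundedly many handle-pieces of bounded size and then refined by a bounded subdivision, so it has $O_k(1)$ local complexity --- everything else is bookkeeping. Finally, I would note that $F$, the graphs $D(F(\Delta))$, the cycles $\gamma_e$, and the constant $L$ never mention $\mathbb{F}$: the field enters only through part (2) of \Cref{thm:complexes_to_manifolds}, which makes $F(\Delta)$ an $\mathbb{F}$-homology sphere and hence, via \Cref{prop:sphere_duals_short}, forces the $\gamma_e$ to span $H_1(D;\mathbb{F})$ --- so $L$ depends only on $k$, as claimed.
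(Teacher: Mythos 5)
Your proof is correct and follows essentially the same route as the paper: both take the cycles around the edges (the dual $2$-cells / links of $1$-simplices) as the generating set and then bound their lengths uniformly by observing that the star of any face of $F(\Delta)$ is contained in a bounded number of handle pieces, each of bounded size. You are somewhat more explicit than the paper in tracking the extra bounded subdivision contributed by $\mathrm{Enc}_{3,3}$ and in counting handles via pairwise-intersecting faces of $\Delta$, which is nice bookkeeping but not a different idea (your exact count $1+k+\binom{k}{2}$ for the closed star slightly undercounts, but any bound that is a function of $k$ alone suffices).
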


\begin{proof}
A homology basis for the dual graph of a triangulated homology sphere
$\Sigma$ is given by the dual $2$-cells (in the case of a $3$-manifold,
these are the cycles in the dual graph corresponding to the links
of $1$-simplices in $\Sigma$). If $\Sigma$ is the output of running
\Cref{thm:complexes_to_manifolds} on a $2$-complex with all vertex
degrees bounded by $k$ then the star of any face $\sigma\in\Sigma$
is contained in the union of a uniformly bounded number $m$ of handles
(depending only on $k$) and each such handle contains a uniformly
bounded number of faces $N$ (again depending only on $k$). Hence
the star of $\sigma$ contains at most $mN$ faces. Note that the
construction of $\Sigma$ does not depend on $\mathbb{F}$, so the
bound $L=mN$ in the statement depends only on $k$.
\end{proof}
\begin{lem}
\label[lemma]{lem:4_to_3}Let $\mathcal{C}$ be a class of $4$-regular
short graphs with parameters $k,L$. Then there is a class $\mathcal{D}$
of $3$-regular short graphs with parameters $k',L'$ depending only
on $k,L$ and a map $\mathcal{C}\rightarrow\mathcal{D}$
that is injective on isomorphism classes and increases the number of vertices by only a constant factor.
\end{lem}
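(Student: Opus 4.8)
The plan is to reduce the vertex degree from $4$ to $3$ by splitting each vertex in two, while arranging that the construction preserves shortness and can be reversed. Fix $G\in\mathcal{C}$. Since $G$ is connected and $4$-regular it is Eulerian, so it carries an orientation in which every vertex has in-degree and out-degree $2$. Replace each vertex $v$ by a pair $a_v,b_v$ joined by a new \emph{central} edge, where $a_v$ receives the two edges oriented into $v$ and $b_v$ receives the two oriented out of $v$; call the result $G'$. Then $G'$ is $3$-regular on $2|V(G)|$ vertices, and since an edge of $G$ oriented $u\to v$ becomes an edge of $G'$ from $b_u$ to $a_v$, the graph $G'$ is bipartite with parts $\{a_v\}$ and $\{b_v\}$ — in particular $G'$ has no triangles. (This last point is the reason to split along an Eulerian orientation rather than splitting the four edge-ends into two pairs arbitrarily, and it is what will make the map injective.)

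Next I would verify shortness. Contracting the central edges is a homotopy equivalence $G'\to G$, so the images $\gamma_i'$ of the given cycles $\gamma_i$ of $G$ — where $\gamma_i$ is rerouted through $G'$, traversing a central edge exactly when it enters a vertex along an ``in'' edge and leaves along an ``out'' edge — span $H_1(G';R)$. An original edge lies on $\gamma_i'$ iff it lay on $\gamma_i$, so on at most $k$ of them; a central edge at $v$ is traversed by at most the number of $\gamma_i$ through $v$, which is at most $2k$; and $\ell(\gamma_i')\le 2\ell(\gamma_i)$. As $G'$ is $3$-regular, the average-length requirement is automatic (\Cref{rem:bounded_incidence}). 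So $G'$ is short with parameters depending only on $k,L$, and $|V(G')|$ is a constant multiple of $|V(G)|$.

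To make the construction injective I would plant a recognizable gadget at each former vertex: replace each central edge $\{a_v,b_v\}$ by new vertices $c_v,d_v,e_v,f_v$, the edges $\{a_v,c_v\}$ and $\{b_v,d_v\}$, and a copy of $K_4$ minus an edge on $\{c_v,d_v,e_v,f_v\}$ with $\{c_v,d_v\}$ the missing edge. A degree count shows the resulting graph $\widehat G$ is $3$-regular on $6|V(G)|$ vertices. Because the subgraph of $\widehat G$ on $\{a_v\}\cup\{b_v\}$ sits inside the bipartite graph $G'$ and a single gadget edge cannot close a triangle against it, the only triangles of $\widehat G$ are the two inside each gadget; the gadgets are vertex-disjoint, so the induced copies of $K_4$-minus-an-edge in $\widehat G$ are exactly the gadgets. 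From $\widehat G$ one therefore locates the gadgets, and for each gadget identifies $c_v,d_v$ (its two vertices of degree $2$ within the copy), merges their outside-neighbours $a_v,b_v$ into a single vertex, and deletes $c_v,d_v,e_v,f_v$; the former inter-gadget edges then form exactly $G$. Thus $\widehat G$ determines $G$ up to isomorphism, so the assignment is injective on isomorphism classes (the construction depends on the chosen orientation, but since $G$ is recoverable one may fix a representative and an orientation in each class). Shortness of $\widehat G$ is checked as for $G'$: a homology basis is given by the two triangles of each gadget together with the $\gamma_i'$ rerouted through the gadgets (each gadget detour having bounded length), with edge-incidence and length bounds again depending only on $k,L$.

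I expect the reconstruction step to be the real content: the danger is that the planted gadgets might be indistinguishable from structure already present in $G$, and everything turns on the pre-gadget graph being triangle-free, which is exactly what the Eulerian-orientation splitting buys. The remaining points — the degree count for $\widehat G$, the homology-basis and incidence estimates, and the bookkeeping needed to make the map well defined on isomorphism classes — are routine.
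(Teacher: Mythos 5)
Your proof is correct, but it takes a genuinely different route from the paper's. The paper replaces each $4$-valent vertex of $G$ by a $4$-cycle (connecting half-edges to distinct vertices of the cycle) and recovers $G$ by contracting all $4$-cycles in the result; injectivity comes from a length argument — any cycle of $G$ is subdivided into a cycle of length at least $6$, so the only $4$-cycles in the new graph are the vertex gadgets. You instead split each vertex along an Eulerian orientation into two degree-$3$ endpoints of a ``central'' edge (so the resulting graph is bipartite and hence triangle-free), then replace each central edge by a $K_4$-minus-an-edge gadget; injectivity comes from triangle recognition rather than from cycle lengths. Both proofs are sound. The paper's construction is shorter and gives a $4{\times}$ blow-up in the number of vertices versus your $6{\times}$, and it needs no orientation; your construction buys a cleaner, purely local recognition criterion (the gadgets are literally the only induced $K_4$-minus-an-edge subgraphs, because the complement of the gadgets sits in a bipartite graph), whereas the paper's argument has to appeal to a global claim about the lengths of subdivided cycles. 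One small clarification you might add: bipartiteness is not needed to make the map \emph{well-defined} on isomorphism classes (fixing a representative and an orientation per class already handles that); its role, as you correctly use it, is to guarantee that every triangle of $\widehat G$ lies inside a gadget. The shortness and incidence bounds you state ($\le 2k$ cycles through a central edge, $\ell(\gamma_i')\le 2\ell(\gamma_i)$, average-length handled by \Cref{rem:bounded_incidence} since $\widehat G$ is $3$-regular) are all verified correctly.
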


\begin{proof}
Given a $4$-regular graph, replace each vertex (with its four half-edges)
with a cycle on $4$-vertices with the four half-edges shown:

\includegraphics{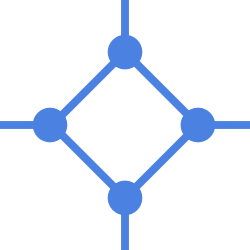}

then connect each half-edge to a half-edge of a $4$-cycle of a neighboring
vertex. This construction is injective: contracting all $4$-cycles
in the resulting graph to vertices always yields the original graph
(any cycle in the original graph is subdivided into a cycle of length
at least $6$). Further, the resulting graphs are short: each cycle
of the input graph of given length $m$ is subdivided into a cycle
of length at most $3m$, and together with the $4$-cycles these form
a homology basis. It is straightforward that each edge of the new
graph participates in a bounded number $k'$ of cycles in this basis,
depending on $k$.
\end{proof}
\begin{lem}
\label[lemma]{lem:uniformly_short_encoding}There is an integer $L_{0}\in\mathbb{N}$
such that the following holds. Let $\mathbb{F}$ be a field and let
$\mathcal{D}$ be the class of uniformly short graphs of degree $3$
over $\mathbb{F}$ in which there is a cycle basis consisting of cycles
of length $\le L_{0}$. For any class $\mathcal{C}$ of short graphs
over $\mathbb{F}$ there is a constant $c$ and a function $S:\mathcal{C}\rightarrow\mathcal{D}$, injective on isomorphism classes,
such that every $G\in\mathcal{C}$ with $n$
vertices is mapped to a graph $S(G)$ with at most $cn$ vertices.
\end{lem}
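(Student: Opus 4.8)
The plan is to construct $S$ as a composition of three maps already built in the excerpt, together with one elementary gadget that converts average-length bounds into uniform-length bounds and high degrees into degree $3$. First I would observe that by \Cref{rem:bounded_incidence} and the discussion of short classes, a class $\mathcal{C}$ of short graphs over $\mathbb{F}$ with parameters $d,k,L$ can be replaced (up to a constant factor on the number of vertices, and injectively on isomorphism types) by a class of short graphs with no vertices of degree $1$ or $2$: vertices of degree $2$ are suppressed by smoothing, and vertices of degree $1$ together with their edges are deleted, since neither affects $H_1$; to keep the map injective one can attach a small distinguishing gadget recording the suppressed data, exactly in the spirit of the coloring arguments. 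After this normalization condition (3) in the definition of short becomes redundant (it holds with $L=2k$), so without loss of generality we are working with a genuinely \emph{uniformly} short class with a uniform cycle length bound depending only on $d,k,L$.

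Next I would bound the degree. Starting from a uniformly short class of bounded degree $d$, I would apply a ``splitting'' construction generalizing \Cref{lem:4_to_3}: replace each vertex $v$ of degree $\deg(v)\le d$ by a cycle on $\deg(v)$ vertices (each new vertex carrying one of the original half-edges), and reconnect. As in the proof of \Cref{lem:4_to_3} this is injective (contract the short cycles to recover the original graph, using that every original cycle is subdivided into a strictly longer one so the inserted cycles are recognizable), multiplies the vertex count by at most $d$, and sends a uniformly short class to a uniformly short class of $3$-regular graphs with new parameters depending only on $d,k,L$: each original basis cycle of length $\le L$ becomes a cycle of length $\le dL$, and together with the inserted $\deg(v)$-cycles (each of length $\le d$) these span $H_1$ of the new graph. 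So we may now assume $\mathcal{C}$ is a class of uniformly short $3$-regular graphs with a cycle-length bound $L_1$ depending only on $d,k,L$.

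The final step is to make the length bound \emph{universal}, i.e. equal to a single constant $L_0$ independent of $L_1$. Here I would subdivide: given a uniformly short $3$-regular graph with cycle-length bound $L_1$, subdivide each edge into a path of length $N$ for a suitable fixed $N$; this multiplies the vertex count by a constant, keeps the graph $3$-regular (after smoothing, or by leaving the degree-$2$ vertices — but to stay in the class I would instead replace each edge by a small fixed $3$-regular gadget so that no degree-$2$ vertices appear), and is clearly injective. The point is that after subdivision, a short cycle basis can be chosen in which every basis cycle has length in a bounded \emph{ratio}, but to get an absolute bound I would instead use a ``telescope''-type trick: rather than carrying long basis cycles directly, one records them as sums of boundaries of small fixed pieces glued along the subdivided edges, each of bounded length $L_0$. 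Concretely, replace each edge by the $1$-skeleton of a long thin triangulated strip whose $2$-cell boundaries (all of bounded length) span the cycle space of the strip; a long basis cycle of the original graph then becomes homologous to a sum of these bounded-length cell boundaries plus one ``short'' cycle that threads the strips. This is exactly the kind of construction deferred to \Cref{sec:telescopes}, and I would either invoke a lemma from there or carry out the analogous combinatorial argument.

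The main obstacle is the last step: arranging that the universal constant $L_0$ truly does not depend on $L_1$ (hence not on the original parameters $d,k,L$) while keeping the map injective on isomorphism classes and the vertex blow-up only a constant factor. The degree-reduction and the normalization steps are routine adaptations of \Cref{lem:4_to_3} and \Cref{rem:bounded_incidence}; injectivity throughout is maintained by the same ``attach a recognizable gadget'' discipline used elsewhere in the paper, and compositions of the three maps give the required single map $S:\mathcal{C}\to\mathcal{D}$ with $|S(G)^{(0)}|\le c\,|G^{(0)}|$. I would present the proof as: (i) reduce to no degree $1,2$ vertices, hence to uniformly short; (ii) reduce to $3$-regular via the splitting gadget; (iii) reduce the length bound to the universal $L_0$ via the telescope/strip construction, invoking \Cref{sec:telescopes}; and note at each stage that the constants and injectivity are preserved.
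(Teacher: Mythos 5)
Your step (iii) is where the argument breaks down, and you have correctly sensed this yourself. Subdividing edges, or replacing them by long thin triangulated strips, does not shorten cycle representatives: a basis cycle of $G$ of length $\ell$ still has to ``go around'' the same loop in the new graph, so the classes inherited from $H_1(G)$ still require representatives of length comparable to $\ell$. The strip/ladder squares only span the \emph{new} part of the cycle space. The version of your idea that actually works is filling the cycles $\gamma_i$ themselves with triangulated disks, which is precisely \Cref{prop:graphs_to_2_complexes}: in the resulting $1$-skeleton $\Delta^{\le 1}$ every basis cycle is homologous to a sum of triangle boundaries of length $3$. But $\Delta^{\le 1}$ has degree about $dk$ and each edge lies in about that many basis triangles, so both parameters are still $(d,k,L)$-dependent; applying a \Cref{lem:4_to_3}-style vertex-splitting gadget to force $3$-regularity then blows cycle lengths back up by a factor proportional to that degree. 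Staying inside the category of graphs and $2$-complexes, the length bound and the degree/incidence bound keep trading off against each other and never become universal.

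The paper escapes the loop by routing through $3$-manifolds. One pass of the composition (fill cycles: \Cref{prop:graphs_to_2_complexes}) $\to$ (thicken to a triangulated $3$-manifold: \Cref{thm:complexes_to_manifolds}) $\to$ (take the dual graph of the second barycentric subdivision) outputs a $4$-regular graph whose natural cycle basis --- the dual $2$-cells, i.e.\ links of edges of the triangulation --- has each dual-graph edge lying in \emph{exactly three} basis cycles (\Cref{prop:sphere_duals_short}). So after one pass the parameters $(d,k)$ are reset to the universal pair $(4,3)$, even though the length bound is still $\mathcal{C}$-dependent. A \emph{second} pass then produces a length bound $L''$ that, by \Cref{lem:uniform_shortness_of_output} together with the explicit degree formula in \Cref{prop:graphs_to_2_complexes}, depends only on $(4,3)$, hence is a universal constant; then \Cref{lem:4_to_3} converts $4$-regular to $3$-regular. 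Injectivity is what makes the manifold detour worthwhile: the dual graph of $b(b(\Sigma))$ determines the triangulated $3$-manifold $\Sigma$, whereas the $1$-skeleton of a $2$-complex does not determine the $2$-complex, so the ``recognizable gadget'' bookkeeping you propose would have to carry substantial extra information and is not a routine adaptation.
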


The point is that while $c$ depends on $\mathcal{C}$, $L_{0}$ doesn't.
\begin{proof}
Observe that if $\Sigma$ is a triangulated $3$-manifold then the
dual graph of $b(b(\Sigma))$ uniquely determines the simplicial isomorphism
type of $\Sigma$ (alternatively one can avoid using this observation
by an appropriate use of \Cref{prop:dual_graph_encoding}). We perform
the following construction twice: given a short class $\mathcal{D}$
over $\mathbb{F}$, first use \Cref{prop:graphs_to_2_complexes} to
fill the cycles and obtain a class of $2$-complexes. Then apply \Cref{thm:complexes_to_manifolds}
to obtain triangulated $3$-manifolds, and finally take the second
barycentric subdivision of each of these and return the dual graphs.
This construction is injective on $\mathcal{D}$ (because each step
separately is injective) and it increases the number of vertices in
the input graphs by only a constant factor, which depends on $\mathcal{D}$.
The result is a short class $\mathcal{D}'$ of $4$-regular graphs. 

Apply this first to $\mathcal{D}=\mathcal{C}$, and then again to
the resulting class $\mathcal{D}'$, to obtain some short class $\mathcal{D}''$.
By \Cref{lem:uniform_shortness_of_output}, the output class $\mathcal{D}''$
is uniformly short, with cycle length bound $L''$ depending on the
maximal vertex degrees in the simplicial $2$-complexes that result
from filling in the graphs of $\mathcal{D}'$. By the proof of \Cref{prop:graphs_to_2_complexes},
these vertex degrees depend only on the class $\mathcal{D}'$: specifically,
if each $G\in\mathcal{D}'$ has a homology basis $\left\{ \gamma_{i}\right\} _{i}$
such that each edge of $G$ participates in at most $k$ of the cycles,
and the vertex degrees in each $G\in\mathcal{D}$ are bounded by $d$,
then the vertex degrees of the $2$-complexes are at most $d(1+4k)$.
Since $\mathcal{D}'$ consists of dual graphs of triangulated $3$-spheres,
we can take $d=4$ and $k=3$ (each edge of the dual graph participates
in precisely three dual $2$-faces, see \Cref{prop:sphere_duals_short}).
Hence $L''$ is a universal constant.

Finally, the class $\mathcal{D}''$ is $4$-regular. Applying \Cref{lem:4_to_3}
injects it into a uniformly short $3$-regular class (with constant
cycle bound the desired $L_{0}$) and finishes the proof.
\end{proof}
\begin{cor}
[\Cref{thm:triangulations_vs_short_graphs}]There exists $L_{0}\in\mathbb{N}$
such that for all $L\ge L_{0}$ the following holds. Let $\mathbb{F}$
be a field and $d\ge3$ an integer. Let $s_{d,\mathbb{F}}\left(N\right)$
be the number of isomorphism types of triangulated combinatorial $d$-manifolds
$M$ with $H_{1}\left(M;\mathbb{F}\right)=0$ and with at most $N$
$d$-simplices. Let $t_{L,\mathbb{F}}\left(N\right)$ be the number
of connected $3$-regular graphs $G$ with at most $N$ vertices such
that cycles of length $\le L$ generate $H_{1}\left(G;\mathbb{F}\right)$.
Then there exists $c,c'>0$ such that
\[
s_{d,\mathbb{F}}(\frac{1}{c}N)\le t_{L,\mathbb{F}}(N)\le s_{d,\mathbb{F}}(cN)
\]
for all $N\in\mathbb{N}$.
\end{cor}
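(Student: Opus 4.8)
The plan is to prove the two inequalities separately, using the constructions of the preceding sections to pass back and forth between triangulated combinatorial $d$-manifolds with vanishing first $\mathbb{F}$-homology and $3$-regular graphs with short cycle bases over $\mathbb{F}$. For the upper bound $t_{L,\mathbb{F}}(N)\le s_{d,\mathbb{F}}(cN)$, I would start with a connected $3$-regular graph $G$ on at most $N$ vertices whose cycles of length $\le L$ generate $H_1(G;\mathbb{F})$; such graphs form a (uniformly) short class over $\mathbb{F}$ with parameters depending only on $L$ (condition (2) is automatic for bounded-degree graphs by \Cref{rem:bounded_incidence}). Apply \Cref{prop:graphs_to_2_complexes} to obtain a $2$-complex $\Delta$ with $H_1(\Delta;\mathbb{F})=H_2(\Delta;\mathbb{F})=0$ and bounded vertex degrees, with $|\Delta^{(0)}|\le d'\cdot n$, and then \Cref{thm:complexes_to_manifolds} (in dimension $d$) to obtain a triangulated $d$-manifold $F(\Delta)$ with at most $c_1 n$ facets. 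By part (2) of \Cref{thm:complexes_to_manifolds}, since $\Delta$ is nulhomologous over $\mathbb{F}$, $F(\Delta)$ is an $\mathbb{F}$-homology sphere, so $H_1(F(\Delta);\mathbb{F})=0$. However $F(\Delta)$ need not be a \emph{combinatorial} manifold in general — this is the one real subtlety, and I would handle it by observing that $F(\Delta)=\mathrm{Enc}_{d,3}(\partial M(\Delta),c)$ and $\partial M(\Delta)$ is PL (it is the boundary of a PL manifold), hence combinatorial; the encoding step of \Cref{thm:color_encoding} replaces each facet by a complex in $\mathcal{S}_d$, which is again combinatorial, so $F(\Delta)$ is a combinatorial $d$-manifold. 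Since $F$ is injective on isomorphism classes (part (4)), we get $t_{L,\mathbb{F}}(N)\le s_{d,\mathbb{F}}(c_1 N)$ once $L\ge L_0$.

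For the lower bound $s_{d,\mathbb{F}}(\tfrac1c N)\le t_{L,\mathbb{F}}(N)$, I would go the other way. Given a triangulated combinatorial $d$-manifold $M$ with $H_1(M;\mathbb{F})=0$ and at most $\tfrac1c N$ facets, pass to its dual graph $D$, which is $(d+1)$-regular. By \Cref{prop:sphere_duals_short} (applied with $R=\mathbb{F}$; here we only need $H_1=0$, and the argument there uses only that the relevant cohomology/homology groups of $M$ vanish, so it goes through verbatim for an $\mathbb{F}$-homology sphere — in fact for any $M$ with $H_1(M;\mathbb{F})=0$ and $M$ combinatorial, since then the dual cell complex is homeomorphic to $M$), the class of such dual graphs is short over $\mathbb{F}$ with parameters depending only on $d$. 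Now invoke \Cref{lem:uniformly_short_encoding}: there is a universal $L_0$ and an injective-on-isomorphism-classes map $S$ from this short class into the class $\mathcal{D}$ of uniformly short $3$-regular graphs with a cycle basis of cycles of length $\le L_0$, increasing the vertex count by only a constant factor $c_2$ (depending on $d$ through the short class, but with $L_0$ universal). A graph in $\mathcal{D}$ has at most $c_2\cdot(\text{number of facets of }M)\le c_2\cdot\tfrac1c N\le N$ vertices (choosing $c\ge c_2$), and its cycles of length $\le L_0\le L$ generate $H_1$ over $\mathbb{F}$, so it is counted by $t_{L,\mathbb{F}}(N)$. Reconstructing $D$ from $S(D)$ (by injectivity of $S$) and then $M$ from $D$ — which is possible because $M$ is combinatorial, or by using the coloring of \Cref{prop:dual_graph_encoding} folded into the encoding — shows the composite is injective on isomorphism classes of such $M$, giving $s_{d,\mathbb{F}}(\tfrac1c N)\le t_{L,\mathbb{F}}(N)$.

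Combining, with $c=\max(c_1,c_2,\ldots)$ and $c'$ absorbed similarly, we obtain $s_{d,\mathbb{F}}(\tfrac1c N)\le t_{L,\mathbb{F}}(N)\le s_{d,\mathbb{F}}(cN)$ for all $N$ and all $L\ge L_0$.

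The step I expect to be the main obstacle is bookkeeping the reconstruction of a combinatorial $d$-manifold $M$ from its $3$-regular short graph after the chain of constructions in \Cref{lem:uniformly_short_encoding}: the dual graph of a general triangulated $d$-manifold does \emph{not} determine it (as the remark before \Cref{prop:dual_graph_encoding} notes for surfaces), so the injectivity has to be traced carefully through the facet-coloring encoding of \Cref{sec:encoding_facet_colorings} and the dual-graph coloring of \Cref{prop:dual_graph_encoding}, checking at each stage that the extra colors can be absorbed into the (uncolored) graph at the cost of only a constant blow-up in size. The other point requiring a line of care is confirming that \emph{combinatoriality} is preserved at each step — but since every construction used (handle thickening and triangulation in \Cref{thm:handle_construction}, the encodings, barycentric subdivisions) stays in the combinatorial category, and $\partial M(\Delta)$ is a combinatorial manifold by construction, this is routine.
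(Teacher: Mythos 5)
Your high-level strategy is the same as the paper's: for the upper bound you invoke \Cref{prop:graphs_to_2_complexes} and then \Cref{thm:complexes_to_manifolds}, and for the lower bound you pass to the dual graph, apply \Cref{prop:sphere_duals_short}, and then \Cref{lem:uniformly_short_encoding}. The upper bound is essentially the paper's proof (and your aside on combinatoriality is correct: everything stays in the PL/combinatorial category).

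There is a genuine gap in your lower bound. You write that reconstructing $M$ from its dual graph $D$ ``is possible because $M$ is combinatorial''; this is false. The dual graph alone does not determine the simplicial isomorphism type of a triangulated manifold even in the combinatorial case --- that is precisely why \Cref{prop:dual_graph_encoding} exists, and the Ceballos--Doolittle examples (which are triangulations of surfaces, hence automatically combinatorial) already witness the failure. Your alternative, ``using the coloring of \Cref{prop:dual_graph_encoding} folded into the encoding,'' is the right idea, but you flag the mechanism as your main open obstacle rather than supplying it. The paper's resolution is a concrete and short trick: starting from the colored dual graph $(D,c)$ with $c:D^{(0)}\to[k]$, build an uncolored bounded-degree graph $D'$ by attaching to each vertex $v$ exactly $c(v)$ new pendant degree-$1$ vertices. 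This is injective ($D$ is recovered by deleting degree-$1$ vertices, and $c(v)$ is read off as the number of leaves at $v$), increases the vertex count by only a constant factor, keeps the degree bounded by $d+1+k$, and preserves shortness because the pendant vertices lie on no cycle (so the cycle family of \Cref{prop:sphere_duals_short} is untouched). One then feeds $D'$ --- not $D$ --- into \Cref{lem:uniformly_short_encoding}. Without this step, you have an injection into \emph{colored} $3$-regular graphs, not into the plain graphs counted by $t_{L,\mathbb{F}}$, and the claimed inequality does not follow.
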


\begin{proof}
The inequality $t_{L,\mathbb{F}}(N)\le s_{d,\mathbb{F}}(cN)$ follows
from \Cref{prop:graphs_to_2_complexes} and \Cref{thm:complexes_to_manifolds}:
the uniformly short class $\mathcal{C}$ consisting of $3$-regular
graphs with $\mathbb{F}$-homology bases having all cycles of length
at most $L$ can be injectively encoded by combinatorially triangulated
$\mathbb{F}$-homology $d$-spheres in such a way that the encoding
of a graph $G\in\mathcal{C}$ on $n$ vertices has at most $cn$ facets
for some fixed $c>0$.

For the inequality $s_{d,\mathbb{F}}(\frac{1}{c}N)\le t_{L,\mathbb{F}}(N)$
we encode triangulated $d$-dimensional $\mathbb{F}$-homology spheres
in $3$-regular short graphs as follows. Given a triangulated homology
sphere $\Sigma$, first pass to the dual graph, colored using \Cref{prop:dual_graph_encoding}
(so it can be used to reconstruct $\Sigma$). This is a $\left(d+1\right)$-regular
colored graph $\left(D,c\right)$, where $c$ uses $k$ colors, with
$k$ depending only on $d$. Encode it into a bounded-degree graph
$D^{\prime}$ as follows: if $c(v)=t\in\left[k\right]$, add $t$
isolated vertices to the graph and connect them to $v$. This construction
yields bounded degree graphs (with degree at most $d+1+k$,) increases
the number of vertices by a constant depending only on $d$, and is
injective: $D$ can be reconstructed from $D^{\prime}$ by erasing
all degree-$1$ vertices, and the color $c(v)$ of a given vertex
is the number of degree-$1$ vertices connected to $v$. Now by \Cref{prop:sphere_duals_short}
and \Cref{lem:uniformly_short_encoding} we can encode these graphs
injectively into $3$-regular graphs with first homology generated
by cycles of length at most $L_{0}$.
\end{proof}

\section{Sphere triangulations with expanding duals}

\label[section]{sec:telescopes} We construct an infinite sequence
of bounded-degree collapsible $2$-complexes such that their $1$-skeleta
are an expander family. The idea is to construct the mapping telescopes
of sequences of $2$-covers obtained from \cite{Bilu_Linial}.

\subsection{Collapsibility and expansion of telescopes}
\begin{defn}
Let $\left\{ X_{k}\right\} _{k=1}^{n}$ be a sequence of cell complexes,
and $\left\{ f_{k}\right\} _{k=1}^{n-1}$ a sequence of maps $f_{k}:X_{k+1}\rightarrow X_{k}$.
The \emph{mapping telescope} of the sequence is 
\[
\frac{\left(X_{1}\times\{1\}\right)\sqcup\bigsqcup_{k=2}^{n}X_{k}\times I}{\sim},
\]
where the relation $\sim$ identifies $\left(x,0\right)$ with $\left(f_{k}(x),1\right)$
whenever $x\in X_{k+1}$.
\end{defn}

In other words, the mapping telescope is given by gluing the mapping
cylinders of the maps $\left\{ f_{k}\right\} _{k}$ together, each
on top of the previous one.
\begin{defn}
A \emph{hierarchical sequence of graphs} $\mathcal{H}$ is a sequence
of graphs $\left\{ H_{n}\right\} _{n\in\mathbb{N}}$ together with
functions $f_{n}:V(H_{n+1})\rightarrow V(H_{n})$ for each $n\in\mathbb{N}$,
satisfying that for each pair $\{v,w\}\in E(H_{n+1})$, we have either
$f_{n}(v)=f_{n}(w)$ or $\left\{ f_{n}(v),f_{n}(w)\right\} \in E\left(H_{n}\right)$. 

Given a hierarchical sequence $\mathcal{H}=\left(\left\{ H_{n}\right\} _{n\in\mathbb{N}},\left\{ f_{n}\right\} _{n\in\mathbb{N}}\right)$,
we extend each $f_{n}$ to a function $H_{n+1}\rightarrow H_{n}$
by mapping each edge $\left\{ v,w\right\} $ of $H_{n+1}$ linearly
onto $\left\{ f(v),f(w)\right\} $ (which may be a single vertex).
The $N$-th mapping telescope $T_{N}\left(\mathcal{H}\right)$ of
the sequence is then the mapping telescope of $\left\{ f_{n}\right\} _{n=1}^{N}$.
\end{defn}

Observe that $T_{N}\left(\mathcal{H}\right)$ is a $2$-dimensional
CW complex in which all $2$-cells are squares or triangles.
\begin{claim}
\label[claim]{claim:telescope_collapse}Let $\mathcal{H}=\left(\left\{ H_{n}\right\} _{n\in\mathbb{N}}\left\{ f_{n}\right\} _{n\in\mathbb{N}}\right)$
be a hierarchical sequence of graphs. Then $T_{N}\left(\mathcal{H}\right)$
collapses onto $H_{1}$ for all $N\in\mathbb{N}$. In particular,
if $H_{1}$ is a single vertex, then each $T_{N}\left(\mathcal{H}\right)$
is collapsible.
\end{claim}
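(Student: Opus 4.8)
The plan is to collapse the telescope $T_N(\mathcal{H})$ one mapping cylinder at a time, peeling off the cylinders from the top ($k=N$) down to $k=2$, so that at each stage we remove the cylinder of $f_k$ together with the copy $H_{k+1}\times\{0\}$ of its domain (which has already been identified with a subset of the previous cylinder). Concretely, write $T_N = \bigcup_{k=2}^{N} \mathrm{Cyl}(f_k)$, where $\mathrm{Cyl}(f_k)$ is the mapping cylinder $\left( H_{k+1}\times I \right) \cup_{f_k} H_k$, with $H_{k+1}\times\{1\}$ glued to $H_k$ via $f_k$. The top cylinder $\mathrm{Cyl}(f_N)$ meets the rest of the telescope only along $H_N = H_N \times \{1\}$ (its bottom, which is $H_N \times \{0\}$ in the coordinates of $\mathrm{Cyl}(f_{N-1})$). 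So it suffices to show $\mathrm{Cyl}(f_N) \searrow H_N$ in a way that fixes $H_N$ pointwise; then inductively the telescope collapses onto $H_2$, and one more application gives the collapse onto $H_1$. The single vertex case follows since a collapsible complex onto a point is collapsible.

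The core step is therefore: for a simplicial map $f : H' \to H$ between graphs, built cellularly as in the definition (so that each edge $\{v,w\}$ of $H'$ is mapped linearly onto $\{f(v),f(w)\}$ or collapsed to the vertex $f(v)=f(w)$), the mapping cylinder $\mathrm{Cyl}(f)$ collapses onto $H$ rel $H$. First I would collapse away all the $2$-cells coming from edges $\{v,w\}$ of $H'$ with $f(v) \neq f(w)$: each such $2$-cell is a square $\{v,w\}\times I$, whose face $\{v,w\}\times\{0\}$ is free as long as we have not yet touched it, and collapsing across the square removes this square and its top edge $\{v,w\}\times\{0\}$ but can leave the side edges and the image edge $\{f(v),f(w)\}$ in $H$; performing these collapses (in any order) removes all such squares together with the interiors of the edges $\{v,w\}\times\{0\}$. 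Then I would collapse the triangular $2$-cells arising from edges with $f(v)=f(w)$: each such triangle has the edge $\{v,w\}\times\{0\}$ as a free face (it lies in no other cell once the squares are gone), so collapsing across it removes the triangle and that edge. At this point all $2$-cells of $\mathrm{Cyl}(f)$ are gone, and we are left with $H$ together with a collection of ``vertical'' segments $\{v\}\times I$ for $v \in V(H')$, each attached to $H$ only at its bottom endpoint $(f(v),1)\in H$. Each such segment is an interval with a free endpoint $(v,0)$, so it collapses onto $(f(v),1)$; doing this for all $v$ leaves precisely $H$.

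The main obstacle is bookkeeping: one must verify that the required faces really are \emph{free} at the moment we collapse them, i.e. that they lie in a unique higher cell of the \emph{current} complex. This requires checking that the boundary of each square $\{v,w\}\times I$ is, within $T_N$, a union of lower cells, that the top edge $\{v,w\}\times\{0\}$ is not also a face of $H_{k+1}\times\{1\}$ or of another square, and that after removing the squares the triangles' top edges become free; all of this is immediate from the cellulation of the mapping cylinder but should be stated carefully, perhaps by first fixing an honest regular CW (or simplicial, after subdivision) structure in which $H$, $H'\times\{0\}$, and $H'\times\{1\}$ are subcomplexes. A secondary, minor point is handling degenerate edges of $H'$ (those collapsed by $f$) uniformly with the non-degenerate ones, and making sure the collapse of $\mathrm{Cyl}(f_k)$ does not disturb $H_k$, which it does not since every cell we remove has the free face in the interior of $H_{k+1}\times[0,1)$. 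Once this is in place, the inductive peeling argument and the final collapse $H_2 \searrow \cdots \searrow H_1$ are routine.
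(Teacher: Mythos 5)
Your proof is correct and takes essentially the same approach as the paper's: peel off the top cylinder of $T_N(\mathcal{H})$ by first collapsing each $2$-cell from its free edge at the free end, then the remaining vertical $1$-cells, leaving $T_{N-1}(\mathcal{H})$, and recurse. The only differences are presentational (you phrase the inductive step as a lemma that a mapping cylinder collapses onto its target rel target, while the paper works with $T_{N-1}(\mathcal{H})$ directly via \Cref{lem:extending_collapse}-style reasoning), together with a harmless off-by-one in $T_N = \bigcup_{k=2}^{N}\mathrm{Cyl}(f_k)$, which should start at $k=1$ so as to include $H_1$.
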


\begin{proof}
It suffices to show that $T_{1}(\mathcal{H})$ collapses onto $H_{1}$, and that
$T_{N}\left(\mathcal{H}\right)$ collapses onto $T_{N-1}\left(\mathcal{H}\right)$
for each $N\ge2$. If $N\ge2$, note that
\[
T_{N}\left(\mathcal{H}\right)\simeq\left[T_{N-1}\left(\mathcal{H}\right)\sqcup\left(H_{N+1}\times I\right)\right]/\sim,
\]
where $\sim$ glues $H_{N+1}\times\{0\}$ onto $T_{N-1}\left(\mathcal{H}\right)$.
The $2$-cells of $T_{N}\left(\mathcal{H}\right)$ which are not contained
in $T_{N-1}\left(\mathcal{H}\right)$ have boundary $\left\{ v,w,f_{N}(v),f_{N}(w)\right\} $,
and in each of these $\left\{ v,w\right\} \in E\left(H_{N+1}\right)$
is a free face. Hence all such $2$-cells can be collapsed. We are
then left with the subcomplex
\[
\left[T_{N-1}\left(\mathcal{H}\right)\sqcup\left(V\left(H_{N+1}\right)\times I\right)\right]/\sim,
\]
and for each $1$-cell of the form $v\times I$ (where $v\in V\left(H_{N+1}\right)$)
the vertex $v\times\{1\}$ is a free face. Hence these can be collapsed
as well, leaving exactly $T_{N-1}\left(\mathcal{H}\right)$. 

In exactly the same way, $T_{1}\left(\mathcal{H}\right)=\left[\left(H_{1}\times\left\{ 1\right\} \right)\sqcup\left(H_{2}\times I\right)\right]/\sim$
collapses onto $H_{1}\times\left\{ 1\right\} $ by first collapsing
all $2$-cells, and then all $1$-cells of the form $v\times I$ for
$v\in V\left(H_{2}\right)$ (from the free face $v\times\{1\}$).
\end{proof}
\begin{prop}
\label[proposition]{prop:telescope_expansion} Let $\mathcal{H}=\left(\left\{ H_{n}\right\} _{n\in\mathbb{N}},\left\{ f_{n}\right\} _{n\in\mathbb{N}}\right)$
be a hierarchical sequence of graphs. Suppose $\left\{ H_{n}\right\} _{n\in\mathbb{N}}$
forms a bounded degree expander family, and that there exists a constant
$c$ such that $2\le\left|f_{n}^{-1}(v)\cap V\left(H_{n+1}\right)\right|\le c$
for all $n\in\mathbb{N}$ and $v\in V\left(H_{n}\right)$. Then the
sequence of graphs $\left\{ T_{N}\left(\mathcal{H}\right)^{\le1}\right\} _{N\in\mathbb{N}}$
is a bounded degree expander family.
\end{prop}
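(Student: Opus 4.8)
The plan is to establish the expansion of $\{T_N(\mathcal{H})^{\le 1}\}_N$ by a direct Cheeger-constant estimate, exploiting the product-like structure of the telescope: combinatorially, $V(T_N(\mathcal{H})^{\le 1})$ is the disjoint union $\bigsqcup_{n=1}^{N+1} V(H_n)$ (the copy of $H_n$ at the ``level'' where it is the base of a mapping cylinder, together with the intermediate copies), and the edges are of two types: the ``horizontal'' edges within each $V(H_n)$ (copies of edges of $H_n$), and the ``vertical'' edges joining $v\in V(H_{n+1})$ to $f_n(v)\in V(H_n)$. The two hypotheses — that $\{H_n\}$ is a bounded-degree $\alpha$-expander family, and that each fiber $f_n^{-1}(v)$ has size between $2$ and $c$ — are exactly what is needed to control these two edge types. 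Note that the degree bound is immediate: a vertex in the $H_n$-layer has at most $\deg_{H_n}(v)$ horizontal edges, at most $c$ vertical edges going up to $H_{n+1}$, and at most $1$ vertical edge going down to $H_{n-1}$, all uniformly bounded.

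First I would set up notation: write $L_n$ for the $n$-th vertex layer (a copy of $V(H_n)$), so $V(T_N) = \bigsqcup_{n=1}^{N+1} L_n$, and let $A \subseteq V(T_N)$ be a subset with $|A| \le \frac12 |V(T_N)|$; set $A_n = A \cap L_n$ and $\overline{A}_n = L_n \setminus A_n$. The key quantitative input is that $|L_{n+1}| \ge 2|L_n|$ (each fiber has size $\ge 2$), so the layer sizes grow at least geometrically; hence $|L_{N+1}| \ge \frac12 |V(T_N)|$ and, more generally, $\sum_{m \le n} |L_m| \le |L_{n+1}|$, i.e.\ the ``top'' layer dominates. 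I would then split into two regimes according to how $A$ distributes across layers. If a definite fraction of $A$ lies in layers $n$ where $A_n$ occupies at most, say, $\frac23|L_n|$, then within each such layer the horizontal $\alpha$-expansion of $H_n$ gives $|E(A_n, \overline{A}_n)| \ge \alpha' |A_n|$ (applying the Cheeger inequality to whichever of $A_n, \overline{A}_n$ is smaller, and absorbing a constant factor), and summing over these layers yields $|E(A, A^c)| = \Omega(|A|)$. In the complementary regime, most of $A$ lies in layers that are ``almost full'' (at least $\frac23$-occupied); here I would use the vertical edges: if layer $n$ is almost full and layer $n-1$ is not almost full (such a pair exists because layer $1$ has only one vertex and $A \ne V(T_N)$, or one can argue the topmost almost-full layer has a non-almost-full neighbor — this needs a small bookkeeping argument), then a constant fraction of the vertical edges between $L_{n-1}$ and $L_n$ run between $A$ and $A^c$, since each vertex of $L_{n-1}$ has between $2$ and $c$ vertical neighbors in $L_n$. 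Because the layer sizes grow geometrically, the single ``boundary layer'' of largest index already carries $\Omega(|A|)$ vertices, giving $\Omega(|A|)$ crossing edges.

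The main obstacle I expect is the bookkeeping in the second regime: one must argue that the ``almost-full mass'' of $A$ cannot hide entirely in the top layers without paying a boundary cost, and here the asymmetry between upward fibers (size $\le c$) and the single downward edge matters — a vertex high up in $A$ forces its unique image downward, and controlling the interaction requires care about whether we are looking at $E(A_n,\overline A_n)$ horizontally or at the vertical cut. A clean way to organize this is: let $n^*$ be the largest index with $A_{n^*}$ not almost full; if $n^* = N+1$ we are (essentially) in the first regime since the top layer is half of everything; otherwise all layers above $n^*$ are almost full, and the vertical cut between $L_{n^*}$ and $L_{n^*+1}$ already has size $\Omega(|L_{n^*+1}|) = \Omega(|V(T_N)|) = \Omega(|A|)$, using that $|\overline{A}_{n^*}| \ge \frac13|L_{n^*}|$ contributes downward edges from the full layer above, or symmetrically that the full layer $L_{n^*+1}$ sends $\Omega(|L_{n^*}|)$ edges down into $\overline A_{n^*}$. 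I would also remark (as in the proof of \Cref{fact:expanders_via_quasi_isometry}) that it suffices to produce, for each $\varepsilon$, a single bad cut leading to a contradiction, but here a direct lower bound on the Cheeger constant is cleaner. Finally, one checks the bounds obtained depend only on $\alpha$, the degree bound for $\{H_n\}$, and $c$, so the resulting $\alpha'$ is uniform in $N$.
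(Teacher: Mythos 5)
Your overall strategy — decompose $V(T_N)$ into layers, split $A$ into layers where $A_n$ is sparse in $L_n$ versus layers where it is dense, handle the sparse part by horizontal expansion in $H_n$ and the dense part by vertical edges, and use the geometric growth $|L_{n+1}|\ge 2|L_n|$ to see that the top dominates — is the same decomposition the paper uses. The sparse regime and the degree-bound remark are fine. But your dense regime has a genuine gap.

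The gap is in the claim that the vertical cut between $L_{n^*}$ and $L_{n^*+1}$ has size $\Omega(|L_{n^*+1}|)$ (or even $\Omega(|L_{n^*}|)$). That cut can be \emph{zero}. Take $\overline A_{n^*+1}:=f_{n^*}^{-1}(\overline A_{n^*})$: then every vertical edge incident to $\overline A_{n^*}$ has its other endpoint in $\overline A_{n^*+1}$, and every vertical edge incident to $\overline A_{n^*+1}$ has its other endpoint in $\overline A_{n^*}$, so the vertical crossing at this interface is exactly $0$. This is consistent with your thresholds: with $|\overline A_{n^*}| = (\tfrac13+\varepsilon)|L_{n^*}|$ (so layer $n^*$ is not almost full) and fibers of size at most $c$, one can have $|\overline A_{n^*+1}| \le (\tfrac23+2\varepsilon)|L_{n^*}| \le \tfrac13|L_{n^*+1}|$ whenever $|L_{n^*+1}| \ge (2+6\varepsilon)|L_{n^*}|$, which is allowed as soon as $c>2$ (the proposition permits any $c$, and the application to $2$-sheeted covers even has fibers of size exactly $2$ only as a lower bound). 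So ``a constant fraction of the vertical edges cross'' is false without further assumptions. The naive estimate crossing $\ge 2|\overline A_{n^*}| - |\overline A_{n^*+1}|$ is exactly the quantity that becomes nonpositive here.

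What is missing is a further dichotomy, and it is the heart of the paper's proof of the dense case. The paper anchors at the \emph{densest} layer $k$ (largest $n$ with the $W$-part at least $9/10$ full), observes $|W\cap V(H_k)| \ge \tfrac14|W|$, and then splits on the size of $W$'s part of the sparse layer $k+1$ above. If $|W\cap V(H_{k+1})| > |W\cap V(H_k)|$, then horizontal expansion in $H_{k+1}$ (where $W$ is sparse, so the Cheeger bound applies with room to spare) already gives $\Omega(|W|)$ crossing edges. If instead $|W\cap V(H_{k+1})| \le |W\cap V(H_k)|$, then the vertical count works: layer $k$ emits $\ge 2|W\cap V(H_k)|$ upward edges, and at most $|W\cap V(H_{k+1})| \le |W\cap V(H_k)|$ of them can be absorbed (since each vertex above has a single downward edge), leaving $\ge |W\cap V(H_k)|=\Omega(|W|)$ crossing edges. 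Your argument needs this extra case split; without it, the single vertical cut you consider can fail to see the boundary at all. (Two minor points: with a $2/3$ threshold two consecutive almost-full layers at the top give $|A|\ge\tfrac12|V(T_N)|$ only non-strictly, so the paper's $9/10$ is not an arbitrary choice; and $H_1$ is not assumed to be a single vertex in the proposition, only in the later application.)
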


\begin{proof}
For convenience we denote $T_{N}\coloneqq T_{N}\left(\mathcal{H}\right)^{\le1}$
for all $N$. Note that if the degrees of the graphs $\left\{ H_{n}\right\} _{n\in\mathbb{N}}$
are bounded by some $d$ then the degrees of the graphs $\left\{ T_{N}\right\} _{N\in\mathbb{N}}$
are bounded by $d+c+1$. 

Suppose the graphs $\left\{ H_{n}\right\} _{n\in\mathbb{N}}$ all
have edge expansion ratio at least $\alpha$. Let $N\in\mathbb{N}$
and let $W\subset V\left(T_{N}\right)$ have $|W|\le\frac{1}{2}\left|V\left(T_{N}\right)\right|$. 

Recall $V\left(T_{N}\right)=\bigsqcup_{n\le N+1}V\left(H_{n}\right)$.
Call $V\left(H_{n}\right)$ the $n$-th \emph{layer} of $V\left(T_{N}\right)$
for each $n\le N+1$, and denote the part of $W$ in $n$-th layer
by $L_{n}=W\cap V\left(H_{n}\right)$. Let
\[
S=\bigcup\left\{ L_{n}\mid\frac{\left|L_{n}\right|}{\left|V\left(H_{n}\right)\right|}<\frac{9}{10}\right\} ,
\]
\[
D=\bigcup\left\{ L_{n}\mid\frac{\left|L_{n}\right|}{\left|V\left(H_{n}\right)\right|}\ge\frac{9}{10}\right\} .
\]
Intuitively, $S$ is the union of those layers $L_{n}$ of $W$ which
are reasonably sparse within $V\left(H_{n}\right)$, while $D$ is
the union of those layers which are dense. It is clear that $\left\{ S,D\right\} $
is a partition of $W$. We split into several cases, and show $\left|E\left(W,W^{c}\right)\right|$
is at least a constant multiple of $\left|W\right|$ in each case:

\textbf{Case 1:} $\left|S\right|\ge\frac{1}{2}\left|W\right|$. Let
$n$ be such that $L_{n}\subset S$, and observe that 
\[
\frac{1}{9}\left|L_{n}\right|\le\min\left\{ \left|L_{n}\right|,\left|V\left(H_{n}\right)\setminus L_{n}\right|\right\} \le\frac{1}{2}\left|V\left(H_{n}\right)\right|.
\]
Therefore 
\[
\left|E\left(L_{n},V\left(H_{n}\right)\setminus L_{n}\right)\right|\ge\frac{1}{9}\alpha\left|L_{n}\right|.
\]
Since $\bigcup_{n\le N+1}E\left(L_{n},V\left(H_{n}\right)\setminus L_{n}\right)\subset E\left(W,W^{c}\right)$,
we have
\[
E\left(W,W^{c}\right)\ge\sum_{\left\{ n\mid L_{n}\subset S\right\} }\frac{1}{9}\alpha\left|L_{n}\right|=\frac{1}{9}\alpha\cdot\left|S\right|\ge\frac{\alpha}{18}\left|W\right|.
\]

\textbf{Case 2:} $\left|D\right|\ge\frac{1}{2}\left|W\right|$. Let
$k=\max\left\{ n\le N+1\mid L_{n}\subset D\right\} $. We split this
into two further subcases.

\textbf{Case 2.1:} $k<N+1$. Then $L_{k+1}\subset S$. We have
\[
\left|E\left(W,W^{c}\right)\right|\overset{\star}{\ge}\left|E\left(L_{k},V\left(H_{k+1}\right)\setminus L_{k+1}\right)\right|+\left|E\left(L_{k+1},V\left(H_{k+1}\right)\setminus L_{k+1}\right)\right|.
\]

Note that $\left|L_{k}\right|\ge\frac{1}{4}\left|W\right|$, because
\[
\left|L_{k}\right|\ge\frac{9}{10}\left|V\left(H_{k}\right)\right|\ge\frac{9}{10}\cdot\frac{1}{2}\left|\bigcup_{j\le k}V\left(H_{j}\right)\right|,
\]
where $D\subseteq\bigcup_{j\le k}V\left(H_{j}\right)$ contains at
least half the vertices of $\left|W\right|$. 

The idea is now that either $L_{k+1}$ is relatively large compared
to $L_{k}$, in which case an analysis similar to case 1 applies to
the summand $\left|E\left(L_{k+1},V\left(H_{k+1}\right)\setminus L_{k+1}\right)\right|$,
or that $L_{k+1}$ is small compared to $L_{k}$, in which case $\left|E\left(L_{k},V\left(H_{k+1}\right)\setminus L_{k+1}\right)\right|$
is at least a constant times $\left|L_{k}\right|$.

In detail, if $\left|L_{k+1}\right|>\left|L_{k}\right|$ then (as
in case 1) we have 
\[
\left|E\left(W,W^{c}\right)\right|\overset{\text{by \ensuremath{\star}}}{\ge}\left|E\left(L_{k+1},V\left(H_{k+1}\right)\setminus L_{k+1}\right)\right|\ge\frac{1}{9}\alpha\left|L_{k+1}\right|\ge\frac{1}{9}\alpha\cdot\left|L_{k}\right|\ge\frac{\alpha}{36}\left|W\right|
\]
On the other hand, if $\left|L_{k+1}\right|\le\left|L_{k}\right|$,
then since each vertex of $L_{k}$ is connected to at least two vertices
of $V\left(H_{k+1}\right)\supseteq L_{k+1}$ (and each vertex of $H_{k+1}$
is connected to at most one vertex of $L_{k}$,) we find
\[
\left|E\left(W,W^{c}\right)\right|\overset{\text{by \ensuremath{\star}}}{\ge}\left|E\left(L_{k},V\left(H_{k+1}\right)\setminus L_{k+1}\right)\right|\ge\left|E\left(L_{k},V\left(H_{k+1}\right)\right)\right|-\left|E\left(L_{k},L_{k+1}\right)\right|
\]
\[
\ge2\left|L_{k}\right|-\left|L_{k}\right|\ge\left|L_{k}\right|\ge\frac{1}{4}\left|W\right|.
\]

\textbf{Case 2.2: }$k=N+1$. This case is very similar to case 2.1.
First note that $L_{N}\not\subset D$, because otherwise 
\[
\left|W\right|\ge\frac{9}{10}\left|V\left(H_{N}\right)\cup V\left(H_{N+1}\right)\right|\ge\frac{9}{10}\cdot\frac{3}{4}\left|V\left(T_{N}\right)\right| > \frac{1}{2}\left|V\left(T_{N}\right)\right|.
\]
(This follows from the fact that $\left|V\left(H_{i+1}\right)\right|\ge2\left|V\left(H_{i}\right)\right|$
for all $i\le N$.) Therefore 
\[
\left|E\left(W,W^{c}\right)\right|\ge\left|E\left(L_{N},V\left(H_{N}\right)\setminus L_{N}\right)\right|+\left|E\left(L_{N+1},V\left(H_{N}\right)\setminus L_{N}\right)\right|.
\]

Observe that 
\[
\left|E\left(L_{N+1},V\left(H_{N}\right)\setminus L_{N}\right)\right|=\left|E\left(L_{N+1},V\left(H_{N}\right)\right)\right|-\left|E\left(L_{N+1},L_{N}\right)\right|
\]
\[
=\left|L_{N+1}\right|-\left|f^{-1}\left(L_{N}\right)\cap L_{N+1}\right|\ge\left|L_{N+1}\right|-c\left|L_{N}\right|.
\]
Hence if $\left|L_{N}\right|\le\frac{1}{2c}\left|L_{N+1}\right|$
we have $\left|E\left(W,W^{c}\right)\right|\ge\frac{1}{2}\left|L_{N+1}\right|$.

On the other hand, if $\left|L_{N}\right|>\frac{1}{2c}\left|L_{N+1}\right|$,
we have $\left|E\left(L_{N},V\left(H_{N}\right)\setminus L_{N}\right)\right|\ge\frac{1}{9}\alpha\left|L_{N}\right|$
by the considerations of case 1, and hence $\left|E\left(W,W^{c}\right)\right|\ge\frac{1}{9}\alpha\left|L_{N}\right|\ge\frac{\alpha}{18c}\left|L_{N+1}\right|$.

In either case, since
\[
\left|L_{N+1}\right|\ge\frac{9}{10}\left|V\left(H_{N+1}\right)\right|\ge\frac{9}{20}\left|V\left(T_{N}\right)\right|\ge\frac{9}{20}\left|W\right|,
\]
we have a lower bound on $\left|E\left(W,W^{c}\right)\right|$ of
the form $\text{constant}\cdot\left|W\right|$.
\end{proof}

\subsection{An expanding family of collapsible 2-complexes}
\begin{thm}
\label[theorem]{thm:collapsible_expanding_complexes}There exists
an infinite family $\left\{ X_{n}\right\} _{n\in\mathbb{N}}$ of finite
$2$-dimensional simplicial complexes satisfying:
\begin{enumerate}
\item Each of the complexes is collapsible to a point,
\item The $1$-skeleta $\left\{ X_{n}^{\le1}\right\} _{n}$ are a bounded-degree
expander family.
\end{enumerate}
\end{thm}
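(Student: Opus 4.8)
The plan is to build the family $\{X_n\}$ as triangulated mapping telescopes of a hierarchical sequence of graphs arising from repeated $2$-lifts, then invoke \Cref{claim:telescope_collapse} for collapsibility and \Cref{prop:telescope_expansion} for expansion.

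First I would fix a base graph $H_1$ that is a single vertex (with a self-loop, or rather: start the hierarchy one level up so that $H_1$ is a small fixed expander and note that collapsing onto $H_1$ plus one extra collapse gives collapsibility to a point — but it is cleanest to arrange $H_1$ to be a single vertex directly, since \Cref{claim:telescope_collapse} then yields collapsibility immediately). Then, using the result of Bilu--Linial \cite{Bilu_Linial}, I would inductively construct $H_{n+1}$ as a random (or explicit, by their derandomization) $2$-lift of $H_n$: each $H_{n+1}$ is a double cover of $H_n$, so the covering map gives $f_n : V(H_{n+1}) \to V(H_n)$ with $|f_n^{-1}(v)| = 2$ for every $v$, and the edge condition in the definition of a hierarchical sequence holds automatically because edges of $H_{n+1}$ project to edges of $H_n$. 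The Bilu--Linial theorem guarantees that the new eigenvalue introduced at each lift is $O(\sqrt{d \log^3 d})$, so starting from a Ramanujan or near-Ramanujan base graph of degree $d$, all the $H_n$ are $d$-regular with second eigenvalue bounded uniformly in $n$; hence $\{H_n\}_{n\in\mathbb{N}}$ is a bounded-degree expander family. This is exactly the hypothesis of \Cref{prop:telescope_expansion} with $c = 2$ (we have $2 \le |f_n^{-1}(v)| \le 2$), so $\{T_N(\mathcal{H})^{\le 1}\}_{N\in\mathbb{N}}$ is a bounded-degree expander family, and $\{T_N(\mathcal{H})\}$ is collapsible by \Cref{claim:telescope_collapse} since $H_1$ is a point.

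The remaining issue is that $T_N(\mathcal{H})$ as defined is a CW complex whose $2$-cells are squares and triangles, not a simplicial complex. I would fix this by subdividing: each square $\{v,w,f_N(v),f_N(w)\}$ in a mapping-cylinder layer gets its standard diagonal subdivision into two triangles (choosing the diagonal consistently, e.g. by a linear order on vertices, so the subdivisions match along shared edges), and any genuinely degenerate $2$-cells (where $f_n(v)=f_n(w)$, so the ``square'' is already a triangle) need no subdivision. One should check that this simplicial subdivision does not destroy collapsibility — but subdividing a collapsible complex yields a collapsible complex (collapsibility of the $2$-complex is invariant under subdivision, or one can simply re-run the collapse of \Cref{claim:telescope_collapse} at the level of the subdivided cells: the free-face arguments go through since subdividing a square into two triangles still leaves a free edge to collapse from). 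It also does not change the $1$-skeleton's quasi-isometry type by more than a bounded factor (at most one new vertex per square, adjacent to existing vertices), so \Cref{fact:expanders_via_quasi_isometry}, or a direct check, shows the subdivided $1$-skeleta still form an expander family. Finally the vertex degrees stay bounded because the degrees in $T_N(\mathcal{H})^{\le1}$ are bounded (by $d + c + 1 = d+3$, from \Cref{prop:telescope_expansion}) and the subdivision adds a bounded number of edges per vertex.

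The main obstacle I anticipate is not any single step but rather the bookkeeping needed to pass cleanly from the CW telescope to an honest simplicial complex while simultaneously preserving \emph{all three} properties (collapsibility, bounded degree, expansion of the $1$-skeleton) — in particular making the diagonal choices in the square subdivisions globally consistent and verifying that the explicit collapsing sequence survives. A secondary point requiring care is the very bottom of the hierarchy: one must ensure $H_1$ really can be taken to be a single vertex while still having $H_2, H_3, \ldots$ be expanders, which forces the first few lifts to be handled slightly by hand (e.g. let $H_2$ be a fixed small expander multigraph on $2$ vertices obtained as a $2$-lift of the one-vertex graph with enough loops, then apply Bilu--Linial from $H_2$ onward); alternatively one invokes \Cref{claim:telescope_collapse}'s ``collapses onto $H_1$'' conclusion with $H_1$ a fixed contractible graph (a tree or single vertex) and accepts that the expansion argument of \Cref{prop:telescope_expansion} only needs the tail of the sequence to expand, which it does.
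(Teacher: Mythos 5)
Your proposal is essentially identical to the paper's proof: build a hierarchical sequence of graphs from Bilu--Linial $2$-lifts, prefix it with a single vertex so that \Cref{claim:telescope_collapse} gives collapsibility, and invoke \Cref{prop:telescope_expansion} for expansion of the $1$-skeleta. One place where you are more careful than the paper: you correctly note that $T_N(\mathcal{H})$ is a CW complex with square $2$-cells, not a simplicial complex as the theorem statement requires, and you supply the (easy but necessary) diagonal subdivision step together with checks that it preserves collapsibility, bounded degree, and the quasi-isometry type of the $1$-skeleton; the paper glosses over this. Conversely, your handling of the bottom of the hierarchy is more convoluted than needed: the paper simply takes the Bilu--Linial tower $H_2, H_3, \ldots$ as-is, prefixes a single-vertex graph $H_1$, and lets $f_1 : H_2 \to H_1$ be the constant map (so $f_1$ is not itself a $2$-lift, which is fine since \Cref{prop:telescope_expansion} only needs $2 \le |f_n^{-1}(v)| \le c$ for a uniform $c$, here $c = \max(2,|V(H_2)|)$ rather than the $c=2$ you wrote). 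This avoids the loop/multigraph gymnastics you were worried about.
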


\begin{proof}
By the main result of \cite{Bilu_Linial}, if $d\in\mathbb{N}$ is
large enough then there exists an expander family $\left\{ H_{n}\right\} _{n\in\mathbb{N}}$
of $d$-regular graphs such that each $H_{n+1}$ is a $2$-sheeted
cover of $H_{n}$. Denote the covering maps by $f_{n}:H_{n+1}\rightarrow H_{n}$.
By prefixing this sequence with a graph $H_{1}$ on a single vertex
(incrementing all other indices for consistency, and letting $f_{1}:H_{2}\rightarrow H_{1}$
be the constant map) we obtain a hierarchical family $\mathcal{H}$
of bounded degree expanders. The sequence of mapping telescopes $\left\{ T_{N}\left(\mathcal{H}\right)\right\} _{N\in\mathbb{N}}$
is the required family of $2$-complexes by \Cref{claim:telescope_collapse}
and \Cref{prop:telescope_expansion}.
\end{proof}

\bibliographystyle{plain}
\bibliography{spheres}

\begin{thebibliography}{10}

\bibitem{flip_graphs_of_bounded_degree_triangulations}
Oswin Aichholzer, Thomas Hackl, David Orden, Pedro Ramos, G{\"u}nter Rote,
  Andr{\'e} Schulz, and Bettina Speckmann.
\newblock Flip graphs of bounded degree triangulations.
\newblock {\em Graphs and Combinatorics}, 29:1577--1593, 2013.

\bibitem{Armstrong_extending_triangulations}
Mark~Anthony Armstrong.
\newblock Extending triangulations.
\newblock {\em Proceedings of the American Mathematical Society},
  18(4):701--704, 1967.

\bibitem{Arzhantseva_origami}
Goulnara Arzhantseva, Dawid Kielak, Tim de~Laat, and Damian Sawicki.
\newblock Spectral gap and origami expanders.
\newblock {\em arXiv preprint arXiv:2112.11864}, 2021.

\bibitem{Bilu_Linial}
Yonatan Bilu and Nathan Linial.
\newblock Lifts, discrepancy and nearly optimal spectral gap.
\newblock {\em Combinatorica}, 26(5):495--519, 2006.

\bibitem{Ceballos_Doolittle}
Cesar Ceballos and Joseph Doolittle.
\newblock Subword complexes and {K}alai's conjecture on reconstruction of
  spheres.
\newblock {\em arXiv preprint arXiv:2206.15461}, 2022.

\bibitem{finney_barycentric_insufficiency}
Ross~L Finney.
\newblock The insufficiency of barycentric subdivision.
\newblock {\em Michigan Mathematical Journal}, 12(3):263--272, 1965.

\bibitem{gromov_spaces_and_questions}
Misha Gromov.
\newblock Spaces and questions.
\newblock In {\em Visions in mathematics}, pages 118--161. Springer, 2010.

\bibitem{Hudson_PLTop}
John~FP Hudson.
\newblock {\em Piecewise linear topology}.
\newblock WA Benjamin, 1969.

\bibitem{Kalai_f_vector_theory}
Gil Kalai.
\newblock The diameter of graphs of convex polytopes and f-vector theory.
\newblock {\em DIMACS Series in Discrete Mathematics and Theoretical Computer
  Science}, pages 387--412, 1991.

\bibitem{Kalai_skeletons_and_paths}
Gil Kalai.
\newblock Polytope skeletons and paths.
\newblock In {\em Handbook of Discrete and Computational Geometry}, pages
  505--532. Chapman and Hall/CRC, 2017.

\bibitem{Lickorish_simplicial_moves}
William Bernard~Raymond Lickorish.
\newblock Simplicial moves on complexes and manifolds.
\newblock {\em Geometry and Topology Monographs}, 2(299-320):314, 1999.

\bibitem{simple_polytopes_without_small_separators2}
Lauri Loiskekoski and G{\"u}nter~M Ziegler.
\newblock Simple polytopes without small separators, {II}: {T}hurston's bound.
\newblock {\em Israel Journal of Mathematics}, 228(1):293--303, 2018.

\bibitem{Milnor_killing_homotopy_groups}
John Milnor.
\newblock A procedure for killing homotopy groups of differentiable manifolds.
\newblock In {\em Proceedings of Symposia in Pure Mathematics}, pages 39--55.
  American Mathematical Society, 1961.

\bibitem{Nevo_Santos_Wilson}
Eran Nevo, Francisco Santos, and Stedman Wilson.
\newblock Many triangulated odd-dimensional spheres.
\newblock {\em Mathematische Annalen}, 364(3):737--762, 2016.

\bibitem{RS_PLTop}
Colin~P Rourke and Brian~Joseph Sanderson.
\newblock {\em Introduction to piecewise-linear topology}.
\newblock Springer Science \& Business Media, 2012.

\end{thebibliography}

\end{document}